  \newcommand{\C}{\ensuremath{\mathbb{C}}}%
  \newcommand{\R}{\ensuremath{\mathbb{R}}}%
  \newcommand{\N}{\ensuremath{\mathbb{N}}}%
  \newcommand{\Z}{\ensuremath{\mathbb{Z}}}%
  \newcommand{\cH}{\ensuremath{\mathcal{H}}}%
  \newcommand{\tr}[1]{\ensuremath{\prescript{t}{}{#1}}}%
  \newcommand{\injtens}{\ensuremath{\otimes_\epsilon}}%
  \newcommand{\SL}{\ensuremath{\mathrm{SL}}}%
  \newcommand{\SO}{\ensuremath{\mathrm{SO}}}%
  \newcommand{\sphere}{\ensuremath{{\mathbb{S}^2}}}%
\newcommand\diag{\ensuremath{\textrm{diag}}}
\newtheorem{thm}{Theorem}[section]
\newtheorem{lemma}[thm]{Lemma}
\newtheorem{corollary}[thm]{Corollary}
\newtheorem{prop}[thm]{Proposition}
\theoremstyle{remark}
\newtheorem{rem}[thm]{Remark}
\theoremstyle{definition}
\newtheorem{dfn}[thm]{Definition}
\author{Mikael de la Salle}
\thanks{Research partially supported by the ANR projects NEUMANN and OSQPI.}
\title[Strong Banach (T) for $\SL(3,\R)$]{Towards Strong Banach property (T) for $\SL(3,\R)$}
\begin{document}
\DeclareGraphicsRule{*}{mps}{*}{} 
 \maketitle
\begin{abstract} We prove that $\SL(3,\R)$ has Strong Banach property (T) in Lafforgue's sense with respect to the Banach spaces that are $\theta>0$ interpolation spaces (for the complex interpolation method) between an arbitrary Banach space and a Banach space with sufficiently good type and cotype. 
As a consequence, every action of $\SL(3,\R)$ or its lattices by affine isometries on such a Banach space $X$ has a fixed point, and the expanders contructed from $\SL(3,\Z)$ do not admit a coarse embedding into $X$. We also prove a quantitative decay of matrix coefficients (Howe-Moore property) for representations with small exponential growth of $\SL(3,\R)$ on $X$.

This class of Banach spaces contains many superreflexive spaces and some nonreflexive spaces as well. We see no obstruction for this class to be equal to all spaces with nontrivial type.
\end{abstract}

\section{Introduction}

Kazhdan's property (T) for a topological group $G$ is a rigidity property for unitary representations of $G$. We refer to \cite{MR2415834} for more information. In recent years, U.~Bader, T.~Gelander, A.~Furman and N.~Monod \cite{MR2316269} on the one hand and V.~Lafforgue \cite{MR2423763} on the other hand independently discovered some rigidity property for actions on Banach spaces. See \cite[Section 4]{puschnigg} and \cite{nowak} for recent surveys. Our main results, Theorem \ref{thm=T_renforce_Banachique_SL3_vague} and \ref{thm=quantitative_howe_moore_vague} below, deal with Lafforgue's approach for the group $\SL(3,\R)$.

In \cite{MR2316269}, given a Banach space $X$, a property (T$_X$) was introduced in terms of (almost) invariant vectors for isometric representations on $X$, as well as a fixed point property (F$_X$) for affine isometric actions on $X$. $(\textrm{F}_X)$ implies (T$_X$), but the converse does not hold in general. The main class of spaces for which these properties were studied are $L^p$-spaces, and their subspaces/quotients (these results were generalized to noncommutative $L^p$ spaces in \cite{MR2957217}, see \cite{bekkaolivier} for a study of (T$_{\ell^p}$)). Among the results, (T$_X$) was shown to be equivalent to (T) for $X=L^p([0,1])$ whereas there are property (T) groups (for example hyperbolic groups) that do not satisfy (F$_{L^p}$) for $p$ large enough. On the opposite, higher rank algebraic groups over local fields and their lattices have property (F$_{L^p}$) for every $p \in (1,\infty)$. Bader--Furman--Gelander--Monod conjecture (\cite[Conjecture 1.6]{MR2316269}) that, for a higher rank group or its lattices, (F$_X$) holds for every superreflexive Banach space. By \cite[Proposition 8.8]{MR2316269} it is enough to prove this conjecture for higher rank groups, so it will automatically hold for their lattices.

\subsection{Strong property (T)}
In \cite{MR2423763} a stronger property, called Strong Banach property (T) was introduced. As is well-known, a group $G$ has property (T) if and only if its full $C^*$-algebra contains a projection $P$ such that, for every unitary representation $\pi$ of $G$, $\pi(P)$ is the orthogonal projection on the invariant vectors of $\pi$. In Lafforgue's Strong Banach property (T), the same definition is chosen, but for a larger class of representations~: on the one hand the representations are no longer assumed to be isometric but only of small exponential growth; on the other hand the actions do not need to be on Hilbert spaces but more generally on some Banach spaces. The word Strong refers to the first property (without it one gets just Banach property (T) as in \cite[D\'efinition 0.4]{MR2574023}), and Banach to the second. Here is a small adaptation of the original definition~: $G$ has Strong Banach (T) in the sense of \cite{MR2574023} if it has Strong Banach property (T) in the sense of the following definition, with respect to the class $\mathcal E$ of Banach spaces with nontrivial Rademacher type (see the discussion of the equivalence in subsection \ref{subsection=strongT}, and see Definition \ref{def=type_cotype} for the definitions of type and cotype). A length function on $G$ is a function $\ell:G \to \R_+$ that is continuous (or merely bounded on compacts) such that $\ell(g^{-1})= \ell(g)$ and $\ell(g_1 g_2)\leq \ell(g_1)+\ell(g_2)$ for all $g,g_1,g_2 \in G$. 

\begin{dfn}\label{def=strongT} A locally compact group $G$ has Strong Banach property (T) with respect to a class of Banach spaces $\mathcal E$ (abbreviated (T$^{Strong}_{\mathcal E}$)) if for every\footnote{it is in fact enough to consider only one length function, for example the word-length function associated to a symmetric compact generating set of $G$.} length function $\ell$ on $G$ there is a sequence of compactly supported symmetric Borel measures $m_n$ on $G$ such that, for every Banach space $X$ in $\mathcal E$, there is a constant $t>0$ such that the following holds. For every strongly continuous representation $\pi$ of $G$ on $X$ satisfying $\|\pi(g)\|_{B(X)} \leq L e^{t \ell(g)}$ for some $L\in\R_+$, $\pi(m_n)$ converges\footnote{See \eqref{eq=definition_of_pi(m)} for the definition of $\pi(m_n)$} in the norm topology of $B(X)$ to a projection on the $\pi(G)$-invariant vectors of $X$.
\end{dfn}

Strong Banach property (T) has striking consequences~: the first is that if $X$ is a Banach space and $G$ has Strong Banach property (T) with respect to $X\oplus \C$, then it has the fixed point property $(\textrm{F}_X)$ (because every affine isometric action can be realized as the restriction to the affine hyperplace $X \times \{1\}$ of a linear representation with at most linear growth, see the proof of \cite[Proposition 5.6]{MR2574023}). Also, if $G$ has (Strong) Banach property (T) with respect to $L^2(\Omega;X)$ for all measure spaces and $\Gamma$ is a residually finite lattice in $G$, then the expanders constructed from $\Gamma$ and a finite generating set of $\Gamma$ do not coarsely embed in $X$. See the proofs of \cite[Proposition 4.5, Th\'eor\`eme 5.1]{MR2423763}. Also, (Strong) Banach (T) is in general strictly stronger that the notion of \cite{MR2316269}, since a (non compact) locally compact group cannot have Strong Banach (T) (and not even Banach space (T) in the sense of \cite[D\'efinition 0.4]{MR2574023}) with respect to $L^1(G)$, whereas every (T) group has (F$_{L^1}$) by \cite{MR2929085}.

Lafforgue proved that hyperbolic groups do not satisfy Strong Banach (T) with respect to Hilbert spaces (T$^{Strong}_{Hilb}$), but for a local field $F$, $\SL(3,F)$ has (T$^{Strong}_{Hilb}$). In the case when $F$ is non-Archimedean, it was proved in \cite{MR2423763} and \cite{MR2574023} (see also \cite{liaosurvey}) that $\SL(3,F)$ satisfies Strong Banach property (T) with respect to the class of all Banach spaces of nontrivial (Rademacher) type, which is essentially the largest class of Banach spaces for which strong (T) could hold (for a non compact group). We refer to Definition \ref{def=type_cotype} for the definitions of type and cotype. Banach Strong (T) with respect to spaces of nontrivial type was recently extended to all higher rank algebraic groups over non-Archimedean local fields in \cite{liao}. This proves the conjecture in \cite{MR2316269} in the non-Archimedean case, since every superreflexive Banach space has nontrivial type by \cite{MR0394135}. See also \cite{vincentremarque} for related rigidity results.

For $\SL(3,\R)$, Lafforgue proved Strong (T) with respect to Hilbert spaces. It was clear to him and to Pisier that the same proof led to Strong Banach (T) with respect to $\theta$-Hilbertian spaces (see \S \ref{subsection=thetaHilb} for definition and details). This gives another proof of the fixed point property of $\SL(3,\R)$ or $\SL(3,\Z)$ on $L^p$ spaces \cite{MR2316269} and noncommutative $L^p$ spaces \cite{MR2957217}. However, this observation does not tell anything new regarding expanders coming from $\SL(3,\Z)$, since Pisier \cite{MR2732331} observed that a $\theta$-Hilbertian Banach space does not coarsely contain any family of expanders. The aim of the present work is to extend this result to some larger class $\mathcal E_4$ of Banach spaces. The classes $\mathcal E_r$ for $2<r<\infty$ are precisely defined in section \ref{section=main_part}. We just say here that $\mathcal E_r$ contains all spaces with sufficiently good type $p$ and cotype $q$ (namely $1/p-1/q<1/r$), and all $\theta$-Hilbertian spaces for $\theta>0$. These classes remain mysterious. We neither know whether the $\mathcal E_r$ depends on $r$, nor whether $\mathcal E_r$ contains all superreflexive spaces. For each $2<r<\infty$, $\mathcal E_r$ is made of spaces with type $>1$, and we see no obstruction for all $\mathcal E_r$ to be equal to the class of Banach spaces with type $>1$. By \cite{MR907695} each $\mathcal E_r$ contains some nonreflexive spaces.

\begin{thm}\label{thm=T_renforce_Banachique_SL3_vague} $\SL(3,\R)$ has Strong Banach property (T) with respect to $\mathcal E_4$.
\end{thm}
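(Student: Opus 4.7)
The plan is to follow V.~Lafforgue's blueprint for Strong (T) of $\SL(3,\R)$ with Hilbert coefficients, and to replace the orthogonality arguments on the Hilbert side by vector-valued inequalities built into the definition of $\mathcal{E}_4$. Let $K = \SO(3)$ and use the Cartan decomposition $G = KAK$. I would take the measures $m_n$ to be (essentially) the same bi-$K$-invariant ones as in the Hilbert case: explicit normalized combinations of Haar averages over the double cosets $Ka_nK$ for a suitable sequence $(a_n)$ travelling inside the positive Weyl chamber. Since each $m_n$ is bi-$K$-invariant, and since the $K$-averaging $P_K := \int_K \pi(k)\,dk$ is a well-defined bounded projection on $X$ (by compactness of $K$ and the small exponential growth of $\pi$), we have $\pi(m_n) = P_K\pi(m_n)P_K$, so convergence of $\pi(m_n)$ in $B(X)$ reduces to convergence of its restriction to the closed $K$-invariant subspace $X^K := P_K X$.

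The crux is then to show that $\pi(m_n)|_{X^K}$ converges in operator norm to the projection onto $G$-invariants, with an error that defeats the exponential factor $e^{t\ell(g)}$ allowed by Definition \ref{def=strongT}. In Lafforgue's Hilbert-space treatment one expands this restriction against $K$-finite matrix coefficients and exploits $L^2$-orthogonality of $K$-types on the Furstenberg boundary $K/M$ (with $M$ the centralizer of $A$ in $K$). To handle $X \in \mathcal{E}_4$ I would replace $L^2$-orthogonality by the vector-valued Khintchine-type inequality that is exactly the information packaged into $\mathcal{E}_4$: either directly from the type $p$ and cotype $q$ hypothesis with $1/p - 1/q < 1/4$, or from the $\theta$-Hilbertian structure via complex interpolation between a Hilbert-like space and an arbitrary Banach space. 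The exponent $4$ appears as the threshold dictated by how many times a type/cotype factor must be paid when one unwraps $\pi(m_n \star m_n)$ along $KAK$ and integrates over the two independent $K$-variables that govern the angular dependence at the boundary.

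The main obstacle is to carry out this vector-valued estimate with constants sharp enough to beat the exponential growth $e^{t\ell(g)}$ of $\pi$ for some positive $t$. On a Hilbert space, orthogonality together with explicit bounds for $\SL(3,\R)$-spherical functions yields genuine polynomial decay in $n$; on a general $X \in \mathcal{E}_4$ each use of a Khintchine-type inequality introduces a multiplicative constant and typically shaves something off the exponent, leaving little slack. The Archimedean setting makes this harder than the non-Archimedean one treated in \cite{MR2423763, MR2574023}: the Bruhat--Tits building of $\SL(3,\Q_p)$ admits clean combinatorial identities that have no direct analogue for the Riemannian symmetric space of $\SL(3,\R)$, where one must instead work with analytic asymptotics of spherical functions and Harish-Chandra's $c$-function. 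I expect the hardest part to be extracting enough decay from these asymptotics while maintaining uniformity in $X$ over the whole class $\mathcal{E}_4$; calibrating the $\ell^4$-Khintchine step against the rate produced by the spherical-function analysis will be the pressure point of the whole argument.
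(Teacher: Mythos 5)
Your proposal does not close the two places where the real work lies, and at both it heads in a direction different from what actually makes the theorem true. First, the analytic core. You propose to expand along the Furstenberg boundary $K/M$, use asymptotics of spherical functions and the Harish-Chandra $c$-function, and substitute vector-valued Khintchine-type inequalities for $L^2$-orthogonality; you yourself flag the resulting estimate as the unresolved ``pressure point,'' so as it stands this is a plan, not a proof. The actual mechanism is entirely different and lives on the compact pair $(\SO(3),\SO(2))$: one needs the single inequality \eqref{eq=assumption_on_X}, i.e. H\"older continuity in $\delta$ of $(T_0-T_\delta)\otimes\mathrm{Id}_X$ on $L^2(K;X)$ (Theorem \ref{thm=main_result_Banach}), and this is obtained from the fact that $T_\delta-T_0$ lies in the Schatten class $S^p$ for $p>4$ with norm $O(|\delta|^{1/2-2/p})$ (Lemma \ref{lemma=T_delta_in_Sp}), combined with the bound $\|T_X\|\lesssim T_p(X)C_q(X)\|T\|_{S^r}$ when $1/p-1/q<1/r$ (Proposition \ref{prop=lien_entre_typecotype_Schatten}, via the K\"onig--Retherford--Tomczak-Jaegermann estimate on $e_n(X)$). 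So the exponent $4$ is a property of the Gelfand pair $(\SO(3),\SO(2))$ (the Legendre eigenvalues $P_n(\delta)$ with multiplicity $2n+1$), not of ``how many Khintchine factors are paid along $KAK$.'' Moreover, your Khintchine/type--cotype step could at best treat the generating spaces of $\mathcal E_4$: the class is the closure under subspaces, quotients, ultrapowers and $\theta>0$ interpolation \emph{with an arbitrary Banach space}, so its members need not have good type and cotype at all. What survives these operations is the quantity $\|T_X\|$ for a fixed operator $T$, via Lemma \ref{lemma=stability_under_subquotients_ultrapr} and the regular bound $\|T_0-T_\delta\|_{B_r(L^2)}\le 2$ at the trivial endpoint of the interpolation; your proposal never identifies a quantity with this stability, and without it uniformity over $\mathcal E_4$ is lost.

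Second, even granting norm convergence of $\pi(m_n)$, your reduction to $X^K$ silently skips the step that is genuinely new in the Banach setting: identifying the limit as a projection onto the $\pi(G)$-invariant vectors. On a Hilbert space (or a strictly convex one) this is easy; for a general $X\in\mathcal E_4$ the paper must prove decay of coefficients $\langle\pi(g)\xi,\eta\rangle$ with $\xi$ $K$-invariant but $\eta$ of \emph{nontrivial} $K$-type (estimate \eqref{eq=howe_moore_spherical}), which requires H\"older control of non-$U$-biinvariant, $U$-equivariant coefficients (Proposition \ref{prop=nonspherical_Holder_general}) together with the two-parameter embeddings $x\mapsto D_\beta x D_\alpha$ and the Banach Peter--Weyl decomposition of $X$ into $K$-types. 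Your proposal addresses only the bi-$K$-invariant block and gives no route to the reverse inclusion $P(X)\subset X^G$, so even with the missing quantitative estimate supplied, the argument would not yet yield Strong Banach (T) in the sense of Definition \ref{def=strongT}; compare Theorem \ref{thm=T_renforce_Banachique_SL3_precis} and its proof, where this identification consumes the entire second step.
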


As we will explain at the end of this introduction, our original contribution to Theorem \ref{thm=T_renforce_Banachique_SL3_vague} is a result on the representations of $\SO(3)$ on spaces in $\mathcal E_4$ (Theorem \ref{thm=main_result_Banach}) relying on a computation (Lemma \ref{lemma=T_delta_in_Sp}) that was made in \cite{MR2838352}.


By \cite[Proposition 8.8]{MR2316269}, by the proof of \cite[Proposition 5.6]{MR2574023} and by the fact that $L^2(\Omega;X)$ belongs to $\mathcal E_4$ whenever $X$ does, this implies
\begin{corollary}
$\SL(3,\R)$ and its lattices have $(\textrm{F}_X)$ for every $X$ in $\mathcal E_4$.
\end{corollary}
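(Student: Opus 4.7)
The plan is to deduce the corollary directly from Theorem \ref{thm=T_renforce_Banachique_SL3_vague} by the standard linearization of affine isometric actions, combined with an induction argument to pass from the group to its lattices. The two implicit ingredients, both coming from the definition of $\mathcal E_r$ (not reproduced in this excerpt), are that $\mathcal E_4$ is stable under the operations $X \mapsto X \oplus \C$ and $X \mapsto L^2(\Omega; X)$: the second is stated explicitly just before the corollary, and the first should be immediate from the fact that $\mathcal E_r$ is defined via type/cotype or $\theta$-Hilbertian interpolation, neither of which is affected by a finite-dimensional summand.

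For $G = \SL(3,\R)$: given an affine isometric action $\alpha(g) v = \pi_0(g) v + b(g)$ on $X \in \mathcal E_4$, I would follow \cite[Proposition 5.6]{MR2574023} and embed $\alpha$ into the linear representation
\begin{equation*}
\pi(g)(v, \lambda) = (\pi_0(g) v + \lambda b(g), \lambda)
\end{equation*}
on $X \oplus \C$, with the cocycle identity $b(g_1 g_2) = \pi_0(g_1) b(g_2) + b(g_1)$ ensuring that $\pi$ is a homomorphism. Subadditivity of $g \mapsto \|b(g)\|$ with respect to the length function $\ell$ gives $\|b(g)\| \leq C \ell(g)$, so $\|\pi(g)\| \leq 1 + C\ell(g) \leq L_t e^{t \ell(g)}$ for every $t > 0$. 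Choosing $t$ below the threshold supplied by Theorem \ref{thm=T_renforce_Banachique_SL3_vague} applied to $X \oplus \C \in \mathcal E_4$, the sequence $\pi(m_n)$ converges in operator norm to a projection $P$ onto the $\pi(G)$-invariant vectors. Since $\pi$ fixes the $\C$-factor and $m_n$ is (in the usual convention) a probability measure, $P(0, 1) = (v_0, 1)$ for some $v_0 \in X$, and its $\pi$-invariance is precisely the identity $\alpha(g) v_0 = v_0$.

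For a lattice $\Gamma \subset G$, I would invoke \cite[Proposition 8.8]{MR2316269}: any affine isometric $\Gamma$-action on $X$ induces an affine isometric $G$-action on an $L^2$-induced space modelled on $L^2(G/\Gamma; X)$, which remains in $\mathcal E_4$ by the stability mentioned above; a $G$-fixed point there restricts to a $\Gamma$-fixed point in $X$. The proof is therefore mechanical, and the only point that could require a moment of thought is the stability of $\mathcal E_4$ under the two operations above, which is where the careful choice of class $\mathcal E_r$ from the main body of the paper pays off.
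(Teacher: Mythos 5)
Your proof follows the same route as the paper's, which is itself essentially a citation of \cite[Proposition 8.8]{MR2316269}, the linearization from the proof of \cite[Proposition 5.6]{MR2574023}, and the stability of $\mathcal E_4$ under $X\mapsto L^2(\Omega;X)$. Your elaboration of the linearization (sub-exponential growth of $\|\pi(g)\|$, so any $t>0$ works; $P(0,1)=(v_0,1)$ giving the fixed point) and of the $L^2$-induction to lattices is correct.

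The one soft spot is your justification that $X\oplus\C\in\mathcal E_4$: the reasoning "$\mathcal E_r$ is defined via type/cotype\dots neither of which is affected by a finite-dimensional summand" does not apply directly, since a general $X\in\mathcal E_4$ need not itself have type $p$ and cotype $q$ with $1/p-1/q<1/4$ — it is only built from such spaces by subspaces, quotients, ultrapowers, and interpolation. The correct and short argument is the one the paper implicitly relies on: $X\oplus\C$ is isomorphic to a closed subspace of $X\oplus_2 X = L^2(\{1,2\};X)$, which is in $\mathcal E_4$ by the $L^2$-stability already invoked, and $\mathcal E_4$ is closed under subspaces and (via the degenerate interpolation couple $X_0=X_1$) under isomorphism. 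With that repaired, the proposal matches the paper.
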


By the proof of \cite[Th\'eor\`eme 5.1]{MR2423763} Theorem \ref{thm=T_renforce_Banachique_SL3_vague} implies
\begin{corollary}
Let $S$ be a finite generating set of $\SL(3,\Z)$. The expanders \[(\SL(3,\Z/n\Z),\pi_n(S))\] do not coarsely embed in Banach spaces in $\mathcal E_4$ where $\pi_n\colon \SL(3,\Z) \to \SL(3,\Z/n\Z)$.
\end{corollary}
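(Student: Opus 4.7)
The plan, following Lafforgue \cite[proof of Th\'eor\`eme 5.1]{MR2423763}, is to argue by contradiction. Suppose there exists a family of coarse embeddings $f_n\colon V_n \to X$ of $V_n = (\SL(3,\Z/n\Z), \pi_n(S))$, with common moduli $\rho_- \to \infty$ and finite $\rho_+$. Recenter $f_n$ (subtract the average) and view the result $F_n$ as an element of $Y_n := L^2(V_n; X)$. The natural isometric action $\sigma_n$ of $\SL(3,\Z)$ on $Y_n$ through $\pi_n$ has as invariants the constant functions $V_n \to X$, and $F_n$ projects to $0$ under averaging. By the stability of $\mathcal E_4$ under $X \mapsto L^2(\Omega; X)$ (built into the definition in \S\ref{section=main_part}), we have $Y_n \in \mathcal E_4$.

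Next, transfer Strong Banach property (T) from $\SL(3,\R)$ to its lattice $\SL(3,\Z)$ via the induced representation construction: an isometric $\SL(3,\Z)$-representation on $Y \in \mathcal E_4$ produces an isometric $\SL(3,\R)$-representation on the induced space $L^2(\SL(3,\R)/\SL(3,\Z); Y)$, which again lies in $\mathcal E_4$ and whose invariants coincide canonically with those of $Y$ thanks to the finite volume of $\SL(3,\R)/\SL(3,\Z)$. Applying Theorem \ref{thm=T_renforce_Banachique_SL3_vague} and integrating over a fundamental domain yields finitely supported symmetric signed measures $(\mu_k)$ on $\SL(3,\Z)$, supported in $\{\gamma : |\gamma|_S \leq R_k\}$ with total masses $\mu_k(\SL(3,\Z)) \to 1$, and a sequence $\epsilon_k \to 0$, such that $\|\sigma_n(\mu_k) F_n\|_{Y_n} \leq \epsilon_k \|F_n\|_{Y_n}$ holds uniformly in $n$ (using that $F_n$ has zero invariant part).

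Writing $\mu_k(\SL(3,\Z)) F_n - \sigma_n(\mu_k) F_n = \sum_\gamma \mu_k(\{\gamma\})(F_n - \sigma_n(\gamma) F_n)$, this gives for $k$ large enough the Poincar\'e-type inequality
\[
\|F_n\|_{Y_n} \;\leq\; C_k \max_{|\gamma|_S \leq R_k} \|\sigma_n(\gamma) F_n - F_n\|_{Y_n}.
\]
The right-hand side is $\leq C_k \rho_+(R_k)$, uniformly in $n$, by the upper coarse modulus of $f_n$. For the lower bound, the triangle inequality gives
\[
\|F_n\|_{Y_n}^2 \;\geq\; \frac{1}{4|V_n|^2}\sum_{u,v \in V_n} \|f_n(u) - f_n(v)\|_X^2 \;\geq\; c\,\rho_-(D_n)^2,
\]
where $D_n \to \infty$ is a typical pairwise distance in the bounded-degree expander $V_n$ and $c>0$ is absolute. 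Since $\rho_-(D_n) \to \infty$, $\|F_n\|_{Y_n} \to \infty$, contradicting the Poincar\'e inequality.

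The main obstacle is the second step, namely the transfer of Strong Banach (T) from $\SL(3,\R)$ to $\SL(3,\Z)$ with uniform control on word-length supports and convergence rates. This is precisely Lafforgue's induction argument \cite[Section 5]{MR2423763}, and the only input needed here beyond Theorem \ref{thm=T_renforce_Banachique_SL3_vague} is the $L^2(\Omega; \cdot)$-stability of $\mathcal E_4$, which is embedded in the definition of the class.
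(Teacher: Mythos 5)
Your overall architecture (argue by contradiction, recentre the embedding into a zero-average vector of $L^2(V_n;X)$, derive a Poincar\'e-type inequality from a spectral-gap statement, and get the lower bound from the lower modulus together with the logarithmic diameter of bounded-degree expanders) is the right one and matches the scheme of Lafforgue's Th\'eor\`eme 5.1 \cite{MR2423763}, which is all the paper invokes. The genuine gap is in the step you yourself flag as the main obstacle, and it is not repaired by what you cite. $\SL(3,\Z)$ is a \emph{non-uniform} lattice in $\SL(3,\R)$: a fundamental domain $D$ is unbounded, so ``integrating over a fundamental domain'' the compactly supported measures furnished by Theorem \ref{thm=T_renforce_Banachique_SL3_vague} does not produce finitely supported measures on $\SL(3,\Z)$, let alone measures supported in word-balls $\{\gamma : |\gamma|_S\le R_k\}$: the return cocycle $c(g,d)\in\Gamma$ defined by $gd\in D\,c(g,d)$ has unbounded word length as $d$ runs into the cusps, even for $g$ in a fixed compact set. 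Consequently your measures $\mu_k$ with the stated properties do not exist by this construction, and your closing claim that ``the only input needed beyond Theorem \ref{thm=T_renforce_Banachique_SL3_vague} is the $L^2(\Omega;\cdot)$-stability of $\mathcal E_4$'' is not correct: one needs in addition an integrability estimate for the word length of this cocycle over $D$ (equivalently, square-integrability of the lattice, which for $\SL(3,\Z)$ rests on the comparison of the word metric with the ambient metric in the spirit of Lubotzky--Mozes--Raghunathan together with the exponential volume decay in Siegel sets), or a restructuring of the argument.

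The route the paper actually points to avoids manufacturing any ``strong (T) for the lattice'' with finitely supported measures: one stays with representations of $G=\SL(3,\R)$. The $G$-representation induced from your $\Gamma$-representation on $L^2(V_n;X)$ is, by induction in stages, the isometric representation on $L^2(G/\Gamma_n;X)$; this is a space of the form $L^2(\Omega;X)$, which satisfies \eqref{eq=assumption_on_X} with the \emph{same} constants $C,s$ as $X$ (Fubini), so Theorem \ref{thm=T_renforce_Banachique_SL3_precis} applies with the same compactly supported measures and the same rate for every $n$ --- this also supplies the uniformity in $n$ that you assert without argument (alternatively one can use an $\ell^2$-direct-sum device as in \S\ref{subsection=strongT}). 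One then transplants the recentred coarse embedding to a vector of $L^2(G/\Gamma_n;X)$ via a fundamental domain and compares it with its image under the measures from Theorem \ref{thm=T_renforce_Banachique_SL3_precis}; the non-compactness of $D$ resurfaces in the bound on the displacement and is handled by the same integrability (taking $\rho_+$ affine, as one may for connected graphs). So the statement can indeed be proved along the lines you sketch, but not with only the ingredients you list: the treatment of the non-uniformity of $\SL(3,\Z)$ is a missing piece, and as written your second step fails.
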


In a work in progress with Tim de Laat we are extending the previous results to all connected higher rank simple Lie groups.

\subsection{Decay of matrix coefficients.} A locally compact group $G$ is said to have the Howe-Moore property if for every unitary representation $\pi$ of $G$, $\pi(g)$ converges in the weak operator topology to the orthogonal projection of the space of $G$-invariant vectors as $g$ tends to infinity. The Howe-Moore property does not imply (T) ($\SL(2,\R)$ has Howe-Moore), but a quantitative form of Howe-Moore does, and can lead to explicit Kazhdan constants \cite{MR1151617}, \cite{MR1905394}. The proof of Theorem \ref{thm=T_renforce_Banachique_SL3_vague} also relies on an explicit control of some of the matrix coefficients of the representations of $\SL(3,\R)$ on $X$. Pushing this method a bit further and restricting to representations without invariant vectors, one gets in Theorem \ref{thm=quantitative_howe_moore_vague} below an explicit decay of the matrix coefficients with respect to $\SO(3)$-finite vectors. This generalizes to Banach spaces and slowly growing representations the results from \cite{MR1151617}, \cite{MR1905394}. See Theorem \ref{thm=quantitative_howe_moore_precis} for an explicit form of $\varepsilon$. However, as Lafforgue pointed out to me (see Remark \ref{rem=nonoptimal}), for unitary representations these results do not lead to the optimal bound $\varepsilon$ obtained in \cite{MR1905394}. Let us also mention that Shalom proved the Howe-Moore property (without explicit decay) for isometric actions of higher rank groups on every superreflexive space (see \cite[Appendix 9]{MR2316269}).
\begin{thm}\label{thm=quantitative_howe_moore_vague}
Denote $G=\textrm{SL}(3,\R)$ and $K=\SO(3,\R)$ its maximal compact subgroup. Let $X$ be a space in $\mathcal E_4$ and $\pi$ a representation of $G$ on $X$ without invariant vectors and with small exponential growth. There is a function $\varepsilon \in C_0(G)$ such that for every $\xi\in X$, $\eta \in X^*$ of finite $K$-type\footnote{$\xi$ ($\eta$) is of finite $K$-type if the vector space generated by the $K$-orbit of $\xi$ (respectively $\eta$) is finite dimensional}
\begin{equation*}|\langle \pi(g) \xi,\eta\rangle| \leq C \varepsilon(g) \|\xi\| \|\eta\|\end{equation*}
for some constant $C$ depending only on the dimensions of the spaces $\mathrm{span}(\pi(K)\xi)$ and $\mathrm{span}(\tr{\pi}(K)\eta)$.
\end{thm}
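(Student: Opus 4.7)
The plan is to adapt the Hilbert-space quantitative Howe--Moore argument of \cite{MR1151617,MR1905394} for $\SL(3,\R)$ to the Banach setting, using the $\SO(3)$-representation estimates of Theorem \ref{thm=main_result_Banach} (and the computation of Lemma \ref{lemma=T_delta_in_Sp}) as the substitute for Hilbert-space orthogonality. I would first apply the $KAK$ decomposition $g = k_1(g)\,a(g)\,k_2(g)$ with $a(g)$ in a closed positive Weyl chamber $A^+$ of the diagonal torus, and rewrite
\begin{equation*}
\langle \pi(g)\xi, \eta\rangle = \langle \pi(a(g))\pi(k_2(g))\xi,\, \tr{\pi}(k_1(g)^{-1})\eta\rangle.
\end{equation*}
Because $\xi,\eta$ are $K$-finite, the vectors $\pi(k_2)\xi$ and $\tr{\pi}(k_1^{-1})\eta$ remain in the fixed finite-dimensional $K$-invariant subspaces $V := \mathrm{span}(\pi(K)\xi)$ and $W := \mathrm{span}(\tr{\pi}(K)\eta)$, with norms controlled by $\|\xi\|, \|\eta\|$ up to a constant coming from the small exponential growth of $\pi$ on the compact group $K$. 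Decomposing $V$ and $W$ into $\SO(3)$-isotypic components $V = \bigoplus_{\ell} V_\ell$, $W = \bigoplus_{\ell'} W_{\ell'}$ (finite sums), the isotypic projections are bounded in operator norm by a polynomial in the highest spins appearing, thanks to Theorem \ref{thm=main_result_Banach}; this polynomial cost is absorbed into the constant $C$ that is allowed to depend on $\dim V$ and $\dim W$.

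It thus suffices to prove a $C_0$-decay estimate for $|\langle \pi(a)\xi_\ell, \eta_{\ell'}\rangle|$ with $\xi_\ell \in V_\ell$, $\eta_{\ell'} \in W_{\ell'}$ of unit norm and $a \in A^+$. For this I would consider the $K$-averaged operator $\widetilde\pi(a) := \int_K \pi(k)\pi(a)\pi(k^{-1})\,dk$, which is $\pi(K)$-equivariant and hence, by a Banach Schur-type argument supplied by Theorem \ref{thm=main_result_Banach}, is proportional (up to an error polynomial in $\ell, \ell'$ times the ambient operator norms) to a scalar $c_{\ell,\ell'}(a)$ on each $\SO(3)$-isotypic block. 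The value of this scalar is controlled by the explicit spherical integrals of Lemma \ref{lemma=T_delta_in_Sp}; the no-invariant-vectors hypothesis, combined with the norm convergence $\pi(m_n) \to 0$ derived from Theorem \ref{thm=T_renforce_Banachique_SL3_vague}, forces $a \mapsto c_{\ell,\ell'}(a)$ to lie in $C_0(A^+)$. Left-right averaging against $\pi(k_2)\xi_\ell$ and $\tr{\pi}(k_1^{-1})\eta_{\ell'}$ then recovers the original matrix coefficient from the scalar-block estimate.

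The main obstacle is the gap between ``$K$-intertwining averages are scalar'' (exact in the Hilbert setting via Schur's lemma) and the merely approximate Banach-space analogue: one must ensure that the polynomial-in-$(\ell,\ell')$ constants arising from Theorem \ref{thm=main_result_Banach} do not interact badly with the $C_0$-decay in $a$, and that the hypothesis $\|\pi(g)\| \leq L e^{t\ell(g)}$ with $t$ small keeps the $K$-averaging and subsequent operator identities within the regime to which that theorem applies. A further delicate point is propagating the pointwise decay on $A^+$ back to a single $\varepsilon \in C_0(G)$ via the Cartan projection $g \mapsto a(g)$, which follows from the fact that this projection is proper on $\SL(3,\R)$.
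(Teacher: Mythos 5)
Your proposal does not reach the heart of the argument, and the step where it breaks is exactly the one the paper singles out as requiring a new idea. You isolate the estimate to $\langle \pi(a)\xi_\ell,\eta_{\ell'}\rangle$ with $a\in A^+$ and then try to finish by a ``Banach Schur-type argument'' that would make the $K$-conjugation average $\widetilde\pi(a)=\int_K\pi(k)\pi(a)\pi(k)^{-1}\,dk$ approximately scalar on each isotypic block. Two problems. First, Theorem \ref{thm=main_result_Banach} is an operator-norm estimate for $(T_0-T_\delta)\otimes\mathrm{Id}_X$ on $L^2(K;X)$; it is not, and does not imply, an approximate Schur lemma, nor does it bound isotypic projections (those are bounded by $d_V$ simply because $\|\chi_V\|_{L^1}\leq 1$). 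Second and more fundamentally, even the exact Schur lemma in the Hilbert case gives you nothing on the diagonal $\ell=\ell'$: a $K$-intertwiner $V_\ell\to V_\ell$ can be any scalar multiple of the identity, so there is no a priori smallness. This is precisely the case $V_1\cong V_2^*$ that the introduction flags as needing a new ingredient. Your scheme collapses to the ``immediate adaptation'' that works only when $\mathrm{Hom}_K(V_1,V_2^*)=0$, which is not enough.

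The paper's actual mechanism is quite different and has no Schur-type step at all. The decay comes from a H\"older-continuity estimate for $U$-\emph{equivariant} (not just $U$-biinvariant) matrix coefficients of $K$-representations (Proposition \ref{prop=nonspherical_Holder_forKU}, itself a consequence of \eqref{eq=assumption_on_X} via Propositions \ref{prop=all_reduces_to_regular} and \ref{prop=nonspherical_Holder_general}), combined with the combinatorics of the embeddings $x\mapsto D_\beta x D_\alpha$ of $(K,U)$ into $(G,K)$ (Lemmas \ref{lemma=image_of_j_alpha} and \ref{lemma=embedding2}). For $K$-biinvariant coefficients this gives a Cauchy/zigzag argument across the Weyl chamber $\Lambda$ (\S\ref{subsection=first_step}). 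For the non-$K$-invariant case, and in particular for $V_1\cong V_2^*$, the extra ingredient is Lemma \ref{lemma=A_almost_U_invariant}: by varying $\alpha$ in $[\gamma,7\gamma/6]$ and tracking the $\SO(2)$-parts of the $KAK$ decomposition of $D_{2\gamma-\alpha}x_{\delta}D_\alpha$, one shows that the operator $A_\gamma=q\circ\pi(\diag(e^\gamma,1,e^{-\gamma}))\circ\iota$ is almost $\widetilde U$-invariant (and, by the symmetry $\widetilde\theta$, almost $U$-invariant), whence small by Lemma \ref{eq=a_almost_UandU'_invariant} because $U$ and $\widetilde U$ generate $K$. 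Your proposal contains none of this and replaces it by an unavailable approximate-Schur step; moreover your appeal to Theorem \ref{thm=T_renforce_Banachique_SL3_vague} (norm convergence of $\pi(m_n)$) to get $C_0$-decay only supplies the decay of the $K$-biinvariant part, which still does not cover the hard isotypic case.
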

When $X$ is a Hilbert space, the case when $\xi$ is $K$-invariant and $\mathrm{span}(\tr{\pi}(K)\eta)$ has no invariant vector was proved in \cite{MR2423763}. The case when there is no nonzero $K$-equivariant map from $\mathrm{span}(\pi(K)\xi)$ to $\mathrm{span}(\tr\pi(K)\eta)$ is an immediate adaptation of the proof. The remaining case needs some new ingredient (see the discussion before \S \ref{subsection=statements}).

\bigskip 

To end this introduction let us give a brief account of Lafforgue's method. Many approaches to rigidity for actions of $\SL(3)$ used in a way or another $\SL(2)$, through relative property (T) (\cite{MR2316269}, \cite{MR1779896}), or through asymptotics of matrix coefficients (\cite{MR1151617}, \cite{MR1905394}). The method developed in \cite{MR2423763} for $\SL(3,F)$ over a local field $F$ is different. Let us focus on the real case. The maximal compact subgroup $K=\SO(3)$ plays a key role. Let us fix once and for all a distance $d$ on $\SO(3)$ coming from a biinvariant Riemanian metric. One way to understand\footnote{My understanding of this is also influenced by \cite{HaadL} where some computations from \cite{MR2423763} are interpreted in terms of Gelfand pairs} his approach is to cut it into two main ingredients, one analytical and one combinatorical~: the analytical part of the proof is the study of the harmonic analysis of the pair of maximal compact subgroups $K=\SO(3) \subset \SL(3,\R)$ and $\SO(2) \subset \SL(2,\R)$. The combinatorial part studies the combinatorics of the various ways to see the pair $(\SO(3),\SO(2))$ inside $\SL(3,\R)$. To illustrate in a simple setting this machinery, let us outline his proof that $\SL(3,\R)$ has property (T). It already gives a fair idea of the techniques involved. Take $\pi$ a unitary representation of $\SL(3,\R)$ on $\cH$. Recall that a matrix coefficient of a unitary representation $\pi$ of a group $G$ on a Hilbert space $\cH$ is a function $g \in G\mapsto \langle \pi(g) \xi,\eta\rangle$ for $\xi,\eta\in\cH$. We say the matrix coefficient is normalized if $\xi,\eta$ are unit vectors. For a subgroup $H$ of $G$ we say that a function $f$ on $G$ is $H$-biinvariant if $f(hgh')= f(g)$ for all $g \in G$ and $h,h' \in H$. The analytical part is the following fact~: every $U=\begin{pmatrix} 1 & 0\\ 0 & \SO(2)\end{pmatrix}$-biinvariant normalized coefficient of a unitary representation of $K$ is H\"older continuous with exponent $1/2$ on a neighbourhood of $x_0 = \begin{pmatrix} 0 & -1 & 0 \\ 1&0&0\\0&0&1\end{pmatrix}$ (and in fact on a neighbourhood of every element of $K$ except those that are block diagonal in the decomposition $\R^3 \simeq \R\oplus \R^2$), with uniform constant (see \eqref{eq=U-invariant_coeff_hoelder} for details). Now if $D \in \SL(3,\R)$ commutes with every element of $U$, and if $\xi,\eta \in \cH$ are $K$-invariant unit vectors, $x \in K \mapsto \langle \pi(DxD)\xi,\eta\rangle$ is a $U$-biinvariant normalized matrix coefficient of $\pi$, and hence for all $x \in K$
\begin{equation}\label{eq=in_the_proof_of_T} |\langle \pi(Dx_0D)\xi,\eta\rangle - \langle \pi(DxD)\xi,\eta\rangle| \leq C d(x,x_0)^{1/2}.\end{equation}
If we agree to say that the price to pay for jumping from $D x_0 D$ to $D x D$ (or from $D x D$ to $D x_0 D$) is $C d(x,x_0)^{1/2}$, the combinatorial part of the proof reads as follows~: Lafforgue is able to explore the whole group $\SL(3,\R)$ (or rather the space of cosets $K \backslash \SL(3,\R)/K$), in such a way that any pair $g,g'\in \SL(3,\R)$ \emph{far from the identity} can be linked together by a series of jumps of the form just described, with total cost \emph{very small} (see subsection \ref{subsection=first_step} for details). Together with \eqref{eq=in_the_proof_of_T}, this implies that $g \mapsto \langle \pi(g)\xi,\eta\rangle$ satisfies the Cauchy criterion (uniformly in $\xi,\eta$), and hence that it converges as $g$ escapes to infinity in $G$. Hence, $\int_{K\times K} \pi(kgk) dk$ has a limit, say $P$, as $g \to \infty$ in the norm topology of $B(\cH)$. The last (easy) step is to identify $P$ as the orthogonal projection on the space of $G$-invariant vectors (see \eqref{eq=pi(KgP)=P} and the following discussion for details). In particular if the representation $\pi$ we started with had no invariant vectors, $P=0$, and $\| \int_{K\times K} \pi(kgk) dk\|_{B(\cH)} \leq 1/2$ for some $g \in G$. This clearly implies $\pi$ does not almost have invariant vectors, and proves that $\SL(3,\R)$ has property (T). The proof of Strong property (T) uses similar ideas, but is significantly more involved since one also has to deal with non $K$-biinvariant coefficients to replace the last step that is no longer easy at all.

The fact that all the analysis is done on the level of compact groups is what allows to consider also representations with small growth, but it also has a remarkable consequence in harmonic analysis/operator algebra (\cite{MR2838352}, \cite{HaadL}, \cite{dLaat}). Indeed, since for a compact group $K$, the space of completely bounded multipliers of the Fourier algebra\footnote{for a locally compact group $G$, $A(G)$ is the space of matrix coefficients of the representation $\lambda$ of $G$ on $L^2(G)$ by left translation, with norm defined by $\|\psi\|_{A(G)} \leq 1$ if and only if there are unit vectors $\xi,\eta \in L^2(G)$ such that $\psi(x) = \langle \lambda(x) \xi,\eta\rangle$. A completely bounded multiplier of $A(G)$ is a function $f$ on $G$ such that $(g,k) \mapsto f(g)\psi(g,k) \in A(G\times K)$ for all $\psi \in A(G\times K)$ and all compact groups $K$.} $A(K)$ coincides with $A(K)$, the H\"older continuity of $U$-biinvariant matrix coefficients of $K$ can be rephrased as H\"older continuity of $U$-biinvariant completely bounded multipliers of the Fourier algebra $A(K)$, and the proof sketched above gives that not only the $C_0$ $K$-biinvariant matrix coefficients of unitary representations of $\SL(3,\R)$, but also the $C_0$ $K$-biinvariant completely bounded multipliers of $A(\SL(3,\R))$ have an explicit decay at infinity. Haagerup and de Laat \cite{HaadL} were able to deduce from this, with a very short proof, that $\SL(3,\R)$ does not have the Approximation Property. This was already known as a consequence of \cite{MR2838352}, where we also used Lafforgue's machinery to prove that the non-commutative $L^p$ spaces of the von Neumann algebras of lattices in $\SL(3,\R)$ fail the completely bounded approximation property for all $p \in (4,\infty]$. If one looks at the proofs, one remarks that the $4$ here is the same $4$ as in the definition of the class $\mathcal E_4$.

If one tries to adapt these techniques to representations of $\SL(3,\R)$ on a Banach space $X$, the difficulty lies in the analytical part. Indeed, by its combinatorial nature, the combinatorial part of the proof extends without any problem to Banach spaces. The issue is to prove H\"older continuity of $U$-biinvariant matrix coefficients of arbitrary $X$-valued representations of $K$. An easy observation (Proposition \ref{prop=all_reduces_to_regular}) allows to reduce this task to just the left regular representation on the Bochner space $L^2(K;X)$ of $L^2$ functions from $K$ to $X$, with $K$ acting by translation. All this was well known to Lafforgue (see question b in the introduction of \cite{MR2574023}). Our main result is therefore the following (see \eqref{eq=def_of_Tdelta} for the definition of $T_\delta$)~: 

\begin{thm}\label{thm=main_result_Banach}
If $X$ be a Banach space in the class $\mathcal E_{4}$, then there is a constant $C>0$ and $s>0$ such that for every $\delta \in [-1,1]$
\[ \|(T_0 - T_\delta)\otimes \mathrm{Id}_X\|_{B(L^2(K;X))} \leq C |\delta|^{s}.\]
$2s$ is the product of all the $\theta$'s that appear in the process that produces $X$ from a space with type $p$ and cotype $q$ satisfying $1/p-1/q<1/4$.
\end{thm}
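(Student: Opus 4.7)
The plan is a two-stage argument that exploits the hypothesis defining $\mathcal{E}_4$, namely that $X$ is obtained by iterated complex interpolation starting from a space $X_0$ with type $p$ and cotype $q$ satisfying $1/p - 1/q < 1/4$. First I would establish the estimate in the base case, with H\"older exponent $1/2$, for every such well-behaved space $X_0$. Then I would extend to all of $\mathcal{E}_4$ by induction on the number of interpolation steps, where each application of complex interpolation multiplies the current exponent by the corresponding $\theta$.

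For the base case, I would use the Peter-Weyl decomposition of $L^2(K)$, with $K=\SO(3)$, into isotypic components indexed by $\ell\in\N$, the $\ell$-th block having dimension $(2\ell+1)^2$. Because $T_\delta$ is built from the group structure of $K$, $T_0-T_\delta$ acts block-diagonally on this decomposition, and on the $\ell$-th block it is given by a matrix $M_\ell(\delta)$ whose Schatten-class norms are controlled by Lemma \ref{lemma=T_delta_in_Sp} from \cite{MR2838352}. The core analytic step is to combine this scalar Schatten estimate with a vector-valued Hausdorff-Young / noncommutative Khintchine inequality on $K$, valid for Banach spaces with type $p$ and cotype $q$, to bound $\|M_\ell(\delta)\otimes\mathrm{Id}_{X_0}\|$ on each $X_0$-valued block. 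One then interpolates the H\"older bound $|\delta|^{1/2}$ (sharp for small $\ell$) against the trivial uniform bound (useful for large $\ell$) and sums over $\ell$. The condition $1/p - 1/q < 1/4$ is precisely what makes this sum converge, producing H\"older exponent $1/2$ on $L^2(K;X_0)$.

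For the inductive step, given $X = [X_0,X_1]_\theta$ with $X_0$ a space for which the desired bound has already been proved with exponent $s'$ and $X_1$ an arbitrary Banach space, I would apply complex (Stein) interpolation to the constant analytic family $z\mapsto T_0 - T_\delta$, using the trivial estimate $\|(T_0-T_\delta)\otimes\mathrm{Id}_{X_1}\|_{B(L^2(K;X_1))} \leq 2\|T_0\|$ on the $X_1$-side and the inductive bound $C|\delta|^{s'}$ on the $X_0$-side. The standard behavior of complex interpolation with respect to Bochner spaces yields $L^2(K;X) = [L^2(K;X_0),L^2(K;X_1)]_\theta$, so interpolation gives a bound of order $|\delta|^{s'\theta}$ on $L^2(K;X)$. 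Iterating through the chain of interpolations producing $X$ from a good space multiplies the exponent by every $\theta_i$ in turn, yielding $s = (1/2)\prod_i \theta_i$, which is the relation $2s = \prod_i \theta_i$ asserted in the statement.

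The principal obstacle is the base case. The trivial operator-norm estimate on $M_\ell(\delta)$ does not benefit from the Schatten-class precision at small $\delta$, whereas the $S_p$ estimate blows up in $\ell$; the condition $1/p - 1/q < 1/4$ is the exact threshold at which the vector-valued Khintchine dualization, together with the dimension counts on each isotypic component of $\SO(3)$, closes the estimate with a summable series. Once this analytic core is in place, the interpolation stage is purely formal bookkeeping.
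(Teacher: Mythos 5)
Your overall architecture — establish a base-case bound for spaces with good type and cotype by combining the $S^p$-estimate of Lemma \ref{lemma=T_delta_in_Sp} with a type/cotype argument, then propagate it through the interpolation chain — matches the paper's structure, and the interpolation stage is correctly carried out (it is Lemma \ref{lemma=stability_under_subquotients_ultrapr}, relying on property (\ref{item=commute_avec_L2}) of the functor). But the base case as you describe it has a real gap, and the tools you name are not the ones that close it.

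The problem is the decomposition you propose. On the isotypic block for the spin-$\ell$ representation, $T_0 - T_\delta$ is $(P_\ell(0)-P_\ell(\delta))Q_\ell$ with $Q_\ell$ an orthogonal projection of rank $2\ell+1$, so the bound you can extract from each block is $|P_\ell(0)-P_\ell(\delta)|\cdot e_{2\ell+1}(X)$ where $e_n(X)$ is the worst norm of a rank-$n$, norm-$1$ operator tensored with $\mathrm{Id}_X$. Using $e_n(X)\lesssim T_p(X)C_q(X)\,n^{1/p-1/q}$ (this is the K\"onig--Retherford--Tomczak-Jaegermann bound, Theorem \ref{thm=KRTJ}; there is no role here for Hausdorff--Young or Khintchine) and $|P_\ell(0)-P_\ell(\delta)|\approx\min(\ell^{1/2}|\delta|,\ell^{-1/2})$, the resulting series $\sum_\ell\min(\ell^{1/2}|\delta|,\ell^{-1/2})\,\ell^{1/p-1/q}$ does not converge for any nontrivial type/cotype; it actually diverges as $\delta\to 0$ because the $(2\ell+1)$-fold multiplicity inflates $e_{2\ell+1}(X)$ far faster than the eigenvalue decays. ``Interpolating the H\"older bound against the trivial uniform bound'' cannot fix this: the per-block trivial bound itself grows with $\ell$.

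What the paper does instead is ignore the Peter--Weyl block structure and decompose $T_0-T_\delta$ by its \emph{singular values}, regrouped dyadically by cumulative rank (Lemma \ref{lemma=decomposition}): write $T=\sum_k\alpha_k u_k$ with $\mathrm{rank}(u_k)\leq 2^k$, $\|u_k\|\leq 1$, $\sum_k 2^k|\alpha_k|^r\leq 2\|T\|_{S^r}^r$. Because the eigenvalue $P_\ell(0)-P_\ell(\delta)$ has multiplicity $2\ell+1$, the $k$-th dyadic rank-block corresponds to isotypic indices $\ell\approx 2^{k/2}$, so there are only logarithmically many dyadic blocks where your construction had linearly many isotypic blocks; this is precisely the saving that makes $\sum_k|\alpha_k|e_{2^k}(X)\lesssim(\sum_k 2^k|\alpha_k|^r)^{1/r}(\sum_k 2^{r'(1/p-1/q-1/r)k})^{1/r'}$ converge under $1/p-1/q<1/r$ (Proposition \ref{prop=lien_entre_typecotype_Schatten}). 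Note also that the base-case exponent obtained is $\tfrac12-\tfrac2r$ for a suitable $r>4$ with $1/p-1/q<1/r$, not exactly $\tfrac12$; the relation ``$2s=\prod\theta_i$'' in the statement is only an approximate description of this. So the essential missing ingredient in your proposal is the passage from the Peter--Weyl decomposition to the dyadic singular-value decomposition, together with the $e_n(X)$ quantity and the KRTJ inequality in place of the Fourier-analytic inequalities you invoke.
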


The fact that Theorem \ref{thm=main_result_Banach} implies Theorem \ref{thm=T_renforce_Banachique_SL3_vague} is due to Lafforgue. The proof has not been written, but it is an adaptation of the proof in the case of Hilbert spaces in \cite[Th\'eor\`eme 2.1]{MR2423763}. For the reader's convenience we give a detailed proof.

\subsection{Organization of the paper} In \S \ref{section=notation} we set the notation, collect preliminary facts on Banach spaces, representations of topological groups on Banach spaces and in particular compact groups, and on the representation theory of the pair $(\SO(3),\SO(2))$. \S \ref{section=main_part} contains the proof of Theorem \ref{thm=main_result_Banach}. \S \ref{section=Lafforgue_theorem} contains a detailed description of Lafforgue's method with some new variants. It contains the proofs of Theorem \ref{thm=T_renforce_Banachique_SL3_vague} and \ref{thm=quantitative_howe_moore_vague}.

\paragraph{Acknowledgements} The questions I address in this paper were successively asked to me by Gilles Pisier, Vincent Lafforgue and Uri Bader. This paper owes a lot to Gilles Pisier. I thank warmly Uri Bader for inviting me to Israel and for extremely stimulating discussions. I thank him, Tim de Laat and the anonymous referees for very useful comments. I also thank Christian Le Merdy and \'Eric Ricard for useful discussions at a preliminary stage of this work.

\section{Notation and Preliminaries}\label{section=notation}

All the Banach spaces will be over $\C$.

For numerical expressions $A, B$ (allowed to depend on some parameters), we will write $A \lesssim B$ to denote $A \leq C B$ for some universal constant
$C>0$. We will write $A \lesssim_i B$ to denote $A \leq C_i B$ for $C_i>0$ depending only on the parameter $i$.

We could not find a reference for all the results in this section (in particularfor the first half of Theorem \ref{thm=peterWeyl}), but we believe that none of them are new, except Proposition \ref{prop=nonspherical_Holder_general}.

\subsection{Banach space valued $L^p$ spaces} When $X$ is a Banach space and $(\Omega,\mu)$ a measure space, a function $f:\Omega \to X$ is said to be Bochner measurable if it is the almost everywhere limit of a sequence of measurable finite valued functions from $\Omega$ to $X$.

$L^p(\Omega,\mu;X)$ or $L^p(\Omega;X)$ denotes the Bochner space of
$X$-valued $L^p$ functions, \emph{i.e.} the space of Bochner measurable functions such that \[\|f\|_p:=\left(\int \|f(\omega)\|_X^p d\mu(\omega)\right)^{1/p}<\infty.\] $L^p(\Omega,\mu;X)$ can also be seen as the completion of the space of
simple functions $t \in \Omega \mapsto \sum \chi_{E_i}(t)x_i$ (finite
sum with $x_i \in X$ and $E_i \subset \Omega$ measurable with finite measure) with respect
to the seminorm $\left(\int_\Omega \|f(t)\|_X^p d\mu(t)
\right)^{1/p}$.

In the particular case when $p=1$, the integral of $f \in L^1(\Omega,\mu;X)$, written $\int f d\mu \in X$, is well defined on the dense subspace of functions with finite values, and extends by density to a norm $1$ map. For more background on Banach space valued $L^p$ spaces, see \cite{MR0453964}.

When $T:L^p(\Omega,\mu) \to L^p(\Omega,\mu)$ we will denote by $T_X$
the operator $T \otimes id_X$ (a priori only defined on
the dense subspace $L^p \otimes X$ of $L^p(\Omega;X)$). If $T_X$
extends to a bounded operator on $L^p(\Omega;X)$ we will denote by 
$\|T_X\| = \|T \otimes id_X\|_{B(L^p(\Omega;X))}$ its norm. Otherwise
we set $\|T_X\|=\infty$. When $X$ is an $L^p$-space, $\|T_X\|=\|T\|$ by Fubini's theorem. In particular, if $p=2$ and $X$ is a Hilbert space, $\|T_X\| = \|T\|$. The operators for which $\|T_X\|<\infty$ for every Banach space $X$ have several equivalent characterizations, that we recall.

\subsection{Regular operators}\label{subsection=regular} Let $(\Omega,\mu)$ be a measure space and $1<p<\infty$. 

A linear operator $T:L^p(\Omega,\mu) \to L^p(\Omega,\mu)$ is called
\emph{regular} if one of the following equivalent conditions hold~:
\begin{itemize}
\item There is a constant $C$ such that for all Banach spaces $X$,
  $\|T_X\| \leq C$.
\item There is a constant $C$ such that $\|T_{\ell^\infty}\| \leq C$.
\item There is a bounded operator $S:L^p(\Omega,\mu)
  \to L^p(\Omega,\mu)$ such that $|T(f)| \leq S(|f|)$ for all $f$.
\item Up to a change of the measure $\mu$, $T$ is simultaneously
  bounded on $L^1(\Omega,\mu)$ and $L^\infty(\Omega,\mu)$.
\end{itemize}

The best $C$ is then equal to $\inf_S \|S\|_{L^p \to L^p}$ and to
the infimum over the changes of measures of $\max (\|T\|_{L^1 \to L^1}, \|T\|_{L^\infty \to L^\infty})$, and will be denoted by
$\|T\|_{B_r(L^p)}$. By change of measure, we mean that we replace
$\mu$ by an equivalent measure $\nu$ and $T$ by $ u \circ T \circ
u^{-1}$ where $u:L^p(\Omega,\mu) \to L^p(\Omega, \nu)$ is the isometry
given by $uf(t) = (d\mu/d\nu)(t)^{1/p} f(t)$. See \cite{MR2732331} and the references therein for details.

\subsection{Schatten classes} 
For a Hilbert space $H$, and $1\leq p < \infty$, the Schatten
$p$-class $S^p(H)$ (or $S^p$) is the space of operators on $H$ such
that $ (T^* T)^{p/2}$ is of trace class. It is a Banach space for the
norm $\|T\|_{S^p} = Tr ((T^* T)^{p/2})^{1/p}$. $S^\infty$ is the space
of compact operators with operator norm. Note that if $p<q$ we have
a contractive inclusion $S^p \subset S^q$~: it is harder to be in
$S^p$ than in $S^q$.

\subsection{Interpolation}\label{subsection=interpolation} 
Everything that will be used from the theory of interpolation of
Banach spaces can be found in \cite{MR0482275}. For convenience we briefly recall the definition. A compatible couple $(X_0,X_1)$ is a pair of Banach spaces together with continuous linear embeddings of $X_0$ and $X_1$ in a same topological vector space. Through this embedding, the sum $x_0+x_1$ of an element $x_0$ of $X_0$ and $x_1$ of $X_1$ makes sense as an element of the underlying topological vector space, and we define the norm of such an element as the infimum over all such decompositions of $\|x_0\|_{X_0} + \|x_1\|_{X_1}$. We get a Banach space denoted $X_0+X_1$. Consider the Banach space of functions $f:\overline S=\{z \in \C, \Re(z) \in [0,1]\} \to X_0+X_1$ that are bounded continuous on $\overline S$, holomorphic in the interior of $\overline S$, and such that the restrictions $t \in \R \mapsto f(k+it)$ belong to $C_0(\R;X_k)$ for $k=0$ and $k=1$. The norm of $f$ is $ \sup_{t \in \R,k \in \{0,1\}}\|f(k+it)\|_{X_k}$. Let $\theta \in (0,1)$. By Hadamard's three line lemma the map $f \mapsto f(\theta) \in X_0+X_1$ is continuous. The complex interpolation space $[X_0,X_1]_{\theta}$ is defined as the image of this map, with norm the quotient norm. For example, if we fix a measure space, the classical Riesz-Thorin theorem can be expressed in this setting as the isometric equalities $L^{p_\theta} = [L^{p_0},L^{p_1}]_\theta$ for every $p_0,p_1,p_\theta \in [1,\infty]$ satisfying $1/p_\theta = (1-\theta)/p_0+\theta/p_1$. Analogous results hold for noncommutative $L^p$ spaces. The fundamental property of complex interpolation is the following~: if $(X_0,X_1)$ and $(Y_0,Y_1)$ are compatible couples, and if a map $T:X_0 + X_1 \to Y_0 + Y_1$ maps $X_0$ in $Y_0$ and $X_1$ in $Y_1$, then it also maps $[X_0,X_1]_\theta$ in $[Y_0,Y_1]_\theta$, with norm at most $\|T\|_{X_0\to Y_0}^{1-\theta} \|T\|_{X_1\to Y_1}^{\theta}$. One says that the functor $\mathcal C_\theta$ from the category of compatible pairs of Banach spaces to the category of Banach spaces that associates $[X_0,X_1]_{\theta}$ to $(X_0,X_1)$ is an \emph{exact interpolation functor of exponent $\theta$}. The additional property of this functor that we use is that it \emph{commutes with vector-valued $L^p$-spaces}~: for every compatible pair $(X_0,X_1)$ and every $p_0,p_1 \in [1,\infty]$, $[L^{p_0}(\Omega;X_0),L^{p_1}(\Omega;X_1)]_\theta = L^{p_\theta}(\Omega;[X_0,X_1]_\theta)$ with $1/p_\theta = (1-\theta)/p_0 + \theta/p_1$ through the natural identifications. 
\begin{rem}\label{rem=real_interpolation}
We choose to talk about complex interpolation, but the only properties of the complex interpolation functor $\mathcal C_\theta$ that we use are the following
\begin{enumerate}
\item \label{item=exponenttheta} It is a functor of exponent $\theta>0$.
\item \label{item=commute_avec_L2} For any measure space and every compatible pair $(X_0,X_1)$, 
\[L^2(\Omega;\mathcal C_\theta(X_0,X_1)) = \mathcal C_\theta(L^2(\Omega;X_0),L^2(\Omega;X_1)) \textrm{   (equivalent norm)}.\]
\end{enumerate}
In fact by a small adaptation of Lemma \ref{lemma=stability_under_subquotients_ultrapr} we could as well work with any interpolation functor $\mathcal C_\theta$ satisfying (\ref{item=exponenttheta}) and (\ref{item=commute_avec_L2}) replaced by $\exists p \in (1,\infty)$ such that, $L^p(\Omega;\mathcal C_\theta(X_0,X_1)) = \mathcal C_\theta(L^p(\Omega;X_0),L^p(\Omega;X_1))$ (equivalent norm). By \cite[5.8.2]{MR0482275} these properties also hold for the real interpolation functor $(X_0,X_1)\mapsto [X_0,X_1]_{\theta,p}$ (for $1<p<\infty$). 
\end{rem}
\subsection{Superreflexive spaces} A Banach space $X$ is called superreflexive if all its ultrapowers are reflexive. Equivalently, if $X$ is isomorphic to a uniformly convex Banach space. A Banach space is uniformly convex if
\[ \sup\left\{ \|u+v\|/2, \|u\|,\|v\|\leq 1, \|u-v\| \geq  \varepsilon\right\} <1 \textrm{ for all }\varepsilon>0.\]

\subsection{Rademacher Type and cotype} We briefely recall definitions and properties of type and cotype. For details on this, proofs, references and more, see \cite{MR1999197}.

Let $(g_i)_{i \in \N}$ be a sequence of independent complex gaussian
$\mathcal N(0,1)$ random variables defined on some probability space
$(\Omega,\mathbb P)$.

\begin{dfn}\label{def=type_cotype}
A Banach space $X$ is said to have type $p\geq 1$ if there is a constant $T$
such that for all $n$ and all $x_1,\dots,x_n \in X$,
\[ \| \sum_i g_i x_i\|_{L^2(\Omega;X)} \leq T \left(\sum_i \|x_i\|^p\right)^{1/p}.\]
The best $T$ is denoted by $T_p(X)$. 

A Banach space $X$ is said to have cotype $q\leq\infty$ if there is a constant
$C$ such that for all $n$ and all $x_1,\dots,x_n \in X$,
\[ \left(\sum_i \|x_i\|^q\right)^{1/q} \leq C \| \sum_i g_i x_i\|_{L^2(\Omega;X)} .\]
The best $C$ is denoted by $C_q(X)$. 
\end{dfn}
Note that if $X \neq \{0\}$ has type $p$ and cotype $q$, necessarily $p \leq 2$
and $q \geq 2$ (take $x_i$ all equal). By H\"older's inequality, it also has type $\widetilde p$ for every $1\leq \widetilde p \leq p$ and cotype $\widetilde q$ for every $\widetilde q \geq q$. Every Banach space has type $1$ and cotype $\infty$ with constant $1$. A Banach space is therefore said to have nontrivial type (cotype) if it has type $p>1$ (respectively cotype $q<\infty$). Hilbert spaces have type $2$ and cotype $2$, and by a theorem of Kwapie{\'n} \cite{MR0341039} this property characterizes the Banach space that are isomorphic to Hilbert spaces. Superreflexive spaces have nontrivial type, but the converse is not true~: there are spaces of nontrivial type that are not even reflexive. More importantly for our purposes, for every $q>2$ there are Banach spaces that are not reflexive but have type $2$ and cotype $q$ \cite{MR907695}.

By the properties of complex interpolation recalled above, type is stable under complex interpolation, and more precisely if $(X_0,X_1)$ is a compatible pair with $X_k$ of type $p_k$, then $[X_0,X_1]_\theta$ has type $p_\theta$ with $1/p_\theta = (1-\theta)/p_0 + \theta/p_1$. In particular if one of $X_0$ or $X_1$ has nontrivial type, every intermediate space will also have nontrivial type. Moreover type and cotype behave nicely with respect to duality~: if a Banach space has type $p>1$ and cotype $q$, its dual has type $q'$ and cotype $p'$ for the conjugate exponents of $q$ and $p$ respectively. 

In the definitions we can replace the independent gaussian variables
by independent Rademacher (uniformly distributed in $\{-1,1\}$)
variables, and this explains the terminology. We can also replace the norm $L^2$ by $L^r$ for any other $r \in (1,\infty)$. We get the same notion, but with different values of
$T_p(X)$ and $C_q(X)$.

\subsection{Representations} 
A linear continuous representation of a locally compact group $G$ equipped with a left Haar measure on a Banach space $X$ is a group morphism from $G$ to the invertible operators in $B(X)$ that is continuous if $B(X)$ is equipped with the strong operator topology. Unless explicitly specified, we will always assume that the linear representations that we are considering are continuous. A Banach $G$-space is a Banach space with a linear continuous representation of $G$.

When $m$ is a compactly supported signed Borel measure on $G$ and $\pi$ is a continuous representation of $G$ on a Banach space $X$, we will denote $\pi(m) \in B(X)$ the operator defined by
\begin{equation}\label{eq=definition_of_pi(m)} \pi(m) \xi = \int \pi(g)\xi dm(g) \textrm{ (Bochner integral) }\forall \xi \in X.\end{equation}
This definition makes sense because by the definition of the strong operator topology, $g\mapsto \pi(g) \xi$ is continuous. We might sometimes abusively write $\pi(m) = \int \pi(g) dm(g)$ when we mean \eqref{eq=definition_of_pi(m)}. When $f \in C_c(G)$ we will denote by $\pi(f)$ the value of \eqref{eq=definition_of_pi(m)} for the measure $f dg$.

By abuse, if $G$ is compact and $m$ is the Haar probability measure on $G$, we will denote $\pi(m) = \int_G \pi(g) dm(g)$ by $\pi(G)$. Similarly, if $K$ is a compact subgroup of $G$, we will write $\pi(KgK) = \pi(K) \pi(g) \pi(K)$  ($\pi(Kg)=\pi(K)\pi(g)$, $\pi(gK) = \pi(g) \pi(K)$) the operator corresponding to the $K$-biinvariant (left, right $K$-invariant) probability measure on the coset $KgK$ (respectively $Kg$, $gK$). 
 
The contragredient representation $\tr \pi$ of a representation $\pi$ of $G$ on $X$ is usually defined as the representation of $G$ on $X^*$ given by $g \in G \mapsto \pi(g^{-1})^*$. Even if $\pi$ is continuous, this representation is not necessarily continuous if $G$ is not discrete and $X$ is not reflexive (think of $G$ acting by translation on $L^1(G)$), we therefore prefer here to define $\tr \pi$ as the restriction of the representation $g \in G \mapsto \pi(g^{-1})^*$ to the set of vectors $x \in X^*$ such that $g \mapsto \pi(g^{-1})^*x$ is continuous. This makes sense by the
\begin{lemma}\label{lemma=contragredient} If $\pi$ is a continuous representation of $G$ on a Banach space $X$, the set of $x \in X^*$ such that $g \mapsto \pi(g^{-1})^*x$ is continuous is a weak-* dense Banach subspace invariant by $\pi(g^{-1})^*$ for every $g \in G$.
\end{lemma}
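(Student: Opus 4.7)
Let $X^*_c$ denote the set of $x \in X^*$ for which $g \mapsto \pi(g^{-1})^*x$ is norm continuous from $G$ to $X^*$. I would prove the three required properties (subspace, closed, invariant, weak-* dense) in order, with weak-* density being the substantive step.

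\textbf{Local boundedness and closedness.} First I would invoke uniform boundedness: since $g \mapsto \pi(g)\xi$ is continuous, hence locally bounded, for each $\xi$, the Banach--Steinhaus theorem gives $M_K := \sup_{g \in K}\|\pi(g)\|_{B(X)} < \infty$ for every compact $K \subset G$. Linearity of $X^*_c$ is obvious; for closedness, let $x_n \to x$ in $X^*$ with $x_n \in X^*_c$, fix $g_0$ and a compact neighborhood $U$ of $g_0$, and estimate
\[\|\pi(g^{-1})^*x - \pi(g_0^{-1})^*x\| \leq 2M_U\|x - x_n\| + \|\pi(g^{-1})^*x_n - \pi(g_0^{-1})^*x_n\|\]
for $g \in U$. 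Choosing $n$ large then $g$ close to $g_0$ yields continuity, so $x \in X^*_c$.

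\textbf{Invariance.} For $h \in G$ and $x \in X^*_c$, the map $g \mapsto \pi(g^{-1})^*\pi(h^{-1})^*x = \pi((hg)^{-1})^*x$ is the composition of the continuous map $g \mapsto hg$ with the continuous map $g \mapsto \pi(g^{-1})^*x$, hence continuous; so $\pi(h^{-1})^*x \in X^*_c$.

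\textbf{Weak-* density via smoothing.} For $\phi \in C_c(G)$, let $\pi(\phi)\xi := \int \phi(g)\pi(g)\xi\,dg$ as in \eqref{eq=definition_of_pi(m)}. I claim $\pi(\phi)^*x \in X^*_c$ for every $x \in X^*$. Using the left Haar measure and a change of variable $g \mapsto gh$ in the defining integral, one obtains
\[\pi(\phi)\pi(h^{-1}) = \Delta(h)\,\pi(R_h\phi),\qquad (R_h\phi)(g) := \phi(gh),\]
where $\Delta$ is the modular function. For $h$ varying in a compact neighborhood $V$ of $h_0$, the supports of the $R_h\phi$ all lie in a common compact set $K$, so
\[\|\pi(R_{h_0}\phi) - \pi(R_h\phi)\|_{B(X)} \leq M_K \|R_{h_0}\phi - R_h\phi\|_{L^1(G)},\]
and the right hand side tends to $0$ as $h \to h_0$ by the standard uniform continuity of translation on $L^1(G)$. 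Consequently $h \mapsto \pi(\phi)\pi(h^{-1})$ is norm continuous in $B(X)$, and taking adjoints gives norm continuity of $h \mapsto \pi(h^{-1})^*\pi(\phi)^*x$, proving $\pi(\phi)^*x \in X^*_c$.

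To conclude weak-* density, pick an approximate identity $(\phi_i) \subset C_c(G)$ of non-negative functions with $\int \phi_i = 1$, supported in shrinking neighborhoods of $e$. For each $\xi \in X$, the strong continuity of $\pi$ yields $\pi(\phi_i)\xi \to \xi$ in norm, hence
\[\langle \xi, \pi(\phi_i)^*x\rangle = \langle \pi(\phi_i)\xi, x\rangle \longrightarrow \langle \xi, x\rangle.\]
Thus $\pi(\phi_i)^*x \to x$ weak-*, and since each $\pi(\phi_i)^*x$ lies in $X^*_c$, weak-* density follows.

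\textbf{Main obstacle.} The only nontrivial point is showing that convolution by $\phi \in C_c(G)$ pushes $X^*$ into $X^*_c$; the subtlety is that strong continuity of $\pi$ does not give norm continuity of $h \mapsto \pi(h^{-1})$, and one must genuinely exploit the smoothing by $\phi$ via the $L^1$-translation argument above. Everything else is formal once local boundedness of $\|\pi(g)\|$ is in hand.
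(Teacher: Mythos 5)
Your proposal is correct and follows essentially the same route as the paper's own proof: both reduce to showing that vectors of the form $\pi(\phi)^*x$ with $\phi\in C_c(G)$ lie in the continuous set, via the norm continuity of $h\mapsto\pi(\phi)\pi(h^{-1})$ coming from translation continuity of $C_c(G)$ in $L^1(G)$, and then conclude weak-* density using an approximate identity. The only cosmetic differences are that you spell out the closedness, invariance, and local boundedness steps that the paper labels ``clear,'' and you verify weak-* density directly by showing $\pi(\phi_i)^*x\to x$ weak-* rather than by the paper's annihilator-in-the-predual argument; both arguments are standard and interchangeable.
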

\begin{proof} It is clear that the set of such vectors forms a closed vector space invariant by $\pi(g^{-1})^*$ for every $g \in G$. To prove that it is weak-* dense we first prove that it contains all vectors of the form $\pi(f)^*x$ for $f \in C_c(G)$ and $x \in X$. Indeed, $g \mapsto f \ast \delta_{g^{-1}}$ is continuous as a map from $G$ to the compactly supported measures on $G$, so that $g \mapsto \pi( f) \pi(g)$ is continuous as a map from $G$ to $B(X)$ equipped with the norm topology, which clearly implies that $g \mapsto \pi(g^{-1})^* \pi(f)^* x$ is continuous. It remains to prove that $\{ \pi(f)^* x, x \in X, f \in C_c(G)\}$ is weak-* dense. Let $y\in X$ such that $\langle y, \pi(f)^* x \rangle =0$ for all such $f$ and $x$. Taking $f_n \geq $ with support converging to $\{1\}$ and $ \int f_n =1$, we get $\langle y,x\rangle =\lim_n \langle \pi(f_n) y, x\rangle = 0$ for all $x \in X$. Hence $y=0$, and this proves the claimed density.
\end{proof}

\subsection{Representations of compact group on Banach spaces}
Let $K$ be a compact group. By the Peter-Weyl theorem, every unitary representation of $K$ decomposes as a direct sum of irreducible representations. For isometric (or arbitrary continuous) representations on Banach spaces, a similar phenomenon holds but in a weaker form. We recall this here.

We will use the following. One form of the Peter-Weyl theorem provides an orthonormal decomposition $L^2(K)=\oplus_{V} E_V$ where the sum is indexed by the equivalence classes of irreducible unitary representations of $K$ and $E_V$ is the space of coefficients of $V$. The irreducible unitary representation of $K$ on $V$ is denoted by $\pi_V$, and the space of coefficients of $V$ is the linear space spanned by the coefficients $k \mapsto \langle \pi_V(k) \xi ,\eta\rangle$ for $\xi, \eta \in V$. The orthogonal projection from $L^2(K)$ on the space of coefficients of $V$ is given by the left or right convolution by $d_V \chi_V$, where $d_V$ is the dimension of $V$ and $\chi_V$ is the character of $V$, that is the function $\chi_V\colon k \mapsto Tr(\pi_V(k))$.

\begin{dfn}\label{dfn=finite_K_type}
Let $\pi$ be a continuous representation of a compact group $K$ on a Banach space $X$. 

A vector $\xi \in X$ is of finite $K$-type if $\mathrm{span}(\pi(K) \xi)$ is finite dimensional.

If $V$ is an irreducible unitary representation of $K$, a vector $\xi \in X$ is called of $K$-type $V$ if for every $\eta \in X^*$, the coefficient $k \mapsto \langle \pi(k) \xi,\eta\rangle$ belongs to the space of coefficients of $V$. We denote $X_V$ the vector space consisting of vectors of $K$-type $V$.
\end{dfn}
By an easy application of Hahn-Banach, a vector of $K$-type $V$ is of finite $K$-type. Indeed $\mathrm{span}(\pi(K) \xi)$ has dimension of at most the dimension of the space of coefficients of $V$, that is $d_V^2$.

We will need the following generalization of the usual Peter-Weyl theorem to actions of compact groups on Banach spaces. In this statement $X_1 \injtens X_2$ denotes the injective tensor product of the Banach spaces $X_1$ and $X_2$, one of the natural Banach space completions of $X_1 \otimes X_2$. We recall that when $X_2$ is finite dimensional $X_1 \injtens X_2$ is naturally isometric with the Banach space of bounded linear maps from $X_2^*$ to $X_1$. This precise choice of a norm on the tensor product has no importance except for the precise value of the constants.
\begin{thm}[Peter-Weyl Theorem]\label{thm=peterWeyl} Let $\pi$ be a continuous isometric representation of a compact group $K$ on a Banach space $X$.
\begin{itemize}
\item For every irreducible unitary representation $V$ of $K$, $\pi(d_V \overline{\chi_V})$ is a $K$-equivariant projection on $X_V$. 
\item There is a Banach space $Y$ and a $K$-equivariant isomorphism $u:X_V \to Y \injtens V$ ($K$ acting trivially on $Y$) satisfying $\|u\| \leq d_V$, $\|u^{-1}\|\leq d_V$.
\item The space $X_{finite}$ of vectors of finite $K$-type is the direct sum of the subspaces $X_V$, for $V$ in the equivalence classes of irreducible unitary representations of $K$.
\item $X_{finite}$ is dense in $X$.
\end{itemize}
\end{thm}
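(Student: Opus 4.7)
The plan is to treat the four assertions in turn, transposing the classical Peter-Weyl theorem to the Banach setting via Schur orthogonality and characters.

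For the first assertion, set $P_V = \pi(d_V \overline{\chi_V})$. The equivariance $\pi(g)P_V = P_V \pi(g)$ follows from $\chi_V$ being a class function by a change of variable. The projection property $P_V^2 = P_V$ comes from the classical convolution identity $(d_V \overline{\chi_V}) \conv (d_V \overline{\chi_V}) = d_V \overline{\chi_V}$ in $C(K)$ combined with $\pi(f_1)\pi(f_2) = \pi(f_1\conv f_2)$. To identify the range as $X_V$, test against $\eta \in X^*$: for $\xi \in X$, the function $k \mapsto \langle \pi(k)P_V\xi, \eta\rangle$ is the convolution of $\langle \pi(\cdot)\xi, \eta\rangle$ with $d_V\overline{\chi_V}$, hence lies in $E_V$ by classical Peter-Weyl for $L^2(K)$. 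Conversely, if $\xi \in X_V$, any coefficient $\psi(k) = \langle \pi(k)\xi,\eta\rangle$ has the form $\mathrm{Tr}(\pi_V(k)A)$ for some $A \in \mathrm{End}(V)$, and Schur orthogonality gives $\int d_V\overline{\chi_V(h)}\psi(h)\,dh = \psi(1) = \langle \xi,\eta\rangle$; valid for all $\eta$, this forces $P_V\xi = \xi$.

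For the second assertion, fix an orthonormal basis $(e_i)_{i=1}^{d_V}$ of $V$ and introduce the matrix-coefficient operators $P_{V,ij} = d_V \int_K \overline{\langle \pi_V(k)e_j, e_i\rangle}\, \pi(k)\,dk$, so that $\sum_i P_{V,ii} = P_V$. Schur orthogonality gives the matrix-unit relations $P_{V,ij}P_{V,kl} = \delta_{jk}P_{V,il}$ together with the intertwining identity $\pi(g)P_{V,ij} = \sum_m \langle \pi_V(g) e_i, e_m\rangle\, P_{V,mj}$. Set $Y = P_{V,11}X$ and, using the identification $Y \injtens V = L(V^*, Y)$, define $u\colon X_V \to L(V^*, Y)$ by $u(\xi)(v^*) = \sum_i v^*(e_i) P_{V,1i}\xi$, with inverse $u^{-1}(T) = \sum_i P_{V,i1} T(e_i^*)$. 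The matrix-unit relations show these maps are mutually inverse and intertwine $\pi|_{X_V}$ with $\mathrm{Id}_Y\otimes \pi_V$. The bounds $\|u\|, \|u^{-1}\|\leq d_V$ follow from $\|\pi(k)\|=1$ and from the fact that for every $k$ the vectors $\sum_i \overline{\pi_V(k)_{i1}} e_i^* \in V^*$ and $\sum_i \overline{\pi_V(k)_{1i}} e_i \in V$ have norm $1$ (by unitarity of $\pi_V(k)$).

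For the third and fourth assertions, if $\xi$ is of finite $K$-type, $W = \mathrm{span}(\pi(K)\xi)$ is finite-dimensional and $K$-invariant, so classical Peter-Weyl for $\pi|_W$ decomposes $W$ into $V$-isotypic pieces contained in the $X_V$; hence $X_{finite} = \sum_V X_V$. The sum is direct thanks to the character orthogonality $(d_V\overline{\chi_V})\conv(d_W\overline{\chi_W}) = \delta_{V,W}\, d_V\overline{\chi_V}$, which yields $P_V P_W = \delta_{V,W}P_V$. For density, pick $\xi \in X$ and an approximate identity $(f_n) \subset C(K)$ concentrated near $1$ with $\int f_n = 1$; strong continuity of $\pi$ gives $\pi(f_n)\xi \to \xi$. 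Classical Peter-Weyl provides uniform approximations $g_n = \sum_V h_{V,n}$ of $f_n$ with $h_{V,n} \in E_V$ (finite sum in $V$), and for each such matrix coefficient $h \in E_V$, the left-translation invariance of $E_V$ implies $\pi(K)\pi(h)\xi \subset \pi(E_V)\xi$, a finite-dimensional subspace of $X$; thus $\pi(h)\xi \in X_{finite}$, $\pi(g_n)\xi \in X_{finite}$, and $\pi(g_n)\xi \to \xi$. The main technical delicacy I expect is verifying the norm bound $\|u^{-1}\|\leq d_V$ in the second assertion; everything else amounts to careful bookkeeping of Peter-Weyl identities transposed to the Banach-valued setting.
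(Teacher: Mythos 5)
Your proof is correct. The first and third bullets follow essentially the paper's own computations: the coefficient $k\mapsto\langle\pi(k)\pi(d_V\overline{\chi_V})\xi,\eta\rangle$ is the image of $k\mapsto\langle\pi(k)\xi,\eta\rangle$ under the classical projection onto $E_V$ (one small bookkeeping point: with the usual convention this projection is right convolution by $d_V\chi_V$, not by $d_V\overline{\chi_V}$ -- harmless, since the conclusion that the image lies in $E_V$ is what you use), and directness plus the decomposition of $\mathrm{span}(\pi(K)\xi)$ into irreducibles is exactly the paper's argument. The second and fourth bullets, however, take genuinely different routes. For $X_V\simeq Y\injtens V$ the paper defines $Y$ intrinsically as the space of $K$-equivariant bounded maps $V\to X$, identifies $Y\injtens V$ with equivariant maps $S^1(V)\to X$, and produces the mutually inverse pair $\Phi(u)=u(\mathrm{Id})$ and $\Psi(\xi)(T)=d_V\int_K \mathrm{Tr}(\pi_V(k)^*T)\,\pi(k)\xi\,dk$; your matrix-unit construction $P_{V,ij}$ with corner $Y=P_{V,11}X$ is a basis-dependent but more explicit version of the same statement, and your norm estimates do give $\|u\|,\|u^{-1}\|\le d_V$ exactly as claimed, since rows and columns of the unitary matrix $\pi_V(k)$ have $\ell^2$-norm one and $\pi$ is isometric on $K$. (Equivariance is cleanest to verify on $u^{-1}$, where your stated left identity $\pi(g)P_{V,ij}=\sum_m\langle\pi_V(g)e_i,e_m\rangle P_{V,mj}$ suffices; checking it directly on $u$ needs the companion identity for $P_{V,1i}\pi(g)$.) For density, the paper dualizes: a functional $\eta$ vanishing on $X_{finite}$ kills the $E_V$-component of every coefficient $k\mapsto\langle\pi(k)\xi,\eta\rangle$, hence kills the coefficient by the scalar Peter--Weyl theorem, so $\eta=0$ by Hahn--Banach. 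Your argument instead pushes an approximate identity through $\pi$ and uses uniform density of coefficients in $C(K)$ together with $\|\pi(g_n-f_n)\xi\|\le\|g_n-f_n\|_\infty\|\xi\|$ (so you do need, as you implicitly arrange, $\|g_n-f_n\|_\infty\to0$). Your route is more constructive and avoids duality; the paper's is shorter and gives no explicit approximating sequence. Both are complete.
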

The third and fourth items were proved by Shiga \cite{MR0082624}.
\begin{proof}
Let $V$ an irreducible representation of $K$, and let us prove that $\pi(d_V \overline{\chi_V})$ is a projection on $X_V$. Denote by $P_V$ the orthogonal projection from $L^2(K)$ onto the space of coefficients of $V$, and recall that $P_V$ coincides with the right (and also left) convolution by $d_V \chi_V$. Let $\xi \in X$, $\eta \in X^*$. Then for every $k_0 \in K$, using that $\overline{\chi_V(k)} = \chi_V(k^{-1})$,
\begin{align*} \langle \pi(k_0) \pi(d_V \overline{\chi_V}) \xi,\eta\rangle &= \int_K \langle \pi(k_0 k)\xi,\eta\rangle d_V \chi_V(k^{-1}) dk\\ & =  P_V(k \mapsto \langle \pi(k) \xi,\eta\rangle)(k_0). \end{align*}
This shows that $\pi(d_V \overline{\chi_V}) \xi$ belongs to $X_V$, and that $\pi(d_V \overline{\chi_V}) \xi=\xi$ if $\xi \in X_V$. $\pi(d_V \overline{\chi_V})$ is therefore a projection on $X_V$. It is clearly equivariant.

Define $Y$ as the Banach space of $K$-equivariant bounded maps from $V$ to $X$. The map $y \otimes v\in Y\otimes V \mapsto (T \mapsto y(T v)) \in B(S^1(V),X)$ induces an isometric isomorphism from $Y \injtens V$ onto the space $E$ of linear maps $S^1(V) \to X$ that are $K$-equivariant for the left action of $K$ on the Schatten class $S^1(V)$. Note for later use that an element $u \in E$ is in fact a map from $S^1(V)$ to $X_V$, because for all $T \in S^1(V)$ and $\eta \in X^*$, $\langle \pi(k) u(T),\eta\rangle = \langle \pi_V(k) T, u^* \eta\rangle$ is a coefficient of $V$. Through this isomorphism, the action of $K$ on $Y \otimes V$ (trivial action on $Y$, natural action on $V$) corresponds to the action on $E$ given by $k \cdot u(T) = u(T \pi_V(k))$ for every $u \in E$ and $T \in S^1(V)$. We now construct an isomorphism of $K$-Banach spaces between $E$ and $X_V$, this will conclude the proof of the second item. Consider $\Phi: u \in E \mapsto u(\mathrm{Id}) \in X$. For $\xi \in X$, define $\Psi(\xi) \in E$ by
\[\Psi(\xi)(T) = d_V \int_K Tr( \pi_V(k)^*T) \pi(k) \xi\ \  \forall T \in S^1(V).\]
This defines $K$-equivariant maps $\Psi:X \to E$ and $\Phi:E \to X$ satisfying $\|\Phi\|\leq d_V$ and $\|\Psi\|\leq d_V$. The formula $\Phi \circ \Psi(\xi) = \int_K d_V Tr(\pi_V(k)^*) \pi(k) \xi$ means that $\Phi \circ \Psi = \pi(d_V \overline{\chi_V})$ is the $K$-equivariant projection on $X_V$. It remains to check that $\Psi \circ \Phi$ is the identity of $E$. We have to show that $d_V \int_K Tr(\pi_V(k)^* T) \pi(k)u(\mathrm{Id}) = u(T)$ for all $T \in B(V)$. Since $V$ is irreducible, $\pi_V(K)$ spans $B(V)$, and hence it suffices to prove this equality with $T=\pi_V(k_0)$ for $k_0 \in K$. Then we have
\[d_V \int_K Tr(\pi_V(k)^* \pi_V(k_0)) \pi(k)u(\mathrm{Id}) = d_V \pi(\overline{\chi_V})\pi(k_0)u(\mathrm{Id})= u(\pi_V(k_0)).\]
This first equality was the change of variable $k k_0^{-1}$, the second equality is because $\pi(k_0) u(\mathrm{Id}) = u(\pi_V(k_0))$ belongs to $X_V$ and $\pi(d_V \overline{\chi_V})$ is a projection on $X_V$.

We now prove that $X_{finite}$ is the direct sum of the $X_V$'s. By the orthogonality relation of characters of $K$, $\pi(d_V \overline{\chi_V}) \pi(d_W \overline{\chi_W})=0$ if $V$ and $W$ are non-equivalent irreducible representations of $K$. This shows that the spaces $X_V$ are in direct sum. As already noted, they are contained in $X_{finite}$. It remains to prove that every element $\xi$ of $X_{finite}$ belongs to $\oplus_V X_V$. If the finite dimensional representation $\mathrm{span}(\pi(K) \xi)$ is irreducible (say it is isomorphic to $V$), then $\xi$ belongs to $X_V$. In general the finite dimensional representation $\mathrm{span}(\pi(K) \xi$ decomposes as a finite direct sum of irreducible representations. In particular $\xi$ decomposes as a finite sum of vectors that each belong to one of the $X_V$'s.

Let us show that $X_{finite}$ is dense. Take $\eta \in X^*$ that vanishes on $X_{finite}$, and take $\xi \in X$ arbitrary. The function $f:k \in K \mapsto \langle \pi(k) \xi,\eta\rangle$ satisfies $P_V f(k) = \langle d_V \overline{\chi_V}(\pi) \pi(k) \xi,\eta\rangle = 0$ for every irreducible representation of $G$. By the Peter-Weyl theorem this function is zero in $L^2(K)$ and hence identically zero because it is continuous. In particular $\langle \xi,\eta\rangle=0$. Since $\xi$ is arbitrary, this shows that $\eta=0$, and hence $X_{finite}$ is dense by the Hahn-Banach theorem.
\end{proof}
\begin{rem}\label{rem=finite_type_on_dual} On the dual of $X$, the space $X^*_V$ can be defined in two (or three) equivalent ways. Either as the space of vectors $\xi \in X^*$ such that $k \mapsto \langle \pi(k) \xi,\eta\rangle$ belongs to the space of coefficients of $V$ for every $\eta \in X$ (or every $\eta \in X^{**}$), or as the space of vectors in the space $Y \subset X^*$ of the contragradrient representation of $\pi$ such that $k \mapsto \langle \pi(k) \xi,\eta\rangle$ for all $\eta$ in $Y^*$. Then by Theorem \ref{thm=peterWeyl} and Lemma \ref{lemma=contragredient}, the space $X^*_{finite} = \oplus_V X^*_V$ is a weak-* dense subspace of $X^*$.
\end{rem}

The following is a generalization to Banach spaces of the fact that every unitary representation of a compact group is weakly contained in the regular representation, and will be used in Section \ref{section=Lafforgue_theorem}.
\begin{prop}\label{prop=all_reduces_to_regular} Let $\pi$ be an isometric (strongly continuous) representation of a compact group $K$ on a Banach space $X$. Then for every signed Borel measure $m$ on $K$,
\[ \| \pi(m)\|_{B(X)} \leq \| \lambda(m)_X\|_{B(L^2(K;X))}.\]
\end{prop}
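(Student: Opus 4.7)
The plan is to test $\lambda(m)_X$ against a cleverly chosen $X$-valued function of norm $\|\xi\|$ that reproduces $\pi(m)\xi$. The natural candidate, encoding the whole orbit of $\xi$ under $\pi$, is
\[ f_\xi(k) := \pi(k^{-1})\xi.\]
Strong continuity of $\pi$ ensures that $f_\xi\colon K \to X$ is norm-continuous, hence Bochner measurable, and since $\pi$ is isometric and the Haar measure on $K$ is normalized,
\[ \|f_\xi\|_{L^2(K;X)}^2 = \int_K \|\pi(k^{-1})\xi\|^2\, dk = \|\xi\|^2.\]

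Next I would compute $\lambda(m)_X f_\xi$ directly. With the convention $\lambda(g)f(k) = f(g^{-1}k)$, and using the definition \eqref{eq=definition_of_pi(m)} of $\pi(m)$ together with the fact that the bounded operator $\pi(k^{-1})$ commutes with Bochner integration, one gets pointwise in $k$:
\[ (\lambda(m)_X f_\xi)(k) = \int_K f_\xi(g^{-1}k)\, dm(g) = \int_K \pi(k^{-1}g)\xi\, dm(g) = \pi(k^{-1})\pi(m)\xi.\]
Using once more that $\pi$ is isometric,
\[ \|\lambda(m)_X f_\xi\|_{L^2(K;X)}^2 = \int_K \|\pi(k^{-1})\pi(m)\xi\|^2\, dk = \|\pi(m)\xi\|^2.\]

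Combining the two norm identities yields
\[ \|\pi(m)\xi\| = \|\lambda(m)_X f_\xi\|_{L^2(K;X)} \leq \|\lambda(m)_X\|_{B(L^2(K;X))}\, \|f_\xi\|_{L^2(K;X)} = \|\lambda(m)_X\|_{B(L^2(K;X))}\, \|\xi\|,\]
and taking the supremum over $\xi$ in the unit ball of $X$ concludes. I do not anticipate a real obstacle: the argument is the natural Banach space transcription of the classical Hilbert space statement that every unitary representation of a compact group is weakly contained in the regular representation, and it notably bypasses the Peter-Weyl decomposition established just above. The only point that merits a line of justification is the commutation of $\pi(k^{-1})$ with the Bochner integral defining $\pi(m)$, which is standard since $\pi(k^{-1})$ is a bounded linear operator on $X$.
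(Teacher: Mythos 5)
Your proof is correct and is essentially the paper's argument in unwound form: the paper embeds $X$ as the constant functions in $L^2(K;X)$ and conjugates $\lambda\otimes\pi$ to $\lambda\otimes\mathrm{Id}_X$ via the isometry $Vf(k)=\pi(k)f(k)$, and your test vector $f_\xi(k)=\pi(k^{-1})\xi$ is exactly $V^{-1}$ applied to the constant function $\xi$, so your pointwise computation is the same intertwining relation made explicit. No gap; the justification of the pointwise formula for $\lambda(m)_X$ on continuous $X$-valued functions (agreement with $\lambda(m)\otimes\mathrm{Id}_X$ on simple tensors plus boundedness by $|m|(K)$) is as routine as you indicate.
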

\begin{proof}
Let us identify $X$ isometrically as the subspace of constant functions in $L^2(K;X)$. This realizes $\pi$ as a subrepresentation of $\lambda \otimes \pi$ on $L^2(K;X)$, and hence 
\[\| \pi(m)\|_{B(X)} \leq \| (\lambda \otimes \pi)(m)\|_{B(L^2(K;X))}.\]
The surjective isometry $V$ of $L^2(K;X)$ defined by $Vf(k)=\pi(k)f(k)$ satisfies $\lambda \otimes \pi = V \circ (\lambda \otimes \mathrm{Id}) \circ V^{-1}$, which gives
\[\| (\lambda \otimes \pi)(m)\|_{B(L^2(K;X))} = \| \lambda(m)_X\|_{B(L^2(K;X))}.\qedhere\]
\end{proof}

In the next result we consider a closed subgroup $U$ of $K \times K$ acting on $K$ by left-right multiplication, that is by $(k,k')\cdot x = kxk'^{-1}$. This result will be useful in the situation when the $U$-invariant matrix coefficients of $K$ are known to have some nice continuity behaviour, and will allow to deduce similar behaviour but for more general $U$-equivariant coefficients. This will play a key role in section \ref{section=Lafforgue_theorem}, for $K=\SO(3,\R)$ and $U = \SO(2,\R) \times \SO(2,\R)$.

\begin{prop}\label{prop=nonspherical_Holder_general} Let $K$ be a compact Lie group with some biinvariant riemannian metric $d$, $U \subset K\times K$ a closed subgroup and $x_0 \in K$. For every character $\chi$ of $U$ that is trivial on the stabilizer of $x_0$ for the action of $U$ by left-right multiplication on $K$, there exists $C_{\chi} \in \R^+$ such that the following holds. For every isometric representation $\pi$ of $K$ on a Banach space $X$ and $\xi,\eta$ unit vectors in $X,X^*$, if the matrix coefficient $c(x) = \langle \pi(x) \xi,\eta\rangle$ satisfies $c(u \cdot x)=\chi(u) c(x)$ for all $x\in K, u \in U$, then 
\[ |c(x) - c(x_0)| \leq C_{\chi}\left( d(x,x_0) +  \| (\int_U \lambda(u \cdot x) - \lambda(u \cdot x_0) du) \otimes \mathrm{Id}_X\|_{B(L^2(K;X))}\right).\]
\end{prop}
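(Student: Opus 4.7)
The plan is to reduce the matrix-coefficient bound to an operator norm estimate on the left-regular representation via Proposition~\ref{prop=all_reduces_to_regular}, and then to extract both a Lipschitz error (contributing the $d(x,x_0)$ term) and an averaging term (controlled by $\|A_X\|$) by means of a smooth extension of the character $\chi$ to $K$.

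First I would use that $|\chi| \equiv 1$ and $\int_U |\chi(u)|^2\, du = 1$, combined with the $\chi$-equivariance of $c$, to write
\[c(x) - c(x_0) = \int_U \overline{\chi(u)}[c(u\cdot x) - c(u\cdot x_0)]\, du = \langle \pi(m)\xi, \eta\rangle,\]
where $m := \int_U \overline{\chi(u)}(\delta_{u\cdot x} - \delta_{u\cdot x_0})\, du$ is a compactly supported signed Borel measure on $K$. Proposition~\ref{prop=all_reduces_to_regular} then yields
\[|c(x) - c(x_0)| \leq \|\pi(m)\|_{B(X)} \leq \|\lambda(m)_X\|_{B(L^2(K;X))}.\]

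Next, because $\chi$ is trivial on $\mathrm{Stab}_U(x_0)$, it descends to a smooth function on the embedded submanifold $U\cdot x_0 \cong U/\mathrm{Stab}_U(x_0) \subset K$. Using a tubular neighbourhood of this orbit together with a partition of unity, I would construct $\phi \in C^\infty(K;\C)$ with $|\phi| \leq 1$, Lipschitz constant $L_\phi \leq C_\chi$, and $\phi(u\cdot x_0) = \chi(u)$ for all $u \in U$. Moreover $\phi$ can be arranged to be locally $U$-equivariant, $\phi(u\cdot z) = \chi(u)\phi(z)$, on a neighbourhood of the orbit.

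Writing $\overline{\chi(u)} = \bar\phi(u\cdot x_0)$, I would decompose
\[\bar\phi(u\cdot x_0)[\lambda(u\cdot x) - \lambda(u\cdot x_0)] = [\bar\phi(u\cdot x)\lambda(u\cdot x) - \bar\phi(u\cdot x_0)\lambda(u\cdot x_0)] + [\bar\phi(u\cdot x_0) - \bar\phi(u\cdot x)]\lambda(u\cdot x).\]
Integrating over $U$ and using that $\lambda$ is isometric on $L^2(K;X)$ together with the biinvariance of $d$ (so that $d(u\cdot x, u\cdot x_0) = d(x, x_0)$) and the Lipschitz bound on $\phi$, the second piece contributes an operator of norm at most $L_\phi\, d(x, x_0)$.

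The remaining task, and the main obstacle, is to bound
\[\bigl\|\int_U [\bar\phi(u\cdot x)\lambda(u\cdot x) - \bar\phi(u\cdot x_0)\lambda(u\cdot x_0)]\, du \otimes \mathrm{Id}_X\bigr\|_{B(L^2(K;X))}\]
by $C\|A_X\| + C'\,d(x, x_0)$. The plan is to use the local $U$-equivariance of $\phi$ to substitute $\bar\phi(u\cdot x) = \bar\phi(x)\,\overline{\chi(u)}$ inside the integral (for $x$ close enough to $x_0$ that the orbit $U\cdot x$ lies in the tubular neighbourhood), and then to commute the multiplication operator $M_{\bar\phi}$ on $L^2(K)$ past the translation operators $\lambda(u\cdot y)$, picking up commutators whose norm is controlled by $L_\phi$ times the diameter of the involved displacements. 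A careful bookkeeping, combined with $|1 - \bar\phi(x)| \leq L_\phi d(x, x_0)$, is intended to express the weighted averaging operator as a bounded conjugate of $A$ by multiplication operators modulo terms of norm $O(L_\phi\, d(x, x_0))$.

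The hard part will be making this commutator analysis rigorous: one must simultaneously arrange a smooth global extension $\phi$ of $\chi$ with controlled Lipschitz constant and exploit its local $U$-equivariance to relate $M_{\bar\phi}\lambda(g) - \lambda(g)M_{\bar\phi}$ to a Lipschitz-in-$d$ quantity on the support of the relevant measures. Combining all four steps and collecting the constants into $C_\chi$ then gives the claimed bound $|c(x) - c(x_0)| \leq C_\chi(d(x, x_0) + \|A_X\|)$.
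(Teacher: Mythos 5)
Your opening reduction is fine: writing $c(x)-c(x_0)=\langle \pi(m)\xi,\eta\rangle$ with $m=\int_U \overline{\chi(u)}(\delta_{u\cdot x}-\delta_{u\cdot x_0})\,du$ and invoking Proposition \ref{prop=all_reduces_to_regular} is correct, and so is splitting off the term $[\bar\phi(u\cdot x_0)-\bar\phi(u\cdot x)]\lambda(u\cdot x)$, which indeed costs only $\mathrm{Lip}(\phi)\,d(x,x_0)$. But the step you yourself flag as the ``main obstacle'' is a genuine gap, and the commutator strategy you sketch does not close it. The operators you must compare are the \emph{$\chi$-weighted} average $\int_U\overline{\chi(u)}\lambda(u\cdot x)\,du$ and the \emph{unweighted} one appearing in the statement, and you propose to pass from one to the other by commuting a multiplication operator $M_{\bar\phi}$ past the translations $\lambda(u\cdot y)$. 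However $[M_{\bar\phi},\lambda(g)]$ is multiplication by $k\mapsto\bar\phi(k)-\bar\phi(g^{-1}k)$ composed with $\lambda(g)$, so its norm is of order $\mathrm{Lip}(\phi)\,d(g,e)$; here $g=u\cdot x$ runs over the whole orbit $U\cdot x$, whose diameter is comparable to that of $K$, not to $d(x,x_0)$. These commutators are therefore $O(1)$, not $O(d(x,x_0))$, and cannot be absorbed into the $C_\chi\,d(x,x_0)$ term. More structurally, no conjugation by multiplication operators can reproduce a weight $\overline{\chi(u)}$ that depends on $u$ but is constant in the $L^2(K)$ variable, since conjugating $\lambda(u\cdot x)$ only translates the multiplier.

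The missing idea, which is how the paper proceeds, is to carry the weight on the \emph{vectors} rather than on an operator. One first builds (Lemma \ref{lemma=existence_of_elementinAK_of_type_chi}) a function $\psi$ in the Fourier algebra $A(K)$ -- not merely a Lipschitz function -- with $\psi(u\cdot x)=\overline{\chi(u)}\psi(x)$ globally, $\psi$ Lipschitz, and $\psi(x_0)=1$ (here triviality of $\chi$ on the stabilizer is used, by averaging a coefficient of a finite-dimensional subrepresentation of $\lambda$ against $\chi$ over $U$). Then $c\psi$ is genuinely $U$-invariant, and because $\psi$ is a coefficient of $\lambda$, Lemma \ref{lemma=product_of_AK_and_Bpi} (a Fell-absorption argument via the isometry $Vf(k)=\pi(k)f(k)$) realizes $c\psi$ as a matrix coefficient of $\lambda\otimes\mathrm{Id}_X$ on $L^2(K;X)$ with vectors $\widetilde\xi,\widetilde\eta$ of norm product at most $\|\psi\|_{A(K)}$. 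The $U$-invariance then gives $(c\psi)(x)=\langle\lambda(m_x)_X\widetilde\xi,\widetilde\eta\rangle$ for the \emph{unweighted} measures $m_x=\int_U\delta_{u\cdot x}\,du$, so the operator in the statement directly controls $|(c\psi)(x)-(c\psi)(x_0)|$, while $|c(x)-\psi(x)c(x)|\le\mathrm{Lip}(\psi)\,d(x,x_0)$ since $\psi(x_0)=1$ and $|c|\le 1$. It is precisely the $A(K)$-membership of $\psi$ (and its norm), not its smoothness, that makes the weighted-to-unweighted passage possible; your proposal has no substitute for this ingredient.
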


The case when $\chi$ is the trivial character is Proposition \ref{prop=all_reduces_to_regular}. For general $\chi$ one uses the following Lemma to reduce to the case $\chi=1$. Here $A(K)$ is the \emph{Fourier algebra of $K$}, that is the space of matrix coefficients of the representation $\lambda$ of $K$ on $L^2(K)$ by left translation, with norm defined by $\|\psi\|_{A(K)} \leq 1$ if and only if there are unit vectors $\xi,\eta \in L^2(K)$ such that $\psi(x) = \langle \lambda(x) \xi,\eta\rangle$.
\begin{lemma}\label{lemma=existence_of_elementinAK_of_type_chi}
Let $\chi$ be a character of $U$ that is trivial on the stabilizer of $x_0$. There exists $\psi \in A(K)$ such that
\begin{itemize}
\item $\psi(u\cdot x) = \overline{\chi(u)} \psi(x)$ for all $x \in K$, $u \in U$.
\item $\psi$ is a Lipschitz function.
\item $\psi(x_0)=1$.
\end{itemize}
\end{lemma}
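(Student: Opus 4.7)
The strategy is to build $\psi$ as a \emph{smooth} function on $K$. On a compact Lie group smoothness automatically implies the Lipschitz condition (by compactness), and also membership in $A(K)$, because the Peter--Weyl coefficients $\widehat{\psi}(\pi)$ of a $C^\infty$ function decay faster than any polynomial in the Casimir eigenvalue of $\pi$, while $\|\widehat{\psi}(\pi)\|_{S^1}$ is bounded by $d_\pi^{1/2}\|\widehat{\psi}(\pi)\|_{HS}$ and $d_\pi$ itself grows only polynomially; the series $\sum_\pi d_\pi\|\widehat{\psi}(\pi)\|_{S^1}$ defining the $A(K)$-norm therefore converges. So it is enough to construct a smooth $\psi$ with the correct equivariance and the correct value at $x_0$. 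The idea is to prescribe $\psi$ on the orbit $\mathcal O=U\cdot x_0$ using $\chi$, and then to extend it smoothly to $K$ through an equivariant tubular neighbourhood.

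The orbit $\mathcal O$ is a compact smooth submanifold of $K$, diffeomorphic to $U/U_{x_0}$ where $U_{x_0}$ denotes the stabilizer of $x_0$. Because $\chi$ is by assumption trivial on $U_{x_0}$, the rule $\tilde\chi(u\cdot x_0)=\overline{\chi(u)}$ well-defines a smooth function $\tilde\chi\colon \mathcal O\to\C$ that satisfies $\tilde\chi(u\cdot y)=\overline{\chi(u)}\tilde\chi(y)$ for every $y\in\mathcal O$ and $u\in U$, and satisfies $\tilde\chi(x_0)=1$.

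Since the Riemannian metric $d$ on $K$ is biinvariant, the action of $U\subset K\times K$ on $K$ by $(k,k')\cdot x=kx(k')^{-1}$ is isometric. Every standard Riemannian construction is therefore automatically $U$-equivariant: in particular, the normal exponential map of $\mathcal O$ is $U$-equivariant, and for $\varepsilon>0$ small enough it provides a $U$-equivariant diffeomorphism from the $\varepsilon$-disk bundle of the normal bundle of $\mathcal O$ onto the open tube $T=\{x\in K:d(x,\mathcal O)<\varepsilon\}$, together with a $U$-equivariant smooth retraction $r\colon T\to\mathcal O$. Pick a smooth function $\rho\colon K\to[0,1]$ that depends only on $d(\cdot,\mathcal O)$, equals $1$ on $\{d(\cdot,\mathcal O)\le\varepsilon/3\}$ and vanishes on $\{d(\cdot,\mathcal O)\ge\varepsilon/2\}$; by biinvariance of $d$, $\rho$ is automatically $U$-invariant.

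Finally define $\psi(x)=\rho(x)\tilde\chi(r(x))$ on $T$ and $\psi(x)=0$ outside $T$. Since $\rho$ vanishes identically in a neighbourhood of $\partial T$, the extension by $0$ is smooth on $K$. The identity
\[
\psi(u\cdot x)=\rho(u\cdot x)\tilde\chi(r(u\cdot x))=\rho(x)\tilde\chi(u\cdot r(x))=\overline{\chi(u)}\rho(x)\tilde\chi(r(x))=\overline{\chi(u)}\psi(x)
\]
holds on $T$ (and trivially outside, since $T$ is $U$-invariant), using successively the $U$-invariance of $\rho$, the $U$-equivariance of $r$, and the equivariance of $\tilde\chi$. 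Moreover $\psi(x_0)=\rho(x_0)\tilde\chi(x_0)=1$. The only place where a check is really needed is the $U$-equivariance of the geometric ingredients $r$ and $\rho$, and the point is that because $d$ is biinvariant no averaging over $U$ is necessary.
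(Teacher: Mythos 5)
Your construction is correct, but it follows a genuinely different route from the paper. The paper stays entirely inside the algebra of matrix coefficients: it takes an arbitrary coefficient $\psi^0$ of a finite-dimensional subrepresentation of $\lambda$, sets $\psi(x)=\int_U \chi(u)\psi^0(u\cdot x)\,du$ (so $\psi$ is automatically a coefficient of the same finite-dimensional representation, hence in $A(K)$ and $C^\infty$, hence Lipschitz, with the right $\chi$-equivariance), and then shows nonconstructively that some choice of $\psi^0$ gives $\psi(x_0)\neq 0$: if not, Peter--Weyl density of such coefficients in $C(K)$ would force $\psi(x_0)=0$ for \emph{every} $\psi^0\in C(K)$, contradicted by extending $u\cdot x_0\mapsto \overline{\chi(u)}$ (well defined by the stabilizer hypothesis) continuously from the orbit to $K$; one then normalizes by $\psi(x_0)$. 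You instead build $\psi$ explicitly: prescribe $\overline\chi$ on the orbit $U\cdot x_0$ and extend it by an equivariant tubular neighbourhood with a distance-based cutoff, then invoke $C^\infty(K)\subset A(K)$. Your route is constructive and gives $\psi(x_0)=1$ on the nose, but it pays for this with two standard-but-nontrivial inputs the paper avoids: the Sobolev-type embedding of smooth functions into the Fourier algebra of a compact Lie group (your rapid-decay argument is the right one and does go through), and the equivariant tubular neighbourhood theorem (where, as you note, the biinvariance of the metric makes the nearest-point retraction and the distance cutoff automatically $U$-equivariant; a careful writeup should also observe that the cutoff is smooth because it is constant near $\mathcal O$, where $d(\cdot,\mathcal O)$ itself is not differentiable, and that continuity of $\chi$ gives its smoothness so that $\tilde\chi$ descends smoothly to $U/U_{x_0}\simeq\mathcal O$). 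The paper's averaging trick is softer and shorter, and has the minor extra feature that the resulting $\psi$ lies in the coefficient space of a single finite-dimensional representation, though nothing in the application requires that.
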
              
Remark that the assumption on the stabilizer of $x_0$ is necessary in this Lemma, because the first implication implies that $\psi(x_0)(1-\overline \chi(u))=0$ for all $u$ in the stabilizer of $x_0$. It is therefore not possible to ensure that $\psi(x_0)\neq 0$ if $\chi$ is not trivial on the stabilizer of $x_0$.
\begin{proof}
If $\psi^0$ belongs to the space of coefficients of a finite dimensional subrepresentation of the representation $\lambda$ of $K$ on $L^2(K)$, then the function
\[\psi(x) = \int_{U} \chi(u) \psi^0(u\cdot x) du\]
also belongs to the space of coefficients of the same finite dimensional representation of $K$, and in particular is $C^\infty$ hence Lipschitz. Moreover it satisfies $\psi(u\cdot x) = \overline{\chi(u)} \psi(x)$. We only have to show that there is a choice of $\psi^0$ such that $\psi(x_0) \neq 0$, since then $\psi/\psi(x_0)$ will work. Assume by contradiction that $\psi(x_0)=0$ for all such $\psi^0$. By the Peter-Weyl theorem the coefficients of finite-dimensional subrepresentations of $\lambda$ form a dense subspace of $C(K)$, and hence $\psi(x_0) = 0$ for all $\psi^0\in C(K)$. But by our assumption on the stabilizer of $x_0$, $\psi^0( u\cdot x_0) = \overline{\chi(u)}$ defines a continuous function on the closed subset $U\cdot x_0$, that we can extend to a continuous map on $K$ and get a contradiction.
\end{proof}

The second Lemma we need expresses that the product of a function in $A(K)$ and of a coefficient of a representation of $K$ on $X$ is a coefficient of $\lambda \otimes \mathrm{Id}_X$ on $L^2(K;X)$.
\begin{lemma}\label{lemma=product_of_AK_and_Bpi}
Let $\pi$ be an isometric representation of $K$ on $X$, $\psi \in A(K)$ and $\xi \in X,\eta \in X^*$. There is $\widetilde \xi \in L^2(K;X), \widetilde \eta \in L^2(K;X^*)$ such that \[\psi(x)\langle \pi(x) \xi, \eta \rangle=\langle (\lambda(x) \otimes \mathrm{Id}_X) \widetilde \xi,\widetilde \eta \rangle\] and $\|\widetilde \xi\| \|\widetilde \eta\| \leq \|\psi\|_{A(K)} \|\xi\|\|\eta\|$.
\end{lemma}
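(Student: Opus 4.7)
The strategy is to write $\psi$ as a matrix coefficient of the regular representation and twist its scalar factorization by $\pi$ and $\pi^*$ in such a way that the two twists cancel when the pair is evaluated. By definition of the Fourier-algebra norm, I pick $f,g\in L^2(K)$ with $\|f\|_2\|g\|_2 \le (1+\varepsilon)\|\psi\|_{A(K)}$ and
\[\psi(x)=\langle\lambda(x)f,g\rangle_{L^2(K)}=\int_K f(x^{-1}k)\overline{g(k)}\,dk,\]
and let $\varepsilon\to 0$ at the end.

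I set $\widetilde\xi(k) = f(k)\,\pi(k^{-1})\xi \in X$ and $\widetilde\eta(k) = \overline{g(k)}\,\pi(k)^*\eta \in X^*$. Since $\pi$ is isometric, the pointwise norms are $|f(k)|\|\xi\|$ and $|g(k)|\|\eta\|$, giving the norm bound $\|\widetilde\xi\|_2\|\widetilde\eta\|_2 = \|f\|_2\|g\|_2\|\xi\|\|\eta\|$, which matches the claimed inequality in the limit. The Bochner measurability of $\widetilde\xi$ is immediate from strong continuity of $k\mapsto\pi(k^{-1})\xi$.

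The key cancellation identity is $\pi(k)\pi(k^{-1}x)=\pi(x)$, which collapses the integrand in the pairing:
\begin{align*}
\langle(\lambda(x)\otimes \mathrm{Id}_X)\widetilde\xi,\widetilde\eta\rangle
&=\int_K f(x^{-1}k)\overline{g(k)}\,\langle\pi(k^{-1}x)\xi,\,\pi(k)^*\eta\rangle\,dk\\
&=\langle\pi(x)\xi,\eta\rangle\int_K f(x^{-1}k)\overline{g(k)}\,dk
=\psi(x)\langle\pi(x)\xi,\eta\rangle.
\end{align*}

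The main obstacle is justifying that $\widetilde\eta$ genuinely lies in $L^2(K;X^*)$: the map $k\mapsto\pi(k)^*\eta$ is only weak-* continuous in general, since the contragredient representation need not be strongly continuous when $X$ is non-reflexive (see Lemma \ref{lemma=contragredient}), so strong measurability in $X^*$ is not automatic. I handle this by a separable reduction: replace $X$ by the closed $K$-invariant subspace $X_0=\overline{\mathrm{span}}\,\pi(K)\xi$, which is separable because the orbit $\pi(K)\xi$ is compact, and replace $\eta$ by $\eta|_{X_0}\in X_0^*$; on this separable $X_0$ the Pettis measurability theorem applies to the bounded weak-* continuous function $k\mapsto\pi(k)^*\eta|_{X_0}$, giving a strongly measurable modification. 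Alternatively, one simply interprets $\widetilde\eta$ as the bounded linear functional on $L^2(K;X)$ that it naturally defines (via the above pointwise pairing), which is all that is actually used in the proof of Proposition \ref{prop=nonspherical_Holder_general}.
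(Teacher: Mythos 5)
Your construction is essentially the paper's proof: writing $\psi(x)=\langle\lambda(x)f,g\rangle$ and setting $\widetilde\xi(k)=f(k)\pi(k^{-1})\xi$, $\widetilde\eta(k)=\overline{g(k)}\,\pi(k)^*\eta$ is exactly the paper's choice $\widetilde\xi=V^{-1}(f\otimes\xi)$, $\widetilde\eta=V^{*}(g\otimes\eta)$ for the isometry $Vh(k)=\pi(k)h(k)$, and your collapsing computation is the identity $\lambda\otimes\pi=V\circ(\lambda\otimes\mathrm{Id})\circ V^{-1}$ written out pointwise. Two remarks on the surrounding details. First, the $(1+\varepsilon)$ is not needed and, as written, does not quite give the stated inequality for a single pair $(\widetilde\xi,\widetilde\eta)$: one should use that the $A(K)$-norm is attained, i.e.\ every $\psi\in A(K)$ can be written as one coefficient with $\|f\|_2\|g\|_2=\|\psi\|_{A(K)}$ (this is what the paper does); alternatively the $\varepsilon$ is harmless for the way the lemma is applied. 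Second, your Pettis-based ``separable reduction'' is not correct: weak-* continuity of $k\mapsto\pi(k)^*\eta|_{X_0}$ only gives scalar measurability against $X_0\subset X_0^{**}$, not weak measurability, and the range need not be essentially norm-separable, so no strongly measurable modification exists in general. A concrete counterexample already has $X$ separable: take $X=L^1(K)$ with $\pi$ the translation representation and $\eta\in L^\infty(K)$ the indicator of a small metric ball; then $k\mapsto\pi(k)^*\eta$ is weak-* continuous but its values form an uncountable $1$-separated subset of $L^\infty(K)$, so it is not Bochner measurable even after modification on a null set. Your ``alternative'' resolution is the right one and is what the paper implicitly does: interpret $\widetilde\eta$ as the functional $h\mapsto\int_K\overline{g(k)}\langle\pi(k)h(k),\eta\rangle\,dk$ on $L^2(K;X)$, i.e.\ $V^*(\overline g\otimes\eta)\in L^2(K;X)^*$, whose norm is at most $\|g\|_2\|\eta\|$ (the integrand is measurable because $Vh$ is Bochner measurable); only this functional norm is used in the proof of Proposition \ref{prop=nonspherical_Holder_general}.
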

\begin{proof}
Write $\psi(x)=\langle\lambda(x) f_1,f_2\rangle$ for $f_1,f_2 \in L^2(K)$ with $\|f_1\| \|f_2\|=\|\psi\|_{A(K)}$. Then $\psi(x)\langle \pi(x) \xi, \eta \rangle=\langle (\lambda(x) \otimes \pi(x)) (f_1 \otimes \xi),(f_2 \otimes \eta)\rangle$. The operator $V$ on $L^2(K;X)$ defined by $Vf(k)=\pi(k)f(k)$ is an invertible isometry. Moreover $\lambda \otimes \pi = V \circ (\lambda \otimes \mathrm{Id}) \circ V^{-1}$, so that $\psi(x)\langle \pi(x) \xi, \eta \rangle=\langle (\lambda(x) \otimes \mathrm{Id}_X) \widetilde \xi,\widetilde \eta \rangle$ for $\widetilde \xi = V^{-1} (f_1 \otimes \xi)$ and $\widetilde \eta = V^*(f_2 \otimes \eta)$.
\end{proof}

\begin{proof}[Proof of Proposition \ref{prop=nonspherical_Holder_general}] Let $\psi \in A(K)$ given by Lemma \ref{lemma=existence_of_elementinAK_of_type_chi}. Then $(c\psi)(u \cdot x)=(c\psi)(x)$ for all $x \in K, u \in U$. Let $\widetilde \xi \in L^2(K;X)$ and $\widetilde \eta \in L^2(X;K)$ given by Lemma \ref{lemma=product_of_AK_and_Bpi}. For $x \in K$ consider the measure $m_x$ on $K$
\[ m_x(f) = \int_U f(u \cdot x) du\ \ \forall f \in C(K).\]
Then we have $(c\psi)(x) = \langle \lambda(m_x)_X \widetilde \xi,\widetilde \eta\rangle$. The inequality \[\|\widetilde \xi\|_{L^2(K;X)} \|\widetilde \eta\|_{L^2(K;X^*)} \leq \|\psi\|_{A(K)}\] therefore implies that
\[ |\psi(x) c(x)  - \psi(x_0) c(x_0)| \leq \| (\lambda(m_x) - \lambda(m_{x_0}))_X\| \|\psi\|_{A(K)}.\]
But $\psi(x_0) =1$ and $|\psi(x) - 1| \leq \textrm{Lip}(\psi) d(x,x_0)$, so that
\[ |c(x) -c(x_0)| \leq \| (\lambda(m_x) - \lambda(m_{x_0}))_X\| \|\psi\|_{A(K)} + \textrm{Lip}(\psi) d(x,x_0)|c(x)|.\]
This proves the Proposition because $|c(x)| \leq 1$.
\end{proof}

\subsection{Strong Banach property (T)}\label{subsection=strongT}
We now recall Lafforgue's original definition of Strong Banach property (T) and explain why it is equivalent to Definition \ref{def=strongT}. Given a function $M\colon G \to [1,\infty)$ that is bounded on the compacts, and given a class of Banach spaces $\mathcal E$, we can consider the Banach algebra $C_{M,\mathcal E}(G)$, which is the completion of the convolution algebra $C_c(G)$ for the norm $\|f\|_{M,\mathcal E} = \sup_{\pi} \|\pi(f)\|_{B(X)}$ where the supremum is over all representations $\pi$ of $G$ on a space $X \in \mathcal E$ satisfying $\|\pi(g)\| \leq M(g)$. By construction for every such representation there is a natural map still denoted by $\pi\colon C_{M,\mathcal E}(G) \to B(X)$. If the class $\mathcal E$ is stable under complex conjugation then $f \mapsto \overline f$ extends to an isometry of $C_{M,\mathcal E}(G)$, and if $\mathcal E$ is stable under duality and closed subspaces, then the usual involution $f \mapsto \check f$ (with $\check f(g) = \Delta(g^{-1}) f(g^{-1})$ where $\Delta$ is the modular function on $G$) also extends to an isometry of $C_{M,\mathcal E}(G)$, because the contragredient representation of $\pi$ defined in Lemma \ref{lemma=contragredient} satisfies $\|\tr \pi(f)\| = \|\pi(\check f)^*\| = \|\pi(\check f)\|$ for all $f \in C_c(G)$.

Following \cite{MR2574023} let us say that a class of Banach spaces $\mathcal E$ is of type $>1$ if there exists $p>1$ such that $\sup_{X \in \mathcal E} T_p(X)<\infty$.

\begin{dfn}\label{def=strongT_idempotent} A locally compact group $G$ has Strong Banach property (T) if for every length function $\ell$ and every class $\mathcal E$ of type $>1$ stable by duality, closed subspaces and complex conjugation, there exists $s>0$ such that for all $C \in \R^+$, $C_{e^{s \ell + C},\mathcal E}(G)$ contains a real and selfadjoint projection $p$ such that $\pi(p)$ is a projection on $X_\pi^G$ for all representations $\pi$ on $X_\pi \in \mathcal E$ satisfying $\|\pi(p)\| \leq e^{s\ell(g)+C}$.
\end{dfn}
\begin{rem} We prefer to add here the assumption that $\mathcal E$ is stable by subspaces, in order to the ensure that $C_{e^{s \ell + C},\mathcal E}(G)$ is an involutive algebra. Vincent Lafforgue explained to me that the reason why the projection has to be self-adjoint is to ensure that it is unique (there might several projections on $X^G$, but at most one such that $P^*$ is a projection on the vectors fixed by the contragredient representation).
\end{rem}

In the remaining of the paragraph let us explain why (1) $G$ has Strong Banach property (T) in the sense of \ref{def=strongT_idempotent} if and only if (2) $G$ has Strong Banach property (T) with respect to all Banach spaces of nontrivial type in the sense of \ref{def=strongT}. In order to ignore all set-theoretical issues in the argument (due to the fact that the Banach spaces do not form a set), let us only consider the Banach spaces that belong to some fixed set of Banach spaces, that is stable under countable $\ell^2$-direct sum (for example we could take for this set all the isomorphism classes of separable Banach spaces). Assume (1). Fix a length function $\ell$ on $G$. For every integer $k$, denote by $\mathcal E(k)$ the set of all Banach spaces such that $T_{1+1/k}(X) \leq k$. The set $\mathcal E(k)$ is of type $>1$. Hence there is $s>0$ such that for every $C>0$, there is a sequence $f_n$ of functions on $G$ such that $\pi(f_n)$ converges to a projection on $X^{\pi(G)}$ for all representation $\pi$ on $X \in \mathcal E(k)$ satisfying $\|\pi(p)\| \leq e^{s\ell(g)+C}$. Using that the limit of $f_n$ in $C_{e^{s \ell + C},\mathcal E}(G)$ is self-adjoint we can replace $f_n$ by $(f_n+\check f_n)/2$ and assume that the measure $f_n(g)dg$ is symmetric. By a diagonal argument the same sequence can be chosen for all $k$ and $C$. This shows (2) since every Banach space with nontrivial type belongs to $\mathcal E(k)$ for some $k$ and since the length function was arbitrary.

Now assume (2) and fix a length function $\ell$. Take $f \in C_c(G)$ with $\int f=1$. Then the measures $f \ast m_n$ are absolutely continuous with respect to the Haar measure and even correspond to $f_n \in C_c(G)$. Let $\mathcal E$ be a class of type $>1$. By considering a countable $\ell^2$-direct sum of spaces in $\mathcal E$ (which is a space with non trivial type), we see that there is $t>0$ such that for all $C$, $\pi(f_n) = \pi(f) \pi(m_n)$ converges to $\pi(f)\lim_n \pi(m_n) = \lim_n \pi(m_n)$, a projection on $X_\pi^{G}$, for all representations $\pi$ on $X_\pi \in \mathcal E$ such that $\|\pi(g)\|\leq e^{t \ell(g)+C}$. Moreover for every $C$, by considering a countable $\ell^2$-direct sum of such spaces $X_\pi$, this convergences holds uniformly over all representations $\pi$ on $X_\pi \in \mathcal E$ such that $\|\pi(g)\|\leq e^{t \ell(g) +C}$. This proves that $f_n$ converges in $C_{e^{s \ell + C},\mathcal E}(G)$ to an idempotent $p$ such that for all $\pi$, $\pi(p)$ is a projection on the invariant vectors. If moreover $\mathcal E$ is stable under duals and subspaces, by considering the contragredient representation and using that the measure $m_n$ is symmetric, we see that $p$ is self-adjoint. Also if $\mathcal E$ is stable under complex conjugation, $p$ is also real. This shows (1).

\subsection{The harmonic analysis of the pair $(\SO(3,\R), \SO(2,\R))$}
Denote by $(K,U)$ the pair $(\SO(3,\R), \SO(2,\R))$ for the inclusion $A
\in \SO(2,\R) \mapsto \begin{pmatrix}1&0\\ 0&A\end{pmatrix}$. $\SO(2)$ acts transitively on the unit circle in $\R^2$. Hence $U \backslash K/U$ can be identified with the segment $[-1,1]$, through the identification of $U x U$ with $x_{1,1}$. We will sometimes also consider the subgroup $\begin{pmatrix}\SO(2)&0\\ 0&1\end{pmatrix}$ of $K$, that we will denote $\widetilde U$.

Let $\lambda$ be the left regular representation of $K$ on $L^2(K)$.
For $x \in K$, the operator $\lambda(UxU)$ depends only on $x_{1,1}$. We therefore define, for $\delta \in [-1,1]$, the operator $T_\delta$ on $L^2(K)$ by 
\begin{equation}\label{eq=def_of_Tdelta} T_\delta = \lambda(UxU) \textrm{ for all }x \in K\textrm{ such that }x_{1,1}=\delta.\end{equation}

Remark that for every Banach space $X$, $\|(T_\delta)_X\| = 1$.

$T_\delta$ preserves $L^2(U\backslash K)$ and is zero on its
orthogonal, so that $T_\delta$ can be viewed as an operator on
$L^2(U\backslash K)$. Under the identification $U\backslash K \simeq
\sphere$ (with $ U k \mapsto k^{-1} e_1$) $T_\delta$ corresponds to
the following operator on $L^2(\sphere)$. For a continuous function
$f$ on $\sphere$, $T_\delta f(x)$ is the mean value of $f$ on the
circle $\{ y \in \sphere, \langle x, y \rangle =
\delta\}$. Probabistically $T_\delta$ is the Markov operator for the
Markov chain on $\sphere$ that jumps from a point $x$ to a point
uniformly distributed on the circle of euclidean radius $\sqrt{2 - 2\delta}$
around $x$.

Lafforgue proved in \cite[Lemme 2.2]{MR2423763} that the operators $T_\delta$ are selfadjoint, commute and their eigenvalues are $P_n(\delta)$ with mutliplicity $2n+1$ for $n \geq 0$. Here $P_n$ is the $n$-th Legendre polynomial normalized by $P_n(1)=1$. He also proved that $|P_n(0) - P_n(\delta)|\leq C |\delta|^{1/2}$ (by \cite[Lemma 3.11]{HaadL}, one can take $C=4$). This implies that $\|T_0-T_\delta\|_{B(L^2(K))} \leq 4 |\delta|^{1/2}$ for every $\delta \in [-1,1]$. Since for every Hilbert space $H$ and every operator $T$ on $L^2(K)$, $\|T_H\| = \|T\|$, Lemma \ref{prop=all_reduces_to_regular} implies that every normalized $U$-biinvariant matrix coefficient $c$ of a unitary representation of $K$ satisfies 
\begin{equation}\label{eq=U-invariant_coeff_hoelder} |c(x_0) - c(x)| \leq 4 |x_{1,1}|^{1/2}. \end{equation}

In \cite{MR2838352} we proved the following strengthening, that will also be crucial here.
\begin{lemma}\label{lemma=T_delta_in_Sp} For $\delta \in (-1,1)$, $T_\delta \in S^p$ if $p>4$. For $4 <p \leq \infty$ there is a constant $C_p \in \R_+$ such that for $\delta \in [-1/2,1/2]$,
\[ \|T_\delta - T_0\|_{S^p} \leq C_p |\delta|^{\frac 1 2 - \frac 2 p}.\]
\end{lemma}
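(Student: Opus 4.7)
My plan is purely spectral. The operators $\{T_\delta\}$ commute, are self-adjoint, and share a common eigenspace decomposition: the eigenvalue $P_n(\delta)$ has multiplicity $2n+1$, so
\[
\|T_\delta - T_0\|_{S^p}^p \;=\; \sum_{n\ge 0} (2n+1)\,\bigl|P_n(\delta)-P_n(0)\bigr|^p,
\qquad
\|T_\delta\|_{S^p}^p \;=\; \sum_{n\ge 0} (2n+1)\,|P_n(\delta)|^p.
\]
Both assertions thus reduce to summability estimates for explicit numerical series in $n$. The first assertion ($T_\delta\in S^p$ for $p>4$, $\delta\in(-1,1)$) will drop out immediately once I show $|P_n(\delta)|\lesssim_\delta n^{-1/2}$, since then the sum behaves like $\sum_n n^{1-p/2}$, which converges precisely when $p>4$.

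For the main quantitative estimate I would combine two complementary bounds on $|P_n(\delta)-P_n(0)|$ valid for $\delta\in[-\tfrac12,\tfrac12]$: a \emph{plateau bound} $|P_n(\delta)-P_n(0)|\lesssim n^{-1/2}$, good for large $n$, and a \emph{derivative bound} $|P_n(\delta)-P_n(0)|\lesssim n^{1/2}|\delta|$, good for small $n$. Both stem from the classical Laplace (Heine) asymptotic
\[
P_n(\cos\theta) \;=\; \sqrt{\tfrac{2}{\pi n \sin\theta}}\,\cos\bigl((n+\tfrac12)\theta-\tfrac\pi 4\bigr)+O\bigl(n^{-3/2}\bigr),
\]
uniform for $\theta$ in a compact subset of $(0,\pi)$. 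The plateau bound follows at once. For the derivative bound I would use the Legendre recurrence $(1-x^2)P_n'(x) = n\bigl(P_{n-1}(x)-xP_n(x)\bigr)$: on $[-\tfrac12,\tfrac12]$ the prefactor $(1-x^2)^{-1}$ is bounded, and the plateau bound for $P_{n-1}$ and $P_n$ yields $|P_n'(x)|\lesssim n^{1/2}$; the mean value theorem then gives the derivative bound. Together, $|P_n(\delta)-P_n(0)|\lesssim \min\bigl(n^{-1/2},\,n^{1/2}|\delta|\bigr)$.

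Splitting the series at the crossover $n_0:=\lfloor 1/|\delta|\rfloor$, the low-frequency part is controlled by $|\delta|^p\sum_{n\le n_0} n^{1+p/2} \asymp |\delta|^p\,n_0^{2+p/2} \asymp |\delta|^{p/2-2}$, and the high-frequency part by $\sum_{n>n_0} n^{1-p/2} \asymp n_0^{2-p/2} \asymp |\delta|^{p/2-2}$ (the tail converging precisely because $p>4$). Taking $p$-th roots yields $\|T_\delta-T_0\|_{S^p}\lesssim |\delta|^{1/2-2/p}$, as required. The limit case $p=\infty$ is consistent with (and already implied by) \eqref{eq=U-invariant_coeff_hoelder}.

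The main obstacle is getting the derivative bound with the \emph{correct} power $n^{1/2}$: Markov's inequality on $[-1,1]$ would yield only $|P_n'|\lesssim n^2$, and Bernstein's inequality on any subinterval bounded away from $\pm 1$ would yield only $n$. One really needs the oscillatory cancellation encoded in the Laplace asymptotic, which the recurrence $(1-x^2)P_n' = n(P_{n-1}-xP_n)$ neatly packages into the plateau bound for two consecutive Legendre polynomials. Once this gain is in hand, everything else is bookkeeping with the two-regime sum.
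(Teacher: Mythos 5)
Your proof is correct. The paper does not actually prove this lemma itself; it quotes it from \cite{MR2838352} (the author's earlier joint work with Lafforgue), so there is no in-text proof to compare against. Your argument is nonetheless the natural one given the spectral data the paper provides (eigenvalues $P_n(\delta)$ with multiplicity $2n+1$): the plateau bound $|P_n(\delta)|\lesssim n^{-1/2}$ on $[-\tfrac12,\tfrac12]$ from the Laplace asymptotic, the derivative bound $|P_n'(\delta)|\lesssim n^{1/2}$ via $(1-x^2)P_n' = n(P_{n-1}-xP_n)$ (your observation that Markov or Bernstein would lose a factor and that one needs the oscillatory cancellation is the genuine insight here), and then the two-regime sum split at $n_0\approx 1/|\delta|$ giving $\|T_\delta-T_0\|_{S^p}^p \lesssim |\delta|^{p/2-2}$. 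Both the $p>4$ threshold and the exponent $\tfrac12-\tfrac2p$ fall out as you computed, and the $p=\infty$ endpoint is consistent. The first assertion ($T_\delta\in S^p$ for $p>4$ and fixed $\delta\in(-1,1)$) follows from the plateau bound alone, with the implied constant depending on the distance from $\delta$ to $\pm1$ through $\sin\theta$.

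One small point worth making explicit if you write this up: the plateau bound for $P_n(\delta)-P_n(0)$ on $[-\tfrac12,\tfrac12]$ needs the Laplace asymptotic to be uniform on the corresponding compact $\theta$-interval $[\pi/3,2\pi/3]$, which it is — just say so, since the uniformity is exactly what propagates through the recurrence to give $|P_n'|\lesssim n^{1/2}$ uniformly on $[-\tfrac12,\tfrac12]$.
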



\section{Proof of Theorem \ref{thm=main_result_Banach}}\label{section=main_part}

A common important ingredient in the two proofs I know that some expanders are superexpanders (which means that they do not coarsely embed into a superreflexive Banach space) is a result of the form $\|A \otimes \mathrm{Id}_X\|_{B(L^p(\Omega;X)}<<1$ for some specific operator $A \in B(L^2(\Omega))$ of small norm and a superreflexive (or more generally of type $>1$) Banach space $X$. Both these results rely on some work relating the geometry of the Banach space $X$ to the boundedness of $A \otimes \mathrm{Id}_X$. For Lafforgue's construction through Banach strong property (T) for $SL(3)$ over a non-Archimedean local field, this result was \cite[Lemme 3.3]{MR2574023}, relying on \cite{MR675400}. For the construction by Manor and Mendel, this was \cite[Theorem 5.1]{manormendel}, relying on \cite{MR647811}. 

Theorem \ref{thm=main_result_Banach}, that we prove in this section, is a result of the same kind. It also relies on some work from the $80$'s namely \cite{MR576647}. Unfortunately the results from \cite{MR576647} do not seem to be enough to prove the conclusion of Theorem \ref{thm=main_result_Banach} for every space $X$ of non trivial type. This is related to questions related to type and cotype that had been left open, see the discussion in \S \ref{subsec=onestepfurther}.

\subsection{$\theta$-Hilbertian Banach spaces~: Pisier's Theorem}\label{subsection=thetaHilb}
Following \cite{MR555306} (see also \cite{MR2732331}), if $\theta \in (0,1]$, we say that a Banach space $X$ is (isomorphically) strictly $\theta$-Hilbertian if there is an interpolation pair $(X_0,X_1)$ \cite{MR0482275} such that $X_1$ is  a Hilbert space, and $X$ is isomorphic to the complex interpolation space $[X_0,X_1]_\theta$ (see \S \ref{subsection=interpolation} for reminders on interpolation). If $1<p<\infty$, by the Riesz-Thorin theorem recalled in \S \ref{subsection=interpolation}, $L^p$-spaces and noncommutative $L^p$ spaces are easy examples of strictly $\theta$-Hilbertian spaces (for $\theta>0$ such that $2/(2-\theta) \leq p \leq 2/\theta$)~: depending on the sign of $p-2$, $L^p$ is a complex interpolation space between the Hilbert space $L^2$ and $L^1$ of $L^\infty$. A strictly $\theta$-Hilbertian space is superreflexive. Pisier \cite{MR555306} proved that the converse holds for Banach lattices, and suggested that the converse might hold in general: every superreflexive Banach space is a subspace of a quotient of a strictly $\theta$-Hilbertian space for some $\theta>0$.

In \cite{MR2732331} a slightly more general notion was defined, called $\theta$-Hilbertian spaces. We choose not to work with this generality here. 

As explained in the introduction, a first observation (that was already known to Lafforgue and Pisier) is that $\SL(3,\R)$ has strong property (T) with respect to the class of Banach spaces that are isomorphically $\theta$-Hilbertian for some $\theta>0$ (and more generally with respect to spaces that are isomorphic to subspaces of quotients of ultraproducts of $\theta$-Hilbertian spaces). 

Let us explain this observation, that follows from the proofs of \cite{MR2423763} and of the results of \cite{MR2732331}.

One of the main results of \cite{MR2732331} is that for a Banach space
$X$ and $0< \theta <1$ the following are equivalent (regular operators and the notation $B_r$ were defined in \S \ref{subsection=regular})~:
\begin{enumerate}
\item \label{item=theta-hilbertien} $X$ is $\theta$-Hilbertian.
\item \label{item=theta_hilbertien_dual} There exists $C<\infty $ such
  that for every regular operator $T:L^2(\Omega,\mu) \to
  L^2(\Omega,\mu)$,
\[ \|T_X\| = \| T \otimes id_X\|_{L^2(\Omega; X) \to L^2(\Omega; X)} \leq C \|T\|_{B(L^2)}^\theta
\|T\|_{B_r(L^2)}^{1-\theta}.\]
\end{enumerate}

The implication \ref{item=theta-hilbertien} $\Rightarrow$
\ref{item=theta_hilbertien_dual} is the easiest, and can be summarized (at least for strictly $\theta$-Hilbertian spaces) by the following Lemma. 
\begin{lemma}\label{lemma=stability_under_subquotients_ultrapr} Let $(\Omega,\mu)$ be a measure space, $C\in\R_+$ and $T$ a bounded operator on $L^2(\Omega,\mu)$. The class of Banach spaces $X$ satisfying $\| T_X\| \leq C$ is stable under subspaces, quotients and ultraproducts.

If $(X_0,X_1)$ is a compatible couple and $X_\theta=[X_0,X_1]_\theta$, then
\[ \|T_{X_\theta}\| \leq \|T_{X_0}\|^{1-\theta} \|T_{X_1}\|^{\theta} \leq \|T\|_{B_r(L^2)}^{1-\theta} \|T_{X_1}\|^{\theta}.\]
In particular if $X_1$ is a Hilbert space
\[ \|T_{X_\theta}\| \leq \|T\|_{B_r(L^2)}^{1-\theta} \|T\|_{B(L^2)}^{\theta}.\]
\end{lemma}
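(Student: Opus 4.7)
The plan splits naturally into the two stability statements and the interpolation inequality, and both parts follow directly from standard properties of Bochner spaces and of the complex interpolation functor recalled in \S\ref{subsection=interpolation}.

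First I would handle stability. For a closed subspace $Y\subset X$, the inclusion $L^2(\Omega;Y)\hookrightarrow L^2(\Omega;X)$ is isometric and $T_Y$ is just the restriction of $T_X$, so $\|T_Y\|\leq\|T_X\|\leq C$. For a quotient $X\twoheadrightarrow X/Y$, the natural map $L^2(\Omega;X)/L^2(\Omega;Y)\to L^2(\Omega;X/Y)$ is an isometric surjection (standard; argue on simple functions and pass to the closure), and $T_{X/Y}$ is the quotient map induced by $T_X$, so its norm is dominated by $\|T_X\|$. For an ultraproduct $X=\prod_{\mathcal U}X_i$, the canonical map $L^2(\Omega;X)\to(\prod_{\mathcal U}L^2(\Omega;X_i))$ is an isometric embedding (immediate for simple functions, then pass to the closure), and under this embedding $T_X$ is the restriction of the ultraproduct of the operators $T_{X_i}$, whose norms are all bounded by $C$; thus $\|T_X\|\leq C$.

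Next I would prove the interpolation inequality. By the two properties recalled as (\ref{item=exponenttheta}) and (\ref{item=commute_avec_L2}) in Remark \ref{rem=real_interpolation}, we have, up to an equivalent norm,
\[
L^2(\Omega;X_\theta)=\mathcal C_\theta\bigl(L^2(\Omega;X_0),L^2(\Omega;X_1)\bigr).
\]
The operator $T$ acts on each $L^2(\Omega;X_k)$ with norm $\|T_{X_k}\|$, so the exactness of $\mathcal C_\theta$ gives
\[
\|T_{X_\theta}\|\leq\|T_{X_0}\|^{1-\theta}\,\|T_{X_1}\|^{\theta}.
\]
To get the second inequality, I invoke the first characterization in \S\ref{subsection=regular}: for any Banach space $Z$, $\|T_Z\|\leq\|T\|_{B_r(L^2)}$; applied to $Z=X_0$ this bounds $\|T_{X_0}\|$ by $\|T\|_{B_r(L^2)}$. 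The parenthetical conclusion when $X_1$ is Hilbert then follows from the Fubini identity $\|T_{X_1}\|=\|T\|_{B(L^2)}$ noted in \S\ref{subsection=regular}.

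The only delicate point is the identification of $L^2(\Omega;X_\theta)$ with $\mathcal C_\theta(L^2(\Omega;X_0),L^2(\Omega;X_1))$, but this is assumed as the defining property (\ref{item=commute_avec_L2}) of the interpolation functor $\mathcal C_\theta$, so no real obstacle remains; the rest is bookkeeping. I would not write out the subspace/quotient/ultraproduct verifications in full since they are routine.
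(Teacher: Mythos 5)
Your argument is correct and is exactly the standard one the paper has in mind: the lemma is stated there without proof, as a summary of the routine facts that $T\otimes\mathrm{id}$ restricts to subspaces, passes to quotients via $L^2(\Omega;X/Y)\cong L^2(\Omega;X)/L^2(\Omega;Y)$, intertwines with the ultraproduct embedding $L^2(\Omega;\prod_{\mathcal U}X_i)\hookrightarrow\prod_{\mathcal U}L^2(\Omega;X_i)$, and interpolates by exactness of $\mathcal C_\theta$ combined with the regular-operator bound $\|T_{X_0}\|\leq\|T\|_{B_r(L^2)}$ and the Hilbertian identity $\|T_{X_1}\|=\|T\|_{B(L^2)}$. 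One small refinement: to obtain the inequality exactly as stated (with no extra multiplicative constant) you should invoke the isometric identification $[L^2(\Omega;X_0),L^2(\Omega;X_1)]_\theta=L^2(\Omega;X_\theta)$ recalled in \S\ref{subsection=interpolation} for the complex method, rather than the merely equivalent-norm form of Remark \ref{rem=real_interpolation}, which would only give the bound up to a constant (harmless for the applications, but weaker than the statement).
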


The other direction is an impressive result, but for our purposes here
it is a negative result. We will not use it.

As Lafforgue and Pisier noticed, Lemma \ref{lemma=stability_under_subquotients_ultrapr} implies (since $\|T_0 - T_\delta\|_{B_r(L^2)} \leq \|T_0\|_{B_r(L^2)} + \|T_\delta\|_{B_r(L^2)}=2$) that if $X$ is (strictly) $\theta$-Hilbertian, there exists $C \in \R_+$ such that for all $\delta \in [-1,1]$
\[ \|(T_0 - T_\delta)_X\|_{B(L^2(K;X))} \leq C |\delta|^{\theta/2}.\]
\subsection{The class $\mathcal E_r$}
We now give the definition of the class $\mathcal E_r$. Given a Banach space $X$, one can get new Banach spaces with the following operations~: (a) taking a subspace of $X$, (b) taking a quotient of $X$, (c) taking an ultrapower of $X$ and (d) taking $[X_0,X_1]_\theta$ for some compatible couple $(X_0,X_1)$ with $X_1$ isomorphic to $X$, and $\theta \in (0,1)$.
\begin{dfn} For $2<r<\infty$, $\mathcal E_r$ is the smallest class of Banach spaces that is closed under these operations, and that contains all spaces with type $p$ and cotype $q$ satisfying $1/p-1/q < 1/r$.
\end{dfn}

Since Hilbert spaces have type $2$ and cotype $2$, all $\mathcal E_r$ contain the strictly $\theta$-Hilbertian spaces. It follows from the properties of type and cotype that $\mathcal E_r$ is made of spaces with nontrivial type. We see no obstruction for this class to contain all spaces of nontrivial type (this is related to the question asked on the bottom of \cite[page 279]{MR555306}). In \cite{MR907695}, for every $q>2$, Banach spaces were constructed that are not superreflexive but have type $2$ and cotype $q$. Hence each $\mathcal E_r$ contains non superreflexive spaces.

By Remark \ref{rem=real_interpolation}, if we also allowed the real interpolation $[ \cdot,\cdot]_{\theta,p}$ for $\theta \in (0,1]$ and $1<p<\infty$ in the definition of $\mathcal E_r$, we would get a possibly larger class of Banach spaces that still satisfy our main Theorems \ref{thm=T_renforce_Banachique_SL3_vague} and \ref{thm=quantitative_howe_moore_vague}.

\subsection{Bounds on $\|T_X\|$ for $T \in S^p$ and $X$ with good type and cotype} \label{subsec=onestepfurther}
We are now ready to exploit Lemma  \ref{lemma=T_delta_in_Sp}. For this we prove the following Proposition which, together with Lemma \ref{lemma=T_delta_in_Sp} and \ref{lemma=stability_under_subquotients_ultrapr}, clearly implies Theorem \ref{thm=main_result_Banach} when $\delta \in [-1/2,1/2]$. Theorem  \ref{thm=main_result_Banach} is obvious when $|\delta|\geq 1/2$, since $\|(T_0 - T_\delta)_X\| \leq 2$ for every $X$. 

\begin{prop}\label{prop=lien_entre_typecotype_Schatten} Let $p \in [1,2], q \in [2,\infty]$ and $r \in [2,\infty)$ satisfying $1/p-1/q<1/r$. There is a constant $C \in \R_+$ such that the following holds. If $X$ a Banach space with type $p$ and cotype $q$, $(\Omega,\mu)$ is a measure space, $T:L^2(\Omega,\mu) \to L^2(\Omega,\mu)$ a bounded operator such that $T \in S^r$, then
\begin{equation}\label{eq=lien_TX_Schatten} \|T_X\|_{B(L^2(\Omega,\mu;X))} \leq C T_p(X) C_q(X) \|T\|_{S^r}.\end{equation}
\end{prop}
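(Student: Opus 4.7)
The plan is to use the singular value decomposition of $T$ and reduce the bound on $\|T_X\|$ to a pair of orthonormal-system analogues of the defining inequalities for type and cotype, combined via H\"older's inequality on the singular values.

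First, I would write $T = \sum_n s_n\, f_n \otimes \overline{e_n}$ where $(e_n)_n, (f_n)_n$ are orthonormal sequences in $L^2(\Omega,\mu)$ and $(s_n)_n$ are the singular values of $T$, satisfying $\|(s_n)\|_{\ell^r} = \|T\|_{S^r}$. For $g \in L^2(\Omega,\mu;X)$ set $y_n = \int_\Omega \overline{e_n(\omega)}\, g(\omega)\, d\mu(\omega) \in X$; then $T_X g = \sum_n s_n y_n f_n$, and the problem reduces to estimating this sum in $L^2(\Omega,\mu;X)$ in terms of $\|g\|_{L^2(X)}$ and $\|T\|_{S^r}$.

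The key technical inputs, which constitute the heart of the argument and rely on the classical work \cite{MR576647}, are two estimates for arbitrary orthonormal systems in $L^2$. On the cotype side, if $X$ has cotype $q$, then for every orthonormal $(e_n)$ and $g \in L^2(X)$ one expects
\[\Bigl(\sum_n \|y_n\|_X^q\Bigr)^{1/q} \lesssim C_q(X)\, \|g\|_{L^2(X)}, \qquad y_n = \int \overline{e_n}\, g.\]
On the type side, if $X$ has type $p$, then for every orthonormal $(f_n)$ and sequence $(z_n)$ in $X$ one expects
\[\Bigl\|\sum_n z_n f_n\Bigr\|_{L^2(X)} \lesssim T_p(X)\, \Bigl(\sum_n \|z_n\|_X^p\Bigr)^{1/p}.\]
Neither estimate is formal from the definitions of type and cotype, which involve only gaussian or Rademacher systems; each requires a genuine comparison between an arbitrary orthonormal system and a gaussian one.

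Assuming both, apply the type estimate to $z_n = s_n y_n$ and then H\"older's inequality with exponents $r/p$ and its conjugate:
\[\|T_X g\|_{L^2(X)} \lesssim T_p(X) \Bigl(\sum_n s_n^p \|y_n\|^p\Bigr)^{1/p} \leq T_p(X)\, \|T\|_{S^r}\, \Bigl(\sum_n \|y_n\|^u\Bigr)^{1/u},\]
where $u = pr/(r-p)$, i.e.\ $1/u = 1/p - 1/r$. The hypothesis $1/p - 1/q < 1/r$ is exactly equivalent to $u > q$, so $\ell^q \hookrightarrow \ell^u$ contractively on finite sequences; combined with the cotype estimate this yields \eqref{eq=lien_TX_Schatten}.

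The main obstacle is to establish the two orthonormal-system inequalities above. The definitions of type and cotype are phrased using gaussian (or Rademacher) variables, and extending them to arbitrary orthonormal systems in $L^2$ is non-trivial and is where \cite{MR576647} enters. The author's remark in \S \ref{subsec=onestepfurther} that these tools do not seem sufficient for every Banach space of nontrivial type suggests precisely that these inequalities are the tight technical bottleneck, and that any slack (possibly involving Lorentz refinements) is exactly what the strict inequality $1/p - 1/q < 1/r$ is designed to absorb.
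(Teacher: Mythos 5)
Your SVD reduction and the final H\"older step are fine, but the two ``orthonormal-system analogues'' of type and cotype on which the whole argument rests are not just non-formal consequences of the definitions: they are false, and no constant depending only on $T_p(X)$ (respectively $C_q(X)$) can make them true. Take $X=\ell^q_N$ with $q>2$ fixed, so that $X$ has type $2$ with $T_2(X)\lesssim\sqrt q$ uniformly in $N$; take for $(f_n)$ the trigonometric system $f_n(t)=e^{int}$, $1\le n\le N$, on the circle with normalized measure, and for $(z_n)$ the normalized discrete Fourier characters $z_n(k)=N^{-1/2}e^{2\pi ink/N}$. Then $\bigl(\sum_n\|z_n\|_q^2\bigr)^{1/2}=N^{1/q}$, whereas for all $t$ in a set of measure at least $1/2$ one coordinate of $\sum_n z_ne^{int}$ is a Dirichlet-kernel peak of size $\gtrsim N^{1/2}$, so $\bigl\|\sum_n z_nf_n\bigr\|_{L^2(X)}\ge \bigl\| \,\|\textstyle\sum_n z_ne^{int}\|_{\ell^\infty_N}\bigr\|_{L^2(dt)}\gtrsim N^{1/2}\gg\sqrt q\,N^{1/q}$. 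Hence your ``type side'' inequality fails (the best constant grows like $N^{1/2-1/q}$), and by the same computation in $\ell^{q'}_N$ (which has cotype $2$) the ``cotype side'' inequality fails as well. This is precisely the point of the hypothesis $1/p-1/q<1/r$: what is true is a bound on $\|u\otimes\mathrm{Id}_X\|$ for \emph{rank-$n$} contractions $u$ in terms of the \emph{product} $T_p(X)C_q(X)$ and the factor $n^{1/p-1/q}$, and this cannot be decoupled into a synthesis estimate governed by type alone and an analysis estimate governed by cotype alone. A further warning sign in your write-up is that your argument only uses $u\ge q$, i.e.\ it would prove \eqref{eq=lien_TX_Schatten} under the weaker hypothesis $1/p-1/q\le 1/r$; the strict inequality does real work in the correct proof.

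The actual proof keeps type and cotype coupled through the rank: one introduces $e_n(X)=\sup\|u\otimes\mathrm{Id}_X\|$ over norm-one operators $u$ of rank at most $n$, invokes the K\"onig--Retherford--Tomczak-Jaegermann estimate $e_n(X)\le T_p(X)C_q(X)n^{1/p-1/q}$ (Theorem \ref{thm=KRTJ}; this, not your two lemmas, is the genuine input from \cite{MR576647} and \cite{MR993774}), and decomposes $T\in S^r$ dyadically by rank as $T=\sum_k\alpha_ku_k$ with $\mathrm{rank}(u_k)\le 2^k$, $\|u_k\|\le1$ and $\sum_k2^k|\alpha_k|^r\le2\|T\|_{S^r}^r$ (Lemma \ref{lemma=decomposition}, obtained by grouping singular values in blocks $2^k\le n<2^{k+1}$). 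Then $\|T_X\|\le\sum_k|\alpha_k|e_{2^k}(X)$, and H\"older with exponents $(r,r')$ produces the geometric series $\sum_k2^{r'(1/p-1/q-1/r)k}$, which converges exactly because $1/p-1/q<1/r$. If you want to salvage your approach, replace your two orthonormal-system lemmas by this single coupled estimate on rank-$n$ contractions; the rest of your H\"older bookkeeping then goes through essentially as in the paper.
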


Let us recall some notation from \cite{MR557370}~: for a Banach space $X$ and an integer
$n$, denote
\[e_n(X)= \sup_u \|u_X\|=\| u \otimes id_X\|_{\ell^2(\N; X) \to \ell^2(\N; X)}\]
where the supremum is over all linear maps $u:\ell^2 \to \ell^2$ of norm $1$ and rank at most $n$. By an easy discretization argument, we have $\|u_X\| \leq e_n(X)$ for every measure space $(\Omega,\mu)$ and every linear maps $u:L^2(\Omega,\mu) \to L^2(\Omega,\mu)$ of norm $1$ and rank at most $n$.

The behaviour of $e_n(X)$ as $n \to \infty$ reflects the geometry of the Banach space $X$. The inequality $e_n(X) \leq \sqrt n$ holds for all Banach spaces. On the opposite, $\sup_n e_n(X)<\infty$ if and only of $X$ is isomorphic to a Hilbert space. $X$ has nontrivial type if and only if $e_n(X) = o(\sqrt n)$. However, given $X$ with nontrivial type, it is not known whether $e_n(X) =O(n^\alpha)$ for some $\alpha<1/2$ (see the problem addressed page 9 of \cite{MR557370}). 

For a rank $n$ operator $u$ on a Hilbert space,
$\|u\|_{S^r}\leq n^{1/r} \|u\|$. It is therefore immediate that if $X$ satisfies \eqref{eq=lien_TX_Schatten} for all $T$, then $e_n(X)\lesssim_X
n^{1/r}$. The converse is almost true, as explained to me by Pisier~: if $e_n(X)\leq C n^{1/r-\varepsilon}$ for some $\varepsilon>0$, then \eqref{eq=lien_TX_Schatten} holds (see the proof of Proposition \ref{prop=lien_entre_typecotype_Schatten}).

We will need the following result of K\"onig, Retherford, Tomczak-Jaegermann.
\begin{thm}\cite[Proposition 27.4]{MR993774} \label{thm=KRTJ}
Let $X$ be a Banach space with type $p$ and cotype $q$. Then 
\[ e_n(X) \leq  T_p(X) C_q(Y) n^{1/p-1/q}.\]
\end{thm}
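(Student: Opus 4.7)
The plan is to bound the Banach tensorization of a rank $n$ operator by a clever factorization through the finite-dimensional space $X^n$, equipped successively with two different Bochner-type sequence norms, with an interpolation step in the middle that picks up the factor $n^{1/p-1/q}$.

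Concretely, let $u\colon L^2(\mu) \to L^2(\mu)$ have rank $\leq n$ and norm $\leq 1$, and write the singular value decomposition $u = \sum_{i=1}^n \sigma_i f_i \otimes e_i^*$ with $0<\sigma_i \leq 1$ and $(e_i),(f_i)$ finite orthonormal systems in $L^2(\mu)$. The tensorization $u_X$ on $L^2(\mu;X)$ factors as
\[ L^2(\mu;X) \xrightarrow{A} (X^n, \ell_q) \xrightarrow{\Sigma} (X^n, \ell_p) \xrightarrow{B} L^2(\mu;X),\]
where $A(\xi) = (\langle e_i,\xi\rangle)_i$ (Bochner integral against $\overline{e_i}$), $\Sigma(y_i)=(\sigma_i y_i)$, and $B((y_i)) = \sum_i y_i f_i$, while $(X^n,\ell_p)$ and $(X^n,\ell_q)$ denote $X^n$ with the norms $\|(y_i)\|_p = (\sum \|y_i\|^p)^{1/p}$ and $\|(y_i)\|_q$ respectively. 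The middle step is immediate: since $\sigma_i \leq 1$, Hölder's inequality gives $\|\Sigma\|_{(X^n,\ell_q)\to(X^n,\ell_p)} \leq n^{1/p-1/q}$.

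It then remains to show $\|A\| \leq K \cdot C_q(X)$ and $\|B\| \leq K \cdot T_p(X)$ for some universal $K$. For $B$, the cotype-dual form of type yields, for i.i.d.\ standard gaussians $(g_i)$, the inequality $\|\sum g_i y_i\|_{L^2(X)} \leq T_p(X) (\sum \|y_i\|^p)^{1/p}$, so that the needed estimate amounts to controlling $\|\sum f_i y_i\|_{L^2(\mu;X)}$ by the gaussian quantity $\|\sum g_i y_i\|_{L^2(X)}$. Symmetrically, for $A$ the definition of cotype gives $(\sum \|x_i\|^q)^{1/q}\leq C_q(X)\|\sum g_i x_i\|_{L^2(X)}$ and one needs to compare the gaussian average, with $x_i=\langle e_i,\xi\rangle$, to $\|\xi\|_{L^2(\mu;X)}$.

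The main obstacle, and where the argument is most delicate, is precisely this comparison principle between a general finite orthonormal system $(\varphi_i)\subset L^2(\mu)$ and i.i.d.\ gaussians at the level of vector-valued $L^2$-norms: the naive pointwise Cauchy–Schwarz estimate produces a spurious factor of $\sqrt{n}$ and ruins the bound. The resolution is to exploit that every $n$-dimensional subspace of $L^2$ is linearly isometric, and that the gaussian system is, up to a universal constant, the \emph{extremal} orthonormal system for Banach-valued norms in the sense of Marcus–Pisier: for any orthonormal $(\varphi_i)\subset L^2(\mu)$ and any Banach space $X$, one has the two-sided comparison
\[ \|\textstyle\sum_i \varphi_i y_i\|_{L^2(\mu;X)} \lesssim \|\sum_i g_i y_i\|_{L^2(X)},\]
with a universal constant independent of $n$, and similarly in the dual direction after projecting onto $\operatorname{span}(\varphi_i)$. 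Feeding this into the two outer steps and combining with the Hölder middle step yields the announced bound $\|u_X\| \leq T_p(X)\,C_q(X)\,n^{1/p-1/q}$, up to a universal multiplicative constant that can then be absorbed if one works with the sharp gaussian type/cotype constants.
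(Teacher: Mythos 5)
Your factorization through $(X^n,\ell_q)\xrightarrow{\Sigma}(X^n,\ell_p)$ with the Hölder estimate $n^{1/p-1/q}$ in the middle is the right place for the exponent to appear, but the two outer estimates are not true, and the ``comparison principle'' you invoke to prove them is false as stated. There is no universal constant $K$ such that $\bigl\|\sum_i \varphi_i y_i\bigr\|_{L^2(\mu;X)}\leq K\bigl\|\sum_i g_i y_i\bigr\|_{L^2(X)}$ for every orthonormal system $(\varphi_i)$ and every Banach space $X$: take $\varphi_i=\sqrt{n}\,1_{[(i-1)/n,i/n)}$ in $L^2[0,1]$, $X=\ell_\infty^n$, $y_i=e_i$; the left side equals $\sqrt{n}$ while the right side is of order $\sqrt{\log n}$. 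The reverse domination (which you need ``in the dual direction'') fails as well, e.g.\ for $X=\ell_1^n$ with the same system, since it would force type $2$. Gaussians are not extremal among orthonormal systems for vector-valued $L^2$ norms; the Marcus--Pisier comparison results require extra hypotheses (uniformly bounded systems, together with an extra random sign/Gaussian randomization), and cannot be used here where the SVD produces completely arbitrary orthonormal systems $(e_i),(f_i)$.

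Moreover the difficulty is not only in your justification: the outer bounds themselves are false, so the strategy of charging $A$ to the cotype constant alone and $B$ to the type constant alone cannot be repaired. Take $X=\ell_1^n$, which has cotype $2$ with a universal constant, $\mu$ the uniform probability on $\Z_n$, $e_i(x)=e^{2\pi i\,ix/n}$ the character system, and $\xi(x)=\sqrt{n}\,e_x$. Then $\|\xi\|_{L^2(\mu;\ell_1^n)}=\sqrt n$ while $\bigl(\sum_i\|\langle e_i,\xi\rangle\|_{\ell_1}^2\bigr)^{1/2}=n$, so $\|A\|\geq\sqrt n$ although $C_2(X)\lesssim 1$ (this is just the failure of Fourier type for $\ell_1$; by Kwapie\'n's theorem, boundedness of the coefficient map into $\ell_2^n(X)$ for all orthonormal systems characterizes Hilbertian spaces). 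The dual example breaks the bound for $B$ by $T_p(X)$. Note that the paper does not prove this statement but quotes it from Tomczak-Jaegermann's book (Proposition 27.4); the known proofs never compare a general orthonormal system with Gaussians. Instead they stay inside the Gaussian world, using the rotational invariance of the Gaussian vector (the $\ell$-norm of a finite rank operator does not depend on the chosen orthonormal basis) and pairing the two sides through a single Gaussian sum, so that the type and cotype inequalities are only ever applied to Gaussian averages and are used jointly; the factor $n^{1/p-1/q}$ then comes from Hölder on the singular values, as in your middle step. As written, your argument has a genuine gap at its central lemma, and the two halves of your factorization cannot be bounded separately by any function of $T_p(X)$ and $C_q(X)$ alone.
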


For the proof of Proposition \ref{prop=lien_entre_typecotype_Schatten} we will also use the following 
\begin{lemma}\label{lemma=decomposition}
Let $H$ be a Hilbert space. Every $T \in S^r$ can be decomposed $T = \sum_{k \geq 0} \alpha_k u_k$ (convergence in the norm topology of $B(H)$)
where $u_k$ have rank at most $2^k$ and norm at most $1$, and $\sum_{k
  \geq 0} 2^k |\alpha_k|^r \leq 2 \|T\|_{S^r}^r$.
\end{lemma}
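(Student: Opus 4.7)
The plan is to decompose $T$ via its singular value decomposition and package the singular values dyadically. Since $T \in S^r \subset S^\infty$, write the canonical SVD
\[ T = \sum_{n \geq 1} s_n(T)\, \langle \cdot, e_n\rangle f_n, \]
where $s_1(T) \geq s_2(T) \geq \dots \geq 0$ are the singular values and $(e_n)$, $(f_n)$ are orthonormal systems in $H$. Partition the indices into dyadic blocks $B_k = \{n : 2^k \leq n < 2^{k+1}\}$ for $k \geq 0$, each of cardinality $2^k$, and define
\[ S_k = \sum_{n \in B_k} s_n(T)\, \langle \cdot, e_n\rangle f_n, \qquad \alpha_k = \|S_k\|_{B(H)} = s_{2^k}(T), \qquad u_k = S_k/\alpha_k \]
(setting $u_k = 0$ whenever $\alpha_k = 0$). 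By construction $u_k$ has rank at most $2^k$ and operator norm exactly $1$, and $T = \sum_{k \geq 0} \alpha_k u_k$ with convergence in the operator norm (in fact in $S^r$), since $s_{2^k}(T) \to 0$ and the tails are bounded by $\max_{n \geq 2^{k+1}} s_n$.

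It remains to estimate $\sum_{k \geq 0} 2^k \alpha_k^r = \sum_{k \geq 0} 2^k s_{2^k}(T)^r$. For $k \geq 1$ and any $n \in B_{k-1}$ (a set of size $2^{k-1}$) the monotonicity $s_n(T) \geq s_{2^k}(T)$ gives
\[ 2^{k-1}\, \alpha_k^r \leq \sum_{n \in B_{k-1}} s_n(T)^r, \]
hence $2^k \alpha_k^r \leq 2 \sum_{n \in B_{k-1}} s_n(T)^r$. Summing over $k \geq 1$ telescopes to
\[ \sum_{k \geq 1} 2^k \alpha_k^r \leq 2 \sum_{k \geq 0} \sum_{n \in B_k} s_n(T)^r = 2 \|T\|_{S^r}^r. \]
The only residual issue is the $k = 0$ term $\alpha_0^r = s_1(T)^r$, which by itself only satisfies $s_1(T)^r \leq \|T\|_{S^r}^r$ and would push the constant to $3$. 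To recover the factor $2$ of the statement, one repartitions near the top of the spectrum, e.g.\ by setting $u_0 = 0$ (so $\alpha_0 = 0$) and absorbing $s_1(T)$ into the rank-$2$ operator $u_1$ (which still has norm at most $1$ and rank at most $2 = 2^1$); the book-keeping above then yields $\sum_k 2^k \alpha_k^r \leq 2 \|T\|_{S^r}^r$.

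No step presents a real obstacle; the argument is a standard dyadic resampling of the spectrum. The only item requiring some care is that minor reindexing of the top block, since a naive dyadic packaging costs an extra factor coming from the top singular value.
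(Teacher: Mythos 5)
Your construction is exactly the one the paper uses: the (positive part of the) singular value decomposition grouped into dyadic blocks $B_k$ with $\alpha_k = s_{2^k}(T)$ and $u_k$ the normalized restriction to the block. So the approach is the same. The only place where you depart from a clean argument is the final accounting, and your proposed patch there is not quite right as stated.

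The difficulty you correctly identify is that comparing $2^{k-1}\alpha_k^r$ against $\sum_{n\in B_{k-1}} s_n^r$ double-counts the top singular value: summing over $k\ge 1$ covers all of $\{1,2,\dots\}$ and then the $k=0$ term adds $s_1^r$ on top, giving $3\|T\|_{S^r}^r$. However, the remedy you sketch --- ``$u_0=0$ and absorb $s_1$ into $u_1$, which still has rank at most $2$'' --- does not literally work: $u_1$ would then have to carry the indices $1,2,3$, which is rank $3>2^1$; making it work requires shifting \emph{all} the blocks down by one index, which you do not spell out. The cleaner route is to keep your original blocks and operators untouched and only shift the \emph{comparison} window in the estimate. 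For $k\ge 1$ compare against $\{2^{k-1}+1,\dots,2^k\}$ rather than against $B_{k-1}=\{2^{k-1},\dots,2^k-1\}$: this set also has $2^{k-1}$ elements, each with $s_n\ge s_{2^k}$, so $2^k\alpha_k^r \le 2\sum_{n=2^{k-1}+1}^{2^k} s_n^r$. These windows are disjoint and their union over $k\ge 1$ is $\{2,3,\dots\}$, so
\[
\sum_{k\ge 0} 2^k\alpha_k^r \ \le\ s_1^r + 2\sum_{n\ge 2} s_n^r \ \le\ 2\sum_{n\ge 1} s_n^r = 2\|T\|_{S^r}^r,
\]
with no reindexing of the $u_k$ at all. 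With this adjustment the proof is complete and matches the paper's.
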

\begin{proof}
Writing $T=U|T|$ the polar decomposition of $T$, we can assume that
$T>0$. We can therefore write $T=\sum_{n \geq 1} \lambda_n p_n$ where
$(p_n)$ is a familiy of rank one orthogonal projections, $(\lambda_n)$
is a nonincreasing sequence in $\ell^p$ and
$\|(\lambda_n)_n\|_{\ell^p} = \|T\|_{S^p}$. Take $\alpha_k =
\lambda_{2^k}$ for all $k \geq 0$ and $\alpha_k
u_k=\sum_{n=2^k}^{2^{k+1}-1} \lambda_n p_n$.
\end{proof}

\begin{proof}[Proof of Proposition \ref{prop=lien_entre_typecotype_Schatten}]
Let $T \in S^r$. Write $T = \sum_k \alpha_k u_k$ as in Lemma
\ref{lemma=decomposition}. If $r'$ is the conjugate exponent of
$r$~: $1/r+1/r'=1$, using Theorem \ref{thm=KRTJ}, H\"older's inequality, and the assumption $1/p-1/q-1/r<0$,
\begin{eqnarray*} \|T_X\| &\leq& \sum_k |\alpha_k| e_{2^k}(X)  \\
& \leq & T_p(X) C_q(X) \sum_{k \geq 0} |\alpha_k| 2^{k(1/p-1/q)}\\
& \leq &  T_p(X) C_q(X) \left(\sum_k 2^k |\alpha_k|^r\right)^{1/r} \left(\sum_k 2^{r'(1/p - 1/q-1/r)k} \right)^{1/r'} \\ 
& \lesssim_{p,q,r} & \|T\|_{S^r}.\qedhere
\end{eqnarray*}
\end{proof}

\section{Lafforgue's approach to rigidity of $\SL(3,\R)$}\label{section=Lafforgue_theorem}

This section presents in details the approach to rigidity of actions of $\SL(3,\R)$ developped in \cite{MR2423763}. It is mainly expository but also contains some new results.

Throughout this section we make the following assumption~:

{\bf Assumption.} $X$ is a Banach space, $C \in \R_+$, $s \in (0,1/2]$ satisfy
\begin{equation}\label{eq=assumption_on_X}
\|(T_0 - T_\delta)_X\| = \| (T_0 - T_\delta) \otimes id_X\|_{L^2(K; X)
  \to L^2(K; X)} \leq C |\delta|^s \ \forall \delta \in [-1,1].\end{equation}
The operators $T_\delta$ on $L^2(K)$ were defined by \eqref{eq=def_of_Tdelta}.

Lafforgue proved (unpublished) that this assumption implies that $\SL(3,\R)$ has strong (T) with respect to $X$. We start by giving a precise statement of this in Theorem \ref{thm=T_renforce_Banachique_SL3_precis}, and also a precise statement of the quantitative Banach-space valued Howe-Moore property in Theorem \ref{thm=quantitative_howe_moore_precis}. Together with Theorem \ref{thm=main_result_Banach} which proves that Banach spaces in the class $\mathcal E_4$ satisfy \eqref{eq=assumption_on_X}, they imply Theorem \ref{thm=T_renforce_Banachique_SL3_vague}  and \ref{thm=quantitative_howe_moore_vague}. We then provide a detailed proof of these results. The proof of Theorem \ref{thm=T_renforce_Banachique_SL3_precis} was not written explicitely by Lafforgue, but it is an adaptation of the proof in the case of Hilbert spaces in \cite[Th\'eor\`eme 2.1]{MR2423763}. We therefore do not claim much novelty. There are however some differences between the original proof and the one presented here, mainly in subsection \ref{subsection=harmonic_analysis} and subsection \ref{subsection=non_Kinvariant_coeffs}. In this last subsection the same proof as in \cite{MR2423763} would have applied for $V_1 \neq V_2^*$ (which is enough for Theorem \ref{thm=T_renforce_Banachique_SL3_precis}), but in order to treat in the same way the case $V_1 = V_2^*$ and get the Howe-Moore property, we give a slightly different proof.

\subsection{Statements}\label{subsection=statements}
We denote $G=\SL(3,\R)$. $K = \SO(3,\R)$ is a maximal compact subgroup. Denote by $\ell$ the length function on $G$ given by \[\ell(g) = \max(\log \|g\|,\log \|g^{-1}\|)\] where the norm is taken as a linear operator on euclidean $\R^3$. This length function is proper, invariant under multiplication on the left or on the right by elements of $K$ and is geodesic in the following sense: for $0 \leq \lambda \leq 1$, every $g \in G$ can be written $g= g_1 g_2$ with $\ell(g_1) = \lambda \ell(g)$ and $\ell(g_2) = (1-\lambda) \ell(g)$. This property implies that every other length function $\ell'$ on $G$ is dominated by $\ell$ in the sense that $\ell' \leq a \ell + b$ for some $a,b \in \R_+$. The precise choice of the length function is not very important; in \cite{MR2423763} Lafforgue works with $\log \|g\| + \log \|g^{-1}\|$, which is more natural with respect to \cite{MR1905394}. Our choice gives more precise bounds and is more natural in view of the proofs below, in which one is led to work on $\{g \in K \diag(e^r,e^s,e^t)K, r\geq s \geq t, \max(r,-t)=\alpha\}$. For our choice of $\ell$, this set coincides with the sphere $\{g, \ell(g)=\alpha\}$. 

For a continuous representation $\pi$ of $G$ on $X$, we will often consider the following condition~:
\begin{equation}\label{eq=small_exponential_growth} 
\exists L \in \R, t<s/2 \textrm{ such that }\| \pi(g)\| \leq L
e^{t \ell(g)}\  \forall g \in G.\end{equation}

We will prove the following two theorems.
\begin{thm} \label{thm=T_renforce_Banachique_SL3_precis} Under Assumption \eqref{eq=assumption_on_X} on the Banach space $X$, let $\pi:G \to B(X)$ be a continuous representation satisfying \eqref{eq=small_exponential_growth}. Then $\pi(KgK)$ converges as $g \to \infty$ in the norm topology of $B(X)$ to a
projection $P$ on the subspace $X^G$ of $\pi(G)$-invariant vectors. Moreover there is a constant $C'$ such that 
\[ \| P - \pi(KgK)\|_{B(X)} \leq C' e^{-(s-2t)\ell(g)} \textrm{ for all $g \in G$.}\]
\end{thm}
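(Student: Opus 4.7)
The plan is to follow Lafforgue's two-step approach sketched in the introduction (analytical harmonic-analysis input + combinatorial exploration), upgrading his $\SO(3)$-spherical H\"older estimate from the Hilbert case to the Banach case via \eqref{eq=assumption_on_X} and Proposition~\ref{prop=all_reduces_to_regular}. Since $\ell$ vanishes on $K$, the growth bound yields $\sup_{k\in K}\|\pi(k)\|\leq L$; by renorming $X$ with $\|\xi\|_1=\sup_{k\in K}\|\pi(k)\xi\|$ one may assume $\pi|_K$ is isometric, at the cost of universal multiplicative constants, and \eqref{eq=assumption_on_X} survives this renorming with a comparable constant since the new norm on $L^2(K;X)$ is equivalent to the old.

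The analytical input is the following $U$-biinvariant H\"older estimate. Let $D$ lie in the centralizer of $U$ in $G$, concretely $D=\mathrm{diag}(a,b,b)$ with $ab^2=1$, and take $\xi\in X$, $\eta\in X^{\ast}$ with $\pi(K)\xi=\xi$ and $\tr\pi(K)\eta=\eta$. Setting $\xi'=\pi(D)\xi$ and $\eta'=\pi(D)^{\ast}\eta$, one has
\[ c(x):=\langle\pi(DxD)\xi,\eta\rangle=\langle\pi(x)\xi',\eta'\rangle,\qquad x\in K,\]
and $\xi',\eta'$ are $U$-invariant because $D$ commutes with $U$, so $c$ is a $U$-biinvariant matrix coefficient of $\pi|_K$. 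Proposition~\ref{prop=all_reduces_to_regular} applied to the signed measure whose $\lambda$-image is $T_0-T_\delta$ then gives, for $x\in K$ with $x_{1,1}=\delta$,
\[ |c(x_0)-c(x)|\leq \|(T_0-T_\delta)_X\|\,\|\xi'\|\,\|\eta'\|\lesssim|\delta|^{s}\,e^{2t\ell(D)}\,\|\xi\|\|\eta\|,\]
which is the ``cost of one elementary jump''; the factor $e^{2t\ell(D)}$ comes from the two $\pi(D)$'s used to move the $K$-invariant vectors, and it is this square that forces the restriction $t<s/2$.

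Next comes the combinatorial exploration, which is the technical heart of \cite{MR2423763}. By $K$-biinvariance one reduces to controlling $\langle\pi(g)\xi_K,\eta_K\rangle-\langle\pi(g')\xi_K,\eta_K\rangle$ for $g,g'$ in the positive Weyl chamber $A^{+}$ with $\min(\ell(g),\ell(g'))\geq\alpha$ and $\xi_K=\pi(K)\xi$, $\eta_K=\tr\pi(K)\eta$. As will be developed in \S\ref{subsection=first_step}, one links any such pair by a chain of elementary jumps $\pi(D_i x_0 D_i)\leadsto\pi(D_i x_i D_i)$ with $(D_i,\delta_i)$ arranged so that jumps at scale $\ell(D_i)\approx r$ involve $|\delta_i|\lesssim e^{-r}$; summing the H\"older estimate over the chain yields
\[ \sum_i|\delta_i|^{s}e^{2t\ell(D_i)}\lesssim \sum_{r\geq\alpha/2}e^{-(s-2t)r}\lesssim e^{-(s-2t)\alpha},\]
the assumption $t<s/2$ being exactly what makes the geometric series converge. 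Taking suprema over unit $\xi,\eta$ then shows $g\mapsto\pi(KgK)$ is Cauchy in $B(X)$ as $\ell(g)\to\infty$, with a limit $P$ satisfying $\|P-\pi(KgK)\|\lesssim e^{-(s-2t)\ell(g)}$.

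Once convergence is established, the identification of $P$ with a projection onto $X^G$ is the easy last step alluded to in the introduction: $P$ clearly fixes $X^G$ pointwise, and from $\pi(KgK)\pi(Kg_0K)=\pi(KgKg_0K)$ together with the uniform bound $\|\pi(Kh K)-P\|\lesssim e^{-(s-2t)\ell(h)}$ and the elementary fact $\ell(gg_0)=\ell(g)+O(1)$, passing to the limit $g\to\infty$ yields $P\pi(Kg_0K)=P$; a further averaging then gives $PX\subset X^G$. The main obstacle in the whole argument is the combinatorial step: producing an explicit exploration of $K\backslash G/K$ whose total cost telescopes to $e^{-(s-2t)\ell(g)}$, exploiting the rank-two geometry of $\SL(3,\R)$ in an essential way; this is the place where the bound $t<s/2$ is used and the part that does not generalise to $\SL(2,\R)$.
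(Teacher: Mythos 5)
Your first two steps (reduction to $\pi|_K$ isometric, the $U$-biinvariant H\"older estimate from \eqref{eq=assumption_on_X} via Proposition \ref{prop=all_reduces_to_regular}, and the exploration of the Weyl chamber with jumps of cost $\lesssim CL^2e^{2t\ell(D)}|\delta|^s$ summing to a geometric series under $t<s/2$) are indeed the paper's first step, and they do yield the convergence of $\pi(KgK)$ to some $P$ with the stated rate, as well as $\pi(Kg)P=P$ and hence $P^2=P$ by the limit argument you indicate (your identity $\pi(KgK)\pi(Kg_0K)=\pi(KgKg_0K)$ should read $\int_K\pi(Kgkg_0K)\,dk$, but the conclusion $P\pi(Kg_0K)=\pi(Kg_0K)P=P$ is correct).

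The genuine gap is your last step: the claim that ``a further averaging then gives $PX\subset X^G$.'' What the relations above give for $\xi=P\xi$ is only $\pi(K)\xi=\xi$ (hence $\xi\in X^K$, by left invariance of the Haar measure) and $\pi(K)\pi(g)\xi=\xi$ for all $g$. In a Hilbert space, or more generally for an isometric representation on a strictly convex space, this forces $\pi(g)\xi=\xi$, since $\xi$ is then an average of vectors of the same norm; but under the hypotheses of the theorem $X$ is an arbitrary space satisfying \eqref{eq=assumption_on_X} and $\pi$ is only exponentially bounded, so no averaging argument can conclude. This is exactly the point the introduction flags as ``no longer easy at all'': to show $PX\subset X^G$ the paper needs the decay of coefficients $\langle\pi(g)\xi,\eta\rangle$ for $K$-invariant $\xi$ and $\eta\in X^*_V$ of \emph{nontrivial} $K$-type $V$ (estimate \eqref{eq=howe_moore_spherical}), together with the Banach Peter--Weyl theorem (Theorem \ref{thm=peterWeyl}) and the weak-* density of $X^*_{finite}$ (Remark \ref{rem=finite_type_on_dual}) to upgrade $\langle\pi(g)\xi,\eta\rangle=\langle\xi,\eta\rangle$ on $X^*_{finite}$ to $\pi(g)\xi=\xi$. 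Proving \eqref{eq=howe_moore_spherical} is the second, substantially harder half of the argument (Proposition \ref{prop=nonspherical_Holder_general} for non-$U$-invariant coefficients, and the two-parameter embeddings $D_\beta x_\delta D_\alpha$ of Lemma \ref{lemma=embedding2} combined with Lemmas \ref{lemma=A_almost_U_invariant} and \ref{eq=a_almost_UandU'_invariant}); your proposal omits it entirely, so the identification of the image of $P$ with $X^G$ is not established.
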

Recall that $\pi(KgK)$ was defined as the operator \[\xi \in X \mapsto \iint_{K \times K} \pi(kgk')\xi  dk dk' \in X.\]
Taking for $m_n$ the uniform measure on $K \diag(e^n,1,e^{-n})K$ (this measure is indeed symmetric), this Theorem implies that $G$ has strong property (T) with respect to the Banach spaces that satisfy \eqref{eq=assumption_on_X}.

\begin{thm} \label{thm=quantitative_howe_moore_precis} Let $V_1,V_2$ be irreducible unitary representations of $K$. There is a constant $C_{V_1,V_2}$ such that the following holds. Under Assumption \eqref{eq=assumption_on_X} on the Banach space $X$, let $\pi:G \to B(X)$ be a continuous representation without invariant vectors satisfying \eqref{eq=small_exponential_growth}, and $\xi \in X_{V_1}$, $\eta \in X^*_{V_2}$. Then

\begin{eqnarray}\label{eq=howe_mooretrivial} |\langle \pi(g) \xi,\eta\rangle| \leq \frac{C_{V_1,V_2} C L^4}{s-2t} \|\xi\| \|\eta\| e^{-(s-2t) \ell(g)} & \textrm{if $V_1=V_2 = 1$}
\\ \label{eq=howe_moorenontrivial} |\langle \pi(g) \xi,\eta\rangle| \leq C_{V_1,V_2} C L^4 \|\xi\| \|\eta\| e^{-(s-2t) \ell(g)} & \textrm{otherwise}.
\end{eqnarray}
\end{thm}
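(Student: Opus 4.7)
The plan is to deduce the theorem from the quantitative convergence of Theorem~\ref{thm=T_renforce_Banachique_SL3_precis} together with the Peter-Weyl machinery of Theorem~\ref{thm=peterWeyl} and the non-spherical H\"older estimate of Proposition~\ref{prop=nonspherical_Holder_general}. I would first dispose of the trivial case $V_1=V_2=\mathbf{1}$. Since vectors in $X_{\mathbf{1}}$ are exactly the $K$-invariant ones (and similarly on the dual side), one has $\langle \pi(g)\xi,\eta\rangle=\langle \pi(KgK)\xi,\eta\rangle$. Because $\pi$ has no invariant vectors the projection $P$ of Theorem~\ref{thm=T_renforce_Banachique_SL3_precis} is zero, so \eqref{eq=howe_mooretrivial} reduces to that theorem; I would then inspect the proof of Theorem~\ref{thm=T_renforce_Banachique_SL3_precis} to check that its constant $C'$ has the claimed form $C L^4/(s-2t)$, where the $L^4$ records the four factors of $\pi$ appearing when one re-expresses $\pi(KgK)-\pi(Kg'K)$ via coefficients of $\pi$ and applies \eqref{eq=U-invariant_coeff_hoelder}, and $1/(s-2t)$ comes from summing a geometric series of jumps along a dyadic sequence of elements of $A^+$.

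For the non-trivial case, the plan is to reduce $c(g):=\langle\pi(g)\xi,\eta\rangle$ to a finite linear combination of ``reduced'' coefficients of $\pi$. Using the identifications $X_{V_1}\cong Y_1\injtens V_1$ and $X^*_{V_2}\cong Y_2\injtens V_2$ supplied by Theorem~\ref{thm=peterWeyl}, after fixing bases of $V_1$ and $V_2$ one writes
\[c(k_2 g k_1)=\sum_{i,j}(\pi_{V_1}(k_1))_i\,(\tr{\pi_{V_2}}(k_2))_j\,c_{ij}(g),\]
where the $c_{ij}$ are coefficients of $\pi$ between vectors in the auxiliary spaces $Y_1,Y_2$ that carry trivial $K$-type information. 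One then repeats Lafforgue's combinatorial exploration of $K\backslash G/K$ which underlies Theorem~\ref{thm=T_renforce_Banachique_SL3_precis}, the difference being that the ``jumps'' on each $KaK$-coset are now estimated via $U$-equivariant (rather than $U$-biinvariant) matrix coefficients of the restriction $\pi|_K$; the analytic input \eqref{eq=U-invariant_coeff_hoelder} is replaced by Proposition~\ref{prop=nonspherical_Holder_general}, with the extra cost absorbed into the $V_1,V_2$-dependent constant $C_{V_1,V_2}$ through the Lipschitz norm of the auxiliary function produced by Lemma~\ref{lemma=existence_of_elementinAK_of_type_chi}. The $1/(s-2t)$ factor does not appear because one now estimates $\pi(g)$ itself on a specific double coset rather than summing a telescoping series.

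The main obstacle, already emphasized in the remarks preceding \S\ref{subsection=statements}, is the subcase where $\mathrm{span}(\pi(K)\xi)$ and $\mathrm{span}(\tr\pi(K)\eta)$ admit a nonzero $K$-equivariant map, i.e., where $V_1$ and $V_2$ share an irreducible summand. There the scalar analogue of the argument in the previous paragraph produces a non-zero ``limiting coefficient'' that cannot be ruled out by a character-orthogonality computation as in the case $V_1\not\cong V_2$. To handle this I would follow the suggestion of subsection~\ref{subsection=non_Kinvariant_coeffs}: upgrade from scalar to operator limits, showing that $\pi(KaK)|_{X_{V_1}}$ converges in norm as $\ell(a)\to\infty$ to a $K$-equivariant map with range consisting of $G$-invariant vectors, hence vanishes since $X^G=0$. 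The same combinatorial machinery yields the explicit rate $e^{-(s-2t)\ell(g)}$, which combined with the Peter-Weyl reduction above gives~\eqref{eq=howe_moorenontrivial}. I expect this last step, namely identifying the operator limit as $K$-equivariant and showing its image is $G$-invariant, to be the technical heart of the argument.
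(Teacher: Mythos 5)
Your treatment of the case $V_1=V_2=\mathbf 1$ and your general set-up for the nontrivial case are essentially the paper's: the paper proves (Proposition \ref{prop=quantitative_howe_moore_partial}) that the $K$-biinvariant coefficient converges with rate $\frac{CL^4}{s-2t}e^{-(s-2t)\ell(g)}$ and that the limit is $\langle P\xi,\eta\rangle=0$, and in the nontrivial case it reduces, via Theorem \ref{thm=peterWeyl}, to estimating the operators $q\circ\pi(g)\circ\iota\in B(V_1,V_2^*)$ directly on the sphere $\ell(g)=\gamma$, using Proposition \ref{prop=nonspherical_Holder_forKU} in place of \eqref{eq=U-invariant_coeff_hoelder} (hence no $1/(s-2t)$).

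The gap is in your plan for the hard subcase $V_1\cong V_2^*$. First, the object you propose to study, $\pi(KaK)|_{X_{V_1}}$, is identically zero when $V_1$ is nontrivial ($\pi(K)$ is the projection onto the trivial isotypic component), so its limit carries no information about $\langle\pi(g)\xi,\eta\rangle$; what one must control is the un-averaged compression $A_\gamma=q\circ\pi(\diag(e^\gamma,1,e^{-\gamma}))\circ\iota$. Second, for such compressions the "convergence plus identification of the limit" strategy has no evident implementation: the estimates coming from the combinatorics only compare $A_\gamma$ with twisted operators $\pi_2(k)A'\pi_1(k')$, where $k,k'\in\widetilde U$ are the $K$-components of a $KAK$ decomposition (Lemma \ref{lemma=embedding2}), so there is no well-defined operator-valued function on $\Lambda$ to which a Cauchy argument applies; moreover the mechanism \eqref{eq=pi(KgP)=P} that lets one prove a limit has $G$-invariant range rests precisely on the $K$-averaging that is absent here, and in the paper the identification of $\lim\pi(KgK)$ with a projection onto $X^G$ is itself \emph{deduced from} the nontrivial-type decay \eqref{eq=howe_moore_spherical}, so your route is circular as stated. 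The paper's actual new ingredient makes no use of $X^G=0$: it bounds $\|A_\gamma\|$ directly by showing $A_\gamma$ is almost invariant under left averaging over both $U$ and $\widetilde U$, which is obtained from the two-parameter embeddings $x\mapsto D_{2\gamma-\alpha}xD_\alpha$ together with an intermediate-value argument choosing $\alpha\in[\gamma,7\gamma/6]$ so that $|1-\chi_1(y'_\alpha)\chi_2(y_\alpha)|=1$ (Lemma \ref{lemma=A_almost_U_invariant}), and then by the elementary fact that no nonzero vector of a nontrivial irreducible representation of $K$ is fixed by both $U$ and $\widetilde U$ (Lemma \ref{eq=a_almost_UandU'_invariant}). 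Without this character-twisting step, or a genuine substitute for it, the case $V_1\cong V_2^*$ remains unproved; note also that a telescoping/limit argument would reintroduce a $1/(s-2t)$ factor that \eqref{eq=howe_moorenontrivial} does not allow.
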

\begin{rem}\label{rem=nonoptimal} If $\pi$ is a unitary representation of $G$ on a Hilbert space, then $s=1/2$ (by the case $p=\infty$ in Lemma \ref{lemma=T_delta_in_Sp}) and $t=0$, so that the right-hand side of \eqref{eq=howe_mooretrivial} and \eqref{eq=howe_moorenontrivial} is $\lesssim e^{-\frac 1 2 \ell(g)}$. This is worse than the optimal decay described in \cite{MR1905394}, which is $\lesssim_\varepsilon e^{-(\frac 1 2 - \varepsilon) (\log \|g\| + \log \|g^{-1}\|)} \lesssim_\varepsilon e^{-(\frac 1 2 - \varepsilon) \frac 3 2 \ell(g)}$ for all $\varepsilon>0$.
\end{rem}

Both results are consequences of the following Proposition. We first deduce Theorem \ref{thm=T_renforce_Banachique_SL3_precis} and \ref{thm=quantitative_howe_moore_precis}, and then prove the Proposition.

\begin{prop}\label{prop=quantitative_howe_moore_partial}
Let $V_1,V_2$ be irreducible unitary representations of $K$. Let $X$ be a Banach space satisfying
\eqref{eq=assumption_on_X}, $\pi$ a representation of $G$ on $X$
satisfying \eqref{eq=small_exponential_growth}, and $\xi\in X_{V_1}$,
$\eta \in X^*_{V_2}$. Then

\begin{itemize}\item If $V_1$ and $V_2$ are both the trivial representation, the coefficient $\langle \pi(g) \xi,\eta\rangle$ has a limit $l$ as $\ell(g) \to \infty$ and 
\begin{equation}\label{eq=howe_moore_equivariant} |\langle \pi(g) \xi,\eta\rangle - l| \lesssim  \frac{C L^4}{s-2t} \|\xi\| \|\eta\| e^{-(s-2t) \ell(g)}.\end{equation}

\item If $V_1$ or $V_2$ is nontrivial,
\begin{equation}\label{eq=howe_moore_spherical} |\langle \pi(g) \xi,\eta \rangle| \lesssim_{V_1,V_2} C L^{4} \|\xi\| \|\eta\| e^{-(s-2t)\ell(g)}.\end{equation}
\end{itemize}
\end{prop}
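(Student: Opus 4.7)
The plan is to adapt Lafforgue's strategy from \cite{MR2423763} to the present Banach-space setting, in four stages. First I would reduce to the case where $\xi$ and $\eta$ transform by fixed characters under $U = \SO(2,\R) \subset K = \SO(3,\R)$. Since $\xi$ and $\eta$ are of finite $K$-type, the finite-dimensional $K$-representations $\mathrm{span}(\pi(K)\xi)$ and $\mathrm{span}(\tr\pi(K)\eta)$ restrict to $U$ as direct sums of characters (Peter--Weyl applied to the torus $U$). Decomposing $\xi$ and $\eta$ into $U$-isotypic components and paying a multiplicative constant of at most $d_{V_1} d_{V_2}$, one reduces to the case where $\pi(u)\xi = \chi_1(u)\xi$ and $\tr\pi(u)\eta = \chi_2(u)\eta$ for some characters $\chi_1,\chi_2$ of $U$.

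Next I set up the key analytical estimate. Pick a diagonal $D \in G$ in the centralizer of $U$, i.e.\ $D = \mathrm{diag}(\alpha,\beta,\beta)$ with $\alpha\beta^2=1$, and consider
\[ c_D(x) := \langle \pi(DxD)\xi,\eta\rangle \]
as a function on $K$. Using $Du = uD$ for $u \in U$, one checks $c_D(u_1 x u_2) = \chi_2(u_1^{-1})\chi_1(u_2) c_D(x)$, so $c_D$ is a $\chi$-equivariant coefficient of the restriction of $\pi$ to $K$, evaluated at $\pi(D)\xi$ and $\pi(D)^*\eta$, each of norm at most $Le^{t\ell(D)}\|\cdot\|$. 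Proposition \ref{prop=nonspherical_Holder_general}, applied to the character $\chi$ (trivial on the stabilizer of an appropriate base-point $x_0$), together with Assumption \eqref{eq=assumption_on_X} controlling $\|(T_0 - T_\delta)_X\|$, then yields a Hölder-type bound
\[ |c_D(x) - c_D(x_0)| \lesssim_{V_1,V_2} C L^2 e^{2t\ell(D)}\|\xi\|\|\eta\|\bigl(d(x,x_0) + |x_{1,1}-(x_0)_{1,1}|^s\bigr).\]

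The third stage is the combinatorial exploration inside $G$. Using the Cartan decomposition $G = KAK$ and the freedom to insert $K$-averages on either side, one would write $\langle \pi(g)\xi,\eta\rangle - \langle \pi(g')\xi,\eta\rangle$, for $g,g'$ of comparable large length, as a telescoping sum along a path of jumps $Dx_0D \mapsto DxD$ where the diagonal $D$ sweeps through the Weyl chamber. Each jump is controlled by the Hölder estimate above, contributing a factor $e^{2t\ell(D)}$, and the combinatorial choice of path is arranged so that the total cost is a geometric series with ratio $e^{-(s-2t)}$, which converges thanks to the hypothesis $t<s/2$. This produces the Cauchy-type bound
\[ |\langle \pi(g)\xi,\eta\rangle - \langle \pi(g')\xi,\eta\rangle| \lesssim \frac{CL^4}{s-2t}\|\xi\|\|\eta\|e^{-(s-2t)\min(\ell(g),\ell(g'))}, \]
whence existence of a limit $l$ as $\ell(g)\to\infty$ with the desired decay (the extra $L^2$ factor, bringing the bound to $L^4$, comes from the endpoints of the path, where $\pi(D)$ is estimated trivially).

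It remains to show that $l = 0$ whenever at least one of $V_1,V_2$ is non-trivial, and this is where the main obstacle lies. When there is no non-zero $K$-equivariant map $V_1 \to V_2^*$ (in particular whenever exactly one of $V_1,V_2$ is trivial, or $V_1 \not\cong V_2^*$), averaging $l$ with respect to $K$ on either side and invoking Schur orthogonality forces $l=0$, as in \cite{MR2423763}. The genuinely new case $V_1 \cong V_2^*$ non-trivial is the main obstacle: here $l$ is a priori a non-zero $K$-intertwiner, and one has to exhibit, via an auxiliary combinatorial argument specific to this situation, two sequences of group elements whose coefficients differ asymptotically by a multiplication by a non-trivial scalar coming from the $U$-equivariance; combined with the Cauchy bound this scalar constraint forces $l=0$. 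Designing this auxiliary exploration so that each jump remains within the scope of the Hölder estimate of Step 2, while producing a non-trivial character relation, is the technically most delicate part of the proof.
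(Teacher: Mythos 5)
Your treatment of the first bullet is essentially the paper's argument (\S\ref{subsection=first_step}): for $K$-bi-invariant data the coefficient factors through $\pi(KgK)$, the jump estimate along $D_\alpha x_\delta D_\alpha$ plus the symmetry $g\mapsto\tr{(g^{-1})}$ and a covering of annuli gives the Cauchy bound with the $\frac{CL^4}{s-2t}$ constant, and that part of your plan is fine. The gap is in the second bullet, and it sits in your Step 3, not where you place it in Step 4. Your telescoping argument implicitly assumes that a jump $D_\beta x_\delta D_\alpha \mapsto D_\beta x_0 D_\alpha$ relates values of the \emph{same} coefficient $g\mapsto\langle\pi(g)\xi,\eta\rangle$ along a path in the Weyl chamber. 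For $K$-invariant vectors this is true because the coefficient only depends on $KgK$; for vectors that are merely $U$-equivariant it is false: writing $D_\beta x_\delta D_\alpha = k\,a\,k'$ in Cartan form, the elements $k,k'$ are not in $U$, so after each jump the coefficient is evaluated against rotated vectors $\pi(k')\xi$, $\tr\pi(k^{-1})\eta$, and these rotations accumulate uncontrollably along a chain. This is exactly why the paper does \emph{not} prove \eqref{eq=howe_moore_spherical} by a Cauchy argument: it fixes a radius $\gamma$, works with the operator-valued coefficient $A_\gamma=q\circ\pi(\diag(e^\gamma,1,e^{-\gamma}))\circ\iota$, and uses the one-parameter family $D_{2\gamma-\alpha}x_\delta D_\alpha$, $\alpha\in[\gamma,7\gamma/6]$, together with the explicit control of the Cartan $K$-components in Lemma \ref{lemma=embedding2} ($\|k_i-1\|\leq 2e^{-\gamma/4}$, and $k_2(\gamma,7\gamma/6)$ a rotation by $\pi/2$). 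The rotation sweep plus an intermediate value argument produces, for each pair of $\widetilde U$-characters with $\chi_2$ nontrivial, an $\alpha$ with $|1-\chi_1(y'_\alpha)\chi_2(y_\alpha)|=1$, which forces $\|\pi_2(\overline\chi_2)A_\gamma\pi_1(\overline\chi_1)\|\lesssim CL^2e^{\gamma(2t-s)}$ (Lemma \ref{lemma=A_almost_U_invariant}); combining with the automorphism $\widetilde\theta$ exchanging $U$ and $\widetilde U$, and with Lemma \ref{eq=a_almost_UandU'_invariant} (a vector almost invariant under both $U$ and $\widetilde U$ in a nontrivial irreducible $K$-module is small), one gets $\|A_\gamma\|\lesssim CL^2e^{\gamma(2t-s)}$ directly. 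None of this mechanism appears in your proposal, and it is the actual content of the proof in \S\ref{subsection=non_Kinvariant_coeffs}.

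Two further remarks. First, your Step 4 misdiagnoses the difficulty: if your Step 3 really gave a limit $l$ as $\ell(g)\to\infty$, uniformly over the finite-dimensional spaces $\mathrm{span}(\pi(K)\xi)$, then $l(\pi(k)\xi,\eta)=l(\xi,\eta)$ for all $k\in K$ (since $\ell(gk)=\ell(g)$), and averaging over $K$ on one side alone gives $l=0$ whenever $V_1$ (or $V_2$) is nontrivial --- no Schur orthogonality and no special treatment of $V_1\cong V_2^*$ would be needed. So the case you flag as ``the main obstacle'' would be vacuous under your Step 3, which confirms that the unjustified step is the Cauchy bound for non-spherical coefficients itself. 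Second, even if the limit route were repaired, it would yield \eqref{eq=howe_moore_spherical} only with an extra factor $\frac{1}{s-2t}$, whereas the statement (and the paper's direct argument at fixed $\gamma$) gives a constant depending only on $V_1,V_2$; and the $L^4$ in the paper comes from the renorming reduction to a $K$-isometric action (Remark \ref{rem=reduction_to_K_isometric}) rather than from path endpoints.
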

\begin{rem}\label{rem=reduction_to_K_isometric} 
If one is slightly more careful one can replace the terms $L^4$ by $L^2$ (at least) in \eqref{eq=howe_moore_equivariant} (see the proofs in \cite{MR2423763}). But this is probably not very important, since we did not try to estimate the constant $C_{V_1,V_2}$. In the proofs we will restrict ourselves to the case when the restriction to $K$ of $\pi$ is isometric, and we will get a term $L^2$. We can always reduce to this by renorming $X$ with the norm $\|x\|' = (\int_K \|\pi(k) x \|^2 dk)^{1/2}$. This norm satisfies $L^{-1} \|x\|' \leq \|x\| \leq L\|x\|'$. Moreover by definition $(X,\|\cdot\|')$ is isometrically a subspace of $L^2(K;X)$, so that by Fubini $\|(T_0-T_\delta)_X\|_{B(X,\|\cdot\|')} \leq \|(T_0-T_\delta)_X\|_{B(X,\|\cdot\|)}$. This gives an additional factor $L^2$.
\end{rem}

Let us deduce Theorem \ref{thm=T_renforce_Banachique_SL3_precis}.
\begin{proof}[Proof of Theorem \ref{thm=T_renforce_Banachique_SL3_precis}] 
By \eqref{eq=howe_moore_equivariant}, $\pi(KgK)$ has a limit in the norm topology of $B(X)$ as $\ell(g) \to \infty$. Denote by $P$ the limit. From \eqref{eq=howe_moore_equivariant} we deduce  
\[\|P - \pi(KgK)\| = \sup_{\|\xi\|_X \leq 1, \|\eta\|_X^* \leq 1} |\langle \pi(g) \xi,\eta\rangle - \lim_h \langle \pi(h) \xi,\eta\rangle| \lesssim \frac{C L^4}{s-2t} e^{-s-2t) \ell(g)},\] which is the announced inequality. A first remark is that
\begin{equation}\label{eq=pi(KgP)=P}
\pi(Kg)P = P \ \ \ \forall g \in G.
\end{equation} Indeed, $\pi(Kg)P = \lim_{h \to \infty} \pi(K g) \pi(KhK) =\lim_{h \to \infty} \int_K \pi(K gkh K) dk = P$.
The last equality is because $\ell(gkh) \to \infty$ uniformly in $k \in K$ as $h \to \infty$. Making $g \to \infty$, we get that $P^2= P$, \emph{i.e.} $P$ is a projection. It is clear that the image of $P$ contains $X^G$. We now prove the reverse inclusion. Let $\xi \in P(X)$ and $g \in G$. We prove that $\pi(g) \xi = \xi$. As explained in \cite{MR2423763}, this is easy when the representation is isometric and $X$ is stricly convex~: $\xi$ is the average over $K$ of the vectors of same norm $\pi(kg) \xi$. By strict convexity and continuity all these vectors are equal to $\xi$. In the general case one uses \eqref{eq=howe_moore_spherical} ($\xi$ is $K$-invariant), which implies that for every nontrivial unitary representation $V$ of $K$ and every $\eta \in X^*_V$, 
\[ \langle \pi(g) \xi,\eta\rangle= \lim_{h \to \infty} \int_K \langle \pi(gkh) \xi , \eta\rangle dk= 0.\]
If $\eta \in X^*$ is $K$-invariant, $\langle \pi(g) \xi,\eta\rangle = \langle \xi,\eta\rangle$. By linearity and Theorem \ref{thm=peterWeyl}, we therefore have $\langle \pi(g) \xi,\eta\rangle = \langle \xi,\eta\rangle$ for all $\eta \in X^*_{finite}$. This proves $\pi(g) \xi=\xi$ because $X^*_{finite}$ is weak-* dense in $X^*$ by the remark \ref{rem=finite_type_on_dual}. This concludes the proof.
\end{proof}

Let us deduce Theorem \ref{thm=quantitative_howe_moore_precis}.
\begin{proof}[Proof of Theorem \ref{thm=quantitative_howe_moore_precis}] We only have to prove that if $\pi$ has no invariant vector, the $l$ appearing in \eqref{eq=howe_moore_equivariant} is zero. But from the proof of Theorem \ref{thm=T_renforce_Banachique_SL3_precis}, $l = \langle P \xi,\eta\rangle$ where $P$ is a projection on the $G$-invariant vectors, \emph{i.e.} $P=0$.\end{proof}

\begin{rem} The proof of Theorem \ref{thm=quantitative_howe_moore_precis} has the following feature~: if $V_1$ and $V_2$ are not both trivial, \eqref{eq=howe_moorenontrivial} is proved directly and independantly from the other irreducible representations. On the contrary, in the case $V_1=V_2=1$, \eqref{eq=howe_mooretrivial} is a consequence of \eqref{eq=howe_moorenontrivial} for all the other representations.
\end{rem}

The proofs of \eqref{eq=howe_moore_equivariant} and \eqref{eq=howe_moore_spherical} are different, but they can both be decomposed in two parts~: one about the harmonic analysis on the pair $(K,U)$, and one about the combinatorics of the various embeddings of the pair $(K,U)$ in the pair $(G,K)$. The harmonic analysis part is much simpler for \eqref{eq=howe_moore_equivariant}.

\subsection{Harmonic analysis on $(K,U)$}\label{subsection=harmonic_analysis}
Here we derive some consequences of assumption \eqref{eq=assumption_on_X}. Recall that $K=\SO(3,\R)$, that we equip with some biinvariant metric. We will consider the subgroups $U$ and $\widetilde U$ of $K$ defined by
\begin{equation}\label{eq=def__of_U_Utilde} U= \begin{pmatrix} 1 & 0\\0&\SO(2)\end{pmatrix},\  \widetilde U = \begin{pmatrix} \SO(2) & 0\\0&1\end{pmatrix}.
\end{equation} Recall that $U \backslash K/U$ can be identified with the segment $[-1,1]$, through the identification of $U x U$ with $x_{1,1}$. Fix the following family $(x_\delta)_{-1 \leq \delta \leq 1}$ of representatives in each double class
\begin{equation}\label{eq=def_of_x_delta}
x_\delta = \begin{pmatrix}
\delta & -\sqrt{1-\delta^2}&0 \\ 
\sqrt{1-\delta^2} & \delta & 0 \\ 
0&0&1
\end{pmatrix} \in K.
\end{equation}

Recalling \eqref{eq=def_of_Tdelta}, an immediate consequence of Proposition \ref{prop=all_reduces_to_regular} is that if $\pi$ is an isometric representation of $K$ on a Banach space $X$ satisfying \eqref{eq=assumption_on_X}, then
\begin{equation}
\label{eq=Holder_continuity_of_norms}
\| \pi(U x U) - \pi(Ux_0 U)\|_{B(X)} \leq C |x_{1,1}|^s \ \ \forall x \in K.
\end{equation}
This is all we will need from this subsection for the proof of \eqref{eq=howe_moore_equivariant}. For the proof of \eqref{eq=howe_moore_spherical} one needs also some control of non $U$-invariant coefficients, namely
\begin{prop}\label{prop=nonspherical_Holder_forKU} For every pair of characters $\chi_1,\chi_2$ of $U$, there exists $C_{\chi_1,\chi_2} \in \R^+$ such that the following holds. Let $\pi$ an isometric representation of $K$ on a Banach space satisfying \eqref{eq=assumption_on_X} and $\xi,\eta$ unit vectors in $X,X^*$. If the coefficient $c(x) = \langle \pi(x) \xi,\eta\rangle$ satisfies $c(uxu')=\chi_2(u)\chi_1(u') c(x)$ for all $x\in K, u,u' \in U$, then 
\[ |c(x) - c(x_0)| \leq C_{\chi_1,\chi_2} C d(x, x_0)^s .\]
\end{prop}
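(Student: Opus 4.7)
The plan is to deduce Proposition \ref{prop=nonspherical_Holder_forKU} as a direct instance of Proposition \ref{prop=nonspherical_Holder_general}, applied to the subgroup $U\times U$ of $K\times K$ acting on $K$ by $(u_2,u_1)\cdot x = u_2 x u_1^{-1}$, together with the character $\chi(u_2,u_1):=\chi_2(u_2)\chi_1(u_1^{-1})$ on $U\times U$. The equivariance hypothesis on $c$ reads then exactly as $c(u\cdot x)=\chi(u) c(x)$ for $u=(u_2,u_1)$.

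First I would verify that $\chi$ is trivial on the stabilizer of $x_0$ in $U\times U$, which is required to apply Proposition \ref{prop=nonspherical_Holder_general}. A pair $(u_2,u_1)$ stabilizes $x_0$ iff $u_2=x_0 u_1 x_0^{-1}$, so $u_2$ lies in $U\cap x_0 U x_0^{-1}$. But elements of $U$ fix $e_1\in\R^3$, while elements of $x_0 U x_0^{-1}$ fix $x_0 e_1 = e_2$; any element of $U\cap x_0 U x_0^{-1}\subset \SO(3)$ therefore fixes two orthogonal vectors and must be the identity, forcing $(u_2,u_1)=(I,I)$. The stabilizer being trivial, the hypothesis on $\chi$ is vacuous.

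Next I would identify the operator in the resulting bound. Factoring the double integral,
\[
\int_{U\times U}\lambda(u_2 x u_1^{-1})\,du_2\,du_1 \;=\; \lambda(U)\,\lambda(x)\,\lambda(U) \;=\; \lambda(UxU)\;=\; T_\delta,
\]
where $\delta=x_{1,1}$ and the last equality is the definition \eqref{eq=def_of_Tdelta}. Proposition \ref{prop=nonspherical_Holder_general} then produces a constant $C'_{\chi_1,\chi_2}$ with
\[
|c(x)-c(x_0)| \;\leq\; C'_{\chi_1,\chi_2}\bigl(d(x,x_0) + \|(T_\delta-T_0)\otimes \mathrm{Id}_X\|_{B(L^2(K;X))}\bigr).
\]

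To conclude, I would invoke assumption \eqref{eq=assumption_on_X} to bound $\|(T_\delta-T_0)_X\|\leq C|\delta|^s$, and eliminate $\delta$ in favour of $d(x,x_0)$. The matrix-entry map $x\mapsto x_{1,1}$ is $1$-Lipschitz for the operator norm, hence (up to a harmless constant) for the biinvariant distance $d$; since $(x_0)_{1,1}=0$, this gives $|\delta|\lesssim d(x,x_0)$, hence $|\delta|^s\lesssim d(x,x_0)^s$. As $K$ is compact with diameter $D$, the trivial inequality $d(x,x_0)\leq D^{1-s} d(x,x_0)^s$ lets me absorb the first term into the second. Assuming without loss of generality that $C\geq 1$ (harmless in \eqref{eq=assumption_on_X}, since $\|(T_0-T_\delta)_X\|\leq 2$ always), this yields $|c(x)-c(x_0)|\leq C_{\chi_1,\chi_2} C d(x,x_0)^s$ as claimed. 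The only nontrivial step is the triviality of the stabilizer of $x_0$ in $U\times U$, which is a two-line computation in $\SO(3)$; apart from that, the proof is bookkeeping on top of Proposition \ref{prop=nonspherical_Holder_general} and assumption \eqref{eq=assumption_on_X}, so I do not anticipate a serious obstacle.
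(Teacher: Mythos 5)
Your proposal is correct and follows the same route as the paper's own proof: apply Proposition \ref{prop=nonspherical_Holder_general} with $U\times U\subset K\times K$ (observing the stabilizer of $x_0$ is trivial), identify $\int_{U\times U}\lambda(u\cdot x)\,du$ with $T_\delta$, and invoke \eqref{eq=assumption_on_X}. Your extra bookkeeping — the explicit stabilizer computation, and the remark that $C$ is bounded below by $\|T_0-T_1\|_{B(L^2(K))}$ so the additive $d(x,x_0)$ term can be absorbed — is exactly what the paper's terse ``this proves the Proposition because $s\le 1$'' leaves implicit.
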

\begin{proof} The case when $\chi_1=\chi_2$ are the trivial character is \eqref{eq=Holder_continuity_of_norms}. For general $\chi_1,\chi_2$ we apply Proposition \ref{prop=nonspherical_Holder_general} with the subgroup $U \times U$ of $K \times K$. This is legitimate because $(u,u') \in U \times U  \mapsto u x_0 u'^{-1}$ is injective. Recalling \eqref{eq=def_of_Tdelta} and \eqref{eq=assumption_on_X}, Proposition \ref{prop=nonspherical_Holder_general} therefore gives a constant $C_{\chi_1,\chi_2}$ such that
\begin{eqnarray*} |c(x) - c(x_0)|& \leq& C_{\chi_1,\chi_2} \left( \| (\lambda(U x U) - \lambda(Ux_0 U))_X\| + d(x,x_0) \right)\\ &\leq& C_{\chi_1,\chi_2} \left(C d(x, x_0)^s + d(x,x_0)\right).\end{eqnarray*}
This proves the Proposition because $s\leq 1$.
\end{proof}

\subsection{Combinatorics of the embeddings of $(K,U)$ into $(G,K)$} \label{subsection=combinatorics_of_embeddings}
By the $KAK$ decomposition we can identify $K \backslash G/K$ with $\Lambda = \{ (r,s,t) \in \R^3, r\geq s  \geq t, r+s+t=0\}$ through the identification $(r,s,t) \mapsto K \diag(e^r,e^s,e^t) K$. The reverse map sends $K x K$ to $(\log \lambda_1(x),\log \lambda_2(x),\log\lambda_3(x))$ for $(\lambda_i(x))_{i=1,2,3}$ the ordered sequence of singular values. In particular $\lambda_1(x) = \|x\|$ and $\lambda_3(x) = \|x^{-1}\|^{-1}$. In the pictures below we will represent $\Lambda$ in the plane.

For $\alpha \in \R^+$ denote $D_\alpha =\diag(e^{\alpha},e^{-\alpha/2},e^{-\alpha/2})$ and $i_{\alpha}:K \to G$ defined by $i_{\alpha}(x)=D_\alpha x D_\alpha$. Since $D_\alpha$ commutes with every element of $U$, $i_{\alpha}$ induces a map $U \backslash K/U \to K \backslash G/K$. Denote $j_{\alpha}:[-1,1] \to \Lambda$ the corresponding (continuous) map. One can easily check that $j_{\alpha}(-\delta) = j_{\alpha}(\delta)$, so we will only study $j_{\alpha}$ on $[0,1]$. The following Lemma is all we need for the proof of \eqref{eq=howe_moore_equivariant}, and is best described in Figure \ref{picture=image_of_jalpha}.
\begin{lemma}\label{lemma=image_of_j_alpha} The image by $j_\alpha$ of the segment $[0,1]$ is the segment \[[j_\alpha(0)=(\alpha/2,\alpha/2,-\alpha) , j_\alpha(1) = (2\alpha,-\alpha,-\alpha)].\] Let  $-1/2 \leq \varepsilon\leq 1$. If $\delta \geq 0$ satisfies $j_\alpha(\delta) = ((1+\varepsilon) \alpha, -\varepsilon \alpha, - \alpha)$, then $\delta \leq e^{(\varepsilon -1)\alpha}$.
\end{lemma}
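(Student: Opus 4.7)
\medskip

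\noindent\textbf{Proof plan.} The plan is to compute $D_\alpha x_\delta D_\alpha$ explicitly, read off its singular values, and then extract the bound by comparing an entry to the operator norm.

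\medskip

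First, using the definitions of $D_\alpha$ and $x_\delta$ from \eqref{eq=def_of_x_delta}, a direct matrix multiplication gives
\[
D_\alpha x_\delta D_\alpha = \begin{pmatrix} e^{2\alpha}\delta & -e^{\alpha/2}\sqrt{1-\delta^2} & 0 \\ e^{\alpha/2}\sqrt{1-\delta^2} & e^{-\alpha}\delta & 0 \\ 0 & 0 & e^{-\alpha}\end{pmatrix}.
\]
So $D_\alpha x_\delta D_\alpha$ is block-diagonal: it splits as a $2\times 2$ block $A$ acting on $\mathrm{span}(e_1,e_2)$ and the scalar $e^{-\alpha}$ on $\mathrm{span}(e_3)$. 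A short computation gives $\det A = e^\alpha\delta^2+e^\alpha(1-\delta^2)=e^\alpha$, while the submultiplicativity of the operator norm together with $\|x_\delta\|=1$ yields $\|A\|\le\|D_\alpha x_\delta D_\alpha\|\le \|D_\alpha\|^2=e^{2\alpha}$. Denoting by $\sigma_1\ge\sigma_2$ the singular values of $A$, we therefore have $\sigma_1\sigma_2=e^\alpha$ and $\sigma_1\le e^{2\alpha}$, which forces $\sigma_2\ge e^{-\alpha}$. Consequently the singular values of $D_\alpha x_\delta D_\alpha$ in decreasing order are $\sigma_1,\sigma_2,e^{-\alpha}$, and
\[
j_\alpha(\delta)=(\log\sigma_1,\log\sigma_2,-\alpha),\qquad \log\sigma_1+\log\sigma_2=\alpha.
\]

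\medskip

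This shows that $j_\alpha([0,1])$ is contained in the segment $\{(r,s,-\alpha)\in\Lambda:r+s=\alpha\}$, whose endpoints are exactly $(\alpha/2,\alpha/2,-\alpha)$ and $(2\alpha,-\alpha,-\alpha)$. The two endpoint identifications
\[
j_\alpha(0)=(\alpha/2,\alpha/2,-\alpha),\qquad j_\alpha(1)=(2\alpha,-\alpha,-\alpha)
\]
follow from the explicit form of $D_\alpha x_0 D_\alpha$ (whose $2\times 2$ block is $e^{\alpha/2}$ times an orthogonal matrix) and $D_\alpha x_1 D_\alpha = D_\alpha^2=\diag(e^{2\alpha},e^{-\alpha},e^{-\alpha})$. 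The continuity of the singular values in $\delta$ together with these endpoint values yields the surjectivity onto the full segment, proving the first claim.

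\medskip

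For the second claim, assume $j_\alpha(\delta)=((1+\varepsilon)\alpha,-\varepsilon\alpha,-\alpha)$ with $\delta\ge 0$ and $-1/2\le\varepsilon\le 1$. Then the top singular value of $D_\alpha x_\delta D_\alpha$ is $e^{(1+\varepsilon)\alpha}$, so $\|D_\alpha x_\delta D_\alpha\|=e^{(1+\varepsilon)\alpha}$. Since any entry of a matrix is bounded in absolute value by its operator norm, and the $(1,1)$ entry of $D_\alpha x_\delta D_\alpha$ equals $e^{2\alpha}\delta$, we get
\[
e^{2\alpha}\delta=|(D_\alpha x_\delta D_\alpha)_{1,1}|\le\|D_\alpha x_\delta D_\alpha\|=e^{(1+\varepsilon)\alpha},
\]
hence $\delta\le e^{(\varepsilon-1)\alpha}$, as desired.

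\medskip

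\noindent\textbf{Comments.} There is no real obstacle here: the whole argument is an elementary computation once one notices the block-diagonal structure. The only mildly subtle point is making sure that the third singular value of $D_\alpha x_\delta D_\alpha$ is indeed $e^{-\alpha}$ (and not $\sigma_2$), which is exactly what the bound $\|A\|\le e^{2\alpha}$ delivers. The second assertion is then a one-line consequence of the general inequality $|a_{ij}|\le\|A\|$.
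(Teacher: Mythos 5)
Your proof is correct and follows essentially the same route as the paper: compute $D_\alpha x_\delta D_\alpha$, observe that $e^{-\alpha}$ is its smallest singular value, conclude the image lies in (and, by continuity and the endpoint values, equals) the stated segment, and obtain the bound on $\delta$ by comparing the $(1,1)$ entry with the operator norm. The only cosmetic difference is how you pin down $e^{-\alpha}$ as the smallest singular value — you use $\det A=e^{\alpha}$ together with $\|A\|\le e^{2\alpha}$, while the paper bounds $\|i_\alpha(x_\delta)^{-1}\|\le e^{\alpha}$ directly — which amounts to the same elementary estimate.
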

\begin{figure}
  \center
  \includegraphics{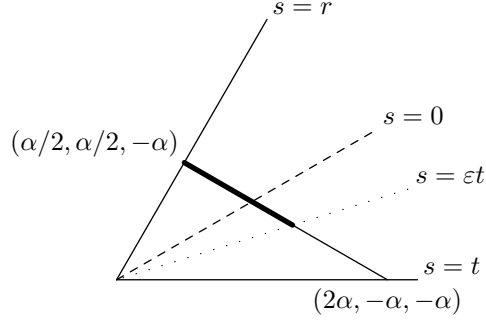}
  \caption{The image of $[0,1]$ by $j_\alpha$. The bold segment is contained in the image of $[0,e^{(\varepsilon-1)\alpha}]$.}\label{picture=image_of_jalpha}
\end{figure}

\begin{proof}
Let $0 \leq \delta \leq 1$. From the equality \[i_\alpha(x_\delta) = \begin{pmatrix}
\delta e^{2\alpha} & -\sqrt{1-\delta^2} e^{\alpha/2}&0 \\ 
\sqrt{1-\delta^2}e^{\alpha/2} & \delta e^{-\alpha} & 0 \\ 
0&0&e^{-\alpha}
\end{pmatrix},\] we see $e^{-\alpha}$ is a singular value of $i_\alpha(x_\delta)$. It is the smallest because $\|i_\alpha(x_\delta)^{-1}\| \leq \|D_\alpha^{-1}\|^2 \|x_\delta^{-1}\| = e^{\alpha}$. This shows that the image of $j_\alpha$ is contained in $\{(r,s,t) \in \Lambda, t=-\alpha\}$, \emph{i.e.} in $[(\alpha/2,\alpha/2,-\alpha),(2\alpha,-\alpha,-\alpha)]$. The equalities $j_\alpha(0)=(\alpha/2,\alpha/2,-\alpha)$ and $j_\alpha(1) = (2\alpha,-\alpha,-\alpha)$ are easy, so that by continuity of $j_\alpha$, we get that the image of $j_\alpha$ is the whole segment $[(\alpha/2,\alpha/2,-\alpha),(2\alpha,-\alpha,-\alpha)]$. Remark also that when $\delta$ grows the angle between the vector $(1,0,0) \in \R^3$ and its image by $x_\delta$ decreases, so that the norm of $i_\alpha(x_\delta)$ increases. This shows that $j_\alpha$ is injective on $[0,1]$.

Let $\delta \geq 0$ such that $j_\alpha(\delta) = ((1+\varepsilon) \alpha, -\varepsilon \alpha, - \alpha)$. Then $\|i_\alpha(x_\delta)\| = e^{(1+\varepsilon) \alpha}$. The norm of a matrix is larger than its $(1,1)$ entry, which gives $e^{2\alpha} \delta \leq e^{(1+\varepsilon) \alpha}$.
\end{proof}

For the proof of \eqref{eq=howe_moore_spherical} (when $V_2 = \overline{V_1}$) we will need to consider maps $i_{\alpha,\beta}:x \in K \mapsto D_\beta x D_\alpha$ with $\alpha \neq \beta$. We also need to control the elements of $K$ appearing in the $KAK$ decomposition of $i_{\alpha,\beta}(x)$. All we need follows from elementary computations with two by two matrices. We summarize them in the following Lemma  (this is almost  contained in Lemme 2.8 in \cite{MR2423763}). Remember the definition $\widetilde U$ in \eqref{eq=def__of_U_Utilde}.

\begin{lemma}\label{lemma=embedding2}
Let $\gamma\geq 0$. Then for every $\alpha \in [\gamma,7\gamma/6]$ and for $i=1,2$ there exist $\delta_i=\delta_i(\gamma,\alpha) \in [0,1]$ and $k_i=k_i(\gamma,\alpha),k_i'=k_i'(\gamma,\alpha) \in \widetilde U$ such that
\begin{enumerate}
\item\label{item=form1} $D_{2\gamma - \alpha} x_{\delta_1} D_\alpha = k_1 \diag(e^{\gamma},1,e^{-\gamma}) k_1'$
\item\label{item=form2} $D_{2\gamma - \alpha} x_{\delta_2} D_\alpha = k_2 \diag(e^{3\gamma/4},e^{\gamma/4},e^{-\gamma}) k_2'$
\item \label{item=contr_delta} $\delta_i \leq e^{-\gamma}$
\item \label{item=contr_k} $\| k_1-1\| \leq 2 e^{-\gamma/4}$, $\| k_1'-1\| \leq 2 e^{-\gamma/4}$, $\| k_2'-1\| \leq 2 e^{-\gamma/4}$
\item \label{item=k2} $k_i(\gamma,\alpha)$ and $k'_i(\gamma,\alpha)$ depend continuously on $\alpha$ and 
\[k_2(\gamma,7\gamma/6) = \begin{pmatrix} 0&-1&0\\1 & 0 &0\\ 0&0&1\end{pmatrix}.\]
\end{enumerate}
\end{lemma}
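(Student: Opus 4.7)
My plan is to reduce the entire lemma to an explicit computation with the top-left $2\times 2$ block of $D_{2\gamma-\alpha}\,x_\delta\,D_\alpha$. Since $x_\delta$ fixes the third basis vector and the $(3,3)$-entries of $D_{2\gamma-\alpha}$ and $D_\alpha$ multiply to $e^{-\gamma}$, the product has the block form
\begin{equation*}
D_{2\gamma-\alpha}\,x_\delta\,D_\alpha = \begin{pmatrix} M(\delta,\alpha) & 0 \\ 0 & e^{-\gamma} \end{pmatrix}, \qquad
M(\delta,\alpha) = \begin{pmatrix}\delta\, e^{2\gamma} & -\sqrt{1-\delta^2}\,e^{2\gamma-3\alpha/2} \\ \sqrt{1-\delta^2}\,e^{3\alpha/2-\gamma} & \delta\, e^{-\gamma}\end{pmatrix}.
\end{equation*}
A direct computation gives $\det M(\delta,\alpha) = e^\gamma$, so the singular values $\sigma_1\geq\sigma_2>0$ of $M$ satisfy $\sigma_1\sigma_2 = e^\gamma$. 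Finding $k_i, k_i'\in\widetilde U$ that realize item~(\ref{item=form1}) or~(\ref{item=form2}) therefore amounts to producing a factorization $M = R_\theta\,\diag(\sigma_1,\sigma_2)\,R_\phi$ with $R_\theta, R_\phi\in\SO(2)$ and prescribed top singular value ($e^\gamma$ in case~1, $e^{3\gamma/4}$ in case~2).

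For existence, I would observe that
\begin{equation*}
\sigma_1^2+\sigma_2^2 = \mathrm{tr}(MM^T) = (1-\delta^2)(e^{4\gamma-3\alpha}+e^{3\alpha-2\gamma}) + \delta^2(e^{4\gamma}+e^{-2\gamma})
\end{equation*}
is affine in $\delta^2$ with positive coefficient for $\alpha\in[\gamma,7\gamma/6]$ and $\gamma>0$. At $\delta=0$, $M$ is antidiagonal with $\sigma_1(0,\alpha) = e^{3\alpha/2-\gamma}\in[e^{\gamma/2},e^{3\gamma/4}]$, and at $\delta=1$, $\sigma_1(1,\alpha) = e^{2\gamma}$. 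Since both targets $e^\gamma$ and $e^{3\gamma/4}$ lie in the attained range, the intermediate value theorem produces $\delta_1(\alpha),\delta_2(\alpha)\in[0,1]$ depending continuously on $\alpha$. At the extremal point $\alpha=7\gamma/6$ one has $\delta_2=0$, and direct inspection gives $M(0,7\gamma/6) = \begin{pmatrix}0 & -1 \\ 1 & 0\end{pmatrix}\diag(e^{3\gamma/4},e^{\gamma/4})$, so $k_2 = \begin{pmatrix}0 & -1 \\ 1 & 0\end{pmatrix}$ and $k_2'=I$, which after padding with a $1$ in the $(3,3)$-entry yields item~(\ref{item=k2}).

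For item~(\ref{item=contr_delta}), the estimate $\|Me_1\|^2 \leq \sigma_1^2$ immediately gives $\delta^2 e^{4\gamma} \leq \sigma_1^2$, hence $\delta_1\leq e^{-\gamma}$ and $\delta_2\leq e^{-5\gamma/4}$. For item~(\ref{item=contr_k}), I would extract $k_i$ and $k_i'$ from the orthogonal diagonalizations of $MM^T$ and $M^TM$: direct computation shows that their off-diagonal entries have the respective forms $2\delta\sqrt{1-\delta^2}\,e^\gamma\sinh(3\alpha/2)$ and $-2\delta\sqrt{1-\delta^2}\,e^\gamma\sinh(3\gamma-3\alpha/2)$, while their diagonal entries differ by $\delta^2(e^{4\gamma}-e^{-2\gamma}) + (1-\delta^2)(e^{4\gamma-3\alpha}-e^{3\alpha-2\gamma})$. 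Under the bound $\delta_i\leq e^{-\gamma}$ from~(\ref{item=contr_delta}), this difference is dominated by the $\delta^2 e^{4\gamma}$ term and is of order $e^{2\gamma}$ in the worst case $\alpha=7\gamma/6$; the diagonalizing rotation angles are therefore $O(e^{-\gamma/4})$, yielding the claimed estimate $2e^{-\gamma/4}$ after careful bookkeeping. Continuity of $k_i$ and $k_i'$ in $\alpha$ follows from the simplicity of the singular values throughout $[\gamma,7\gamma/6]$, after choosing a consistent sign at one value of $\alpha$.

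The main obstacle is item~(\ref{item=contr_k}). The angle estimates, while elementary, are sensitive to the precise interplay between the off-diagonal entries (which can reach order $e^{7\gamma/4}$) and the diagonal gaps (of order $e^{2\gamma}$) of $MM^T$ and $M^TM$ near $\alpha=7\gamma/6$; one must exploit the bound $\delta_i\leq e^{-\gamma}$ precisely in order to extract the factor $e^{-\gamma/4}$. The substantive content of the lemma, however, is the observation that $\det M(\delta,\alpha) = e^\gamma$ independently of $\delta$ and $\alpha$: this identity is what forces the prescribed $KAK$ factorizations to exist.
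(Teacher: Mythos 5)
Your reduction to the upper-left $2\times 2$ block $M(\delta,\alpha)$, the monotonicity/intermediate-value argument producing $\delta_1,\delta_2$ (items (1)--(2)), the bound $\delta_i e^{2\gamma}\leq \sigma_1$ for item (3), and the explicit computation at $\alpha=7\gamma/6$ for item (5) are all correct and essentially coincide with the paper's proof, which phrases existence as the statement that $\delta\mapsto K D_{2\gamma-\alpha}x_\delta D_\alpha K$ sweeps out a segment of the Weyl chamber containing both $(\gamma,0,-\gamma)$ and $(3\gamma/4,\gamma/4,-\gamma)$ exactly when $\gamma\leq\alpha\leq 7\gamma/6$.

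The gap is item (4), which you yourself flag as the main obstacle and leave to ``careful bookkeeping''; as sketched, the mechanism does not close. To make the diagonalizing angles of $MM^T$ and $M^TM$ small you need a \emph{lower} bound on their diagonal gaps, but item (3) only gives the \emph{upper} bound $\delta_i\leq e^{-\gamma}$, so the claim that the gap ``is dominated by the $\delta^2 e^{4\gamma}$ term'' is not something you have established. In fact it is false in part of the range: $\delta_2\to 0$ as $\alpha\to 7\gamma/6$, so for $M^TM$ in case (2) the gap near that endpoint comes from the $(1-\delta^2)(e^{3\alpha-2\gamma}-e^{4\gamma-3\alpha})$ term and is of order $e^{3\gamma/2}$, not $e^{2\gamma}$. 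Moreover the ratio (off-diagonal)/(gap) degenerates as $\gamma\to 0$, while the claimed bound $2e^{-\gamma/4}$ is nontrivial for every $\gamma>0$; and a blanket ``all diagonalizing angles are $O(e^{-\gamma/4})$'' would also apply to $k_2$, which is false ($k_2(\gamma,7\gamma/6)$ is a quarter-turn --- this is precisely why the lemma omits $k_2$ from item (4)), so any correct argument must distinguish the three rotations that are controlled from the one that is not. The paper avoids all of this by using the \emph{known} singular values of the right-hand side instead of the unknown gap on the left: writing $k_1,k_1',k_2'$ as rotations by $\theta_1,\theta_1',\theta_2'$, equating the $\ell^2$-norm of the second row of (1) gives $(1-\delta_1^2)e^{3\alpha-2\gamma}+\delta_1^2 e^{-2\gamma}=e^{2\gamma}\sin^2\theta_1+\cos^2\theta_1$, and of the second column of (2) gives $(1-\delta_2^2)e^{4\gamma-3\alpha}+\delta_2^2 e^{-2\gamma}=e^{3\gamma/2}\sin^2\theta_2'+e^{\gamma/2}\cos^2\theta_2'$; the constraints $\alpha\leq 7\gamma/6$ and $\alpha\geq\gamma$ then give $\sin^2\theta_1\leq (e^{3\gamma/2}-1)/(e^{2\gamma}-1)\leq e^{-\gamma/2}$ and $\sin^2\theta_2'\leq (e^{\gamma}-e^{\gamma/2})/(e^{3\gamma/2}-e^{\gamma/2})\leq e^{-\gamma/2}$ for \emph{all} $\gamma\geq 0$, using only upper bounds on matrix entries; one then normalizes by $\theta\mapsto\theta+\pi$ on both factors so that $\cos\theta_i'\geq 0$ and checks $\cos\theta_1\geq 0$ by a trace comparison, whence $\|k-1\|\leq 2|\sin\theta|\leq 2e^{-\gamma/4}$. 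Either adopt such an exact-identity argument, or supply the missing lower bounds on $\delta_1$ and on the gaps (uniformly in $\alpha$, with a separate treatment of small $\gamma$ and of the sign of $a-c$); as written, your sketch does not prove item (4).
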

\begin{proof}
The case $\gamma=0$ is obvious. For any $\alpha \geq \beta >0$, the same argument as for Lemma \ref{lemma=image_of_j_alpha} gives that $\delta \in [0,1] \mapsto K D_\beta x_\delta D_\alpha K$ is injective (for this we need $\beta>0$), and identifies $\{ K D_\beta x_\delta D_\alpha K, 0 \leq \delta \leq 1\} \subset K\backslash G/K$ with the segment \[\left[(\alpha - \frac \beta 2, \beta - \frac \alpha 2 ,- \frac{\alpha + \beta}{2}), ( \alpha + \beta,-\frac{\alpha + \beta}{2}, -\frac{\alpha + \beta}{2}) \right] \subset \Lambda.\]
Note that $(3\gamma/4,\gamma/4,-\gamma)$ belongs to this segment if and only if $\alpha+\beta = 2 \gamma$ and $\gamma \leq \alpha \leq 7\gamma/6$, in which case $(\gamma,0,-\gamma)$ also belongs to it. This proves the existence of $k_i,k_i',\delta_i$ satisfying \ref{item=form1} and \ref{item=form2}. \ref{item=contr_delta} is obtained by saying that the $(1,1)$ entry of the left-hand side of \ref{item=form1} (resp. \ref{item=form2}), namely $e^{2\gamma \delta_i}$, is smaller than the norm of the right-hand side of \ref{item=form1}, namely $e^\gamma$ (resp. \ref{item=form2}, namely $e^{3\gamma/4}$). We now prove \ref{item=contr_k}. Write \[k_i= \begin{pmatrix} \cos \theta_i & -\sin \theta_i&0 \\ \sin\theta_i & \cos \theta_i & 0\\ 0&0&1 \end{pmatrix}\]
and similarly for $k_i'$ with $\theta_i'$. We can assume that $\cos \theta_i' \geq 0$ by replacing $k_i$ and $k_i'$ by $-k_i$ and $-k_i'$. Compute the $\ell^2$-norm of the second column of \ref{item=form2}. to get
\[ (1-\delta_2^2) e^{4\gamma - 3\alpha} + \delta_2^2 e^{-2\gamma} = e^{3 \gamma/2}\sin^2 \theta_2' + e^{\gamma/2} \cos^2 \theta_2'.\]
Using that $\alpha \geq \gamma$, we get $\sin^2 \theta_2' \leq (e^\gamma - e^{\gamma/2})/(e^{3\gamma/2} - e^{\gamma/2}) \leq e^{-\gamma/2}$. It remains to notice that $\| k_2' - 1\| = | e^{i\theta_2'} - 1| \leq  2 |\sin \theta_2'| \leq 2e^{-\gamma/4}$ if $\cos \theta_2' \geq 0$. The same computation gives $\sin^2 \theta_1' \leq e^{-\gamma} \leq e^{-\gamma/2}$, and hence $\| k_1' - 1\| \leq 2e^{-\gamma/4}$. To dominate $\|k_1-1\|$, 
compute the $\ell^2$-norm of the second row in \ref{item=form1} to get
\[ (1-\delta_1^2)e^{3\alpha - 2\gamma} + \delta_1^2 e^{-2\gamma} = \sin^2 \theta_1 e^{2\gamma}  + \cos^2 \theta_1.\]
Using that $\alpha \leq 7\gamma/6$, this gives $\sin^2 \theta_1 \leq (e^{3 \gamma/2 }- 1)(e^{2\gamma} -1) \leq e^{-\gamma/2}$. The inequality $\|k_1 - 1\| \leq 2e^{-\gamma/4}$ follows from the observation that $\cos \theta_1 \geq 0$. This can be checked by comparing $\textrm{Tr}(\diag(e^\gamma,-1,0)A)$ for $A$ both sides of \ref{item=form1}~: $(e^{3\gamma} - e^{-\gamma})\delta_1 = \cos \theta_1 \cos \theta_1' (e^{2\gamma} -1)$, and hence $\cos \theta_1 \cos \theta_1' \geq 0$.

The continuity of $k_i,k_i'$ is simple. The last point \ref{item=k2} is also easy, because for $\alpha = 7\gamma/6$,  
\[ D_{5\gamma/6} x_0 D_{7\gamma/6} = \begin{pmatrix} 0&-1&0\\1 & 0 &0\\ 0&0&1\end{pmatrix} \begin{pmatrix} e^{3\gamma/4}&0&0\\0 & e^{\gamma/4} &0\\ 0&0&e^{-\gamma}\end{pmatrix},\]
which gives $\delta_2(7\gamma/6,\gamma)= 0$ and $k_2(7\gamma/6,\gamma) =\begin{pmatrix} 0&-1&0\\1 & 0 &0\\ 0&0&1\end{pmatrix}$. 
\end{proof}

\subsection{First step: $K$-invariant coefficients}\label{subsection=first_step}
We now prove \eqref{eq=howe_moore_equivariant} in Proposition \ref{prop=quantitative_howe_moore_partial}. As explained in Remark \ref{rem=reduction_to_K_isometric}, we can assume, in addition to the hypotheses of Proposition \ref{prop=quantitative_howe_moore_partial}, that $\pi(k)$ is an isometry of $X$ for every $k \in K$.

We will prove that $g \mapsto \pi(KgK)$ satisfies the Cauchy criterion
with explicit estimates. This will easily imply
\eqref{eq=howe_moore_equivariant}. 

The operator $\pi(KgK) \in B(X)$ only depends on the class of $g$ in $K \backslash G/K$. Denote by $f:\Lambda \to B(X)$ the corresponding map. We will show that $f$ is Cauchy. To do this introduce $d_\pi$ the distance on $\Lambda$ defined by $d_\pi(\lambda,\lambda')=\|f(\lambda) - f(\lambda')\|_{B(X)}$. Take $\alpha>0$, $\varepsilon \in (0,1)$ and denote $\alpha'=(1+\varepsilon)\alpha$, and $\gamma= s -\varepsilon s-2t$. With the notation of Lemma \ref{lemma=image_of_j_alpha}, we have $\pi(K i_\alpha(x) K) = \pi(K D_\alpha) \pi(U x U) \pi(D_\alpha K)$. Hence by \eqref{eq=Holder_continuity_of_norms} and \eqref{eq=small_exponential_growth} and the equality $\ell(D_\alpha)=\alpha$
\begin{eqnarray*}
 d_\pi(j_\alpha(\delta), j_\alpha(0)) &\leq& \|\pi(K D\alpha)\|  C \delta^s\| \pi(D_\alpha K)\|\\
&\leq& C L^2 e^{2\alpha t} \delta^s.
\end{eqnarray*}
In particular if $0 \leq \delta \leq e^{(\varepsilon-1) \alpha}$
\[ d_\pi(j_\alpha(\delta),j_\alpha(0)) \leq C L^2  e^{((\varepsilon - 1)s + 2 t)\alpha)} = C L^2 e^{-\gamma \alpha}.\]
By Lemma \ref{lemma=image_of_j_alpha}, the $d_\pi$-diameter of the segment $\{(r,s,t) \in \Lambda, t=-\alpha, s \geq -\varepsilon \alpha\}$ (Figure \ref{picture=image_of_jalpha}) is less that $2C L^2 e^{-\gamma \alpha}$.

Let us now consider the automorphism $\theta$ of $G$ given by $g \mapsto \tr{(g^{-1})}$. It preserves $K$ and induces  on $\Lambda$ the symmetry around the axis $s=0$, ie the map $(r,s,t) \mapsto (-t,-s,-r)$. Moreover it preserves the length $\ell$. If we apply the preceding to the representation $\pi \circ \theta$, we therefore also have that the $d_\pi$-diameter of the segment $\{(r,s,t) \in \Lambda, r=\alpha, s \leq \varepsilon \alpha\}$ is less that $2C L^2 e^{-\gamma \alpha}$. Combining these two estimates we get in particular that the $d_\pi$-diameter of the domain $\{(r,s,t) \in \Lambda,\alpha \leq \max(r,-t) \leq (1+\varepsilon)\alpha\}$ is less than $6CL^2 e^{-\gamma \alpha}$ (see Figure \ref{picture=zigzag} for a proof). Equivalently, the $d_\pi$-diameter of the domain $\{(r,s,t) \in \Lambda,\alpha \leq \max(r,-t) \leq \alpha'\}$ is less than $6CL^2 e^{(2t-s)\alpha + (\alpha'- \alpha)s}$ for every $\alpha' \in (\alpha,2\alpha)$. By a covering argument this implies, if $\alpha \geq 1$, that the $d_\pi$-diameter of the domain $\{(r,s,t) \in \Lambda, \max(r,-t) \geq \alpha\}$ is less than $6C L^2 \sum_{n \geq 0} e^{(2t-s)(\alpha +n) +s} = C' e^{(2t-s) \alpha}$ where $C' = 6CL^2 e^s/(1-e^{2t-s})$. This diameter goes to zero as $\alpha \to \infty$. This is exactly saying that $f$ is Cauchy. Moreover, if $P \in B(X)$ denotes the limit of $\pi(KgK)$ at infinity, we have $\| \pi(KgK) - P\|_{B(X)} \leq C'e^{(2t-s)\ell(g)}$. This proves \eqref{eq=howe_moore_equivariant}.

\begin{figure}
  \center
  \includegraphics{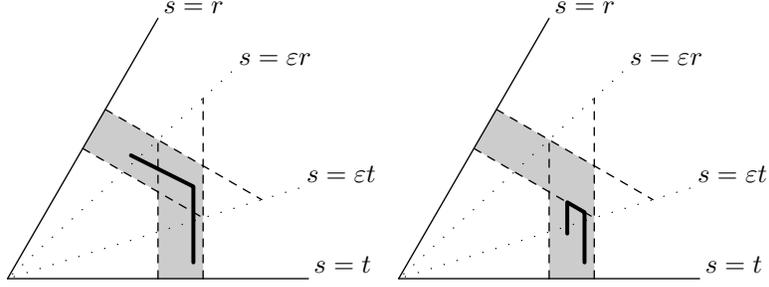}
  \caption{The $d_\pi$-diameter of the region $\{(r,s,t) \in \Lambda,\alpha \leq \max(r,-t) \leq (1+\varepsilon)\alpha\}$ (in grey) is less than $6CL^2 e^{-\gamma \alpha}$~: two points on different sides of the line $s=0$ are linked by two segments of $d_\pi$-diameter less than $2C L^2 e^{-\gamma \alpha}$ (left picture), and two points on the same side are linked by $3$ segments of $d_\pi$-diameter less than $2C L^2 e^{-\gamma \alpha}$ (right picture).}\label{picture=zigzag}
\end{figure}

\subsection{Second step~: non $K$-invariant coefficients.} \label{subsection=non_Kinvariant_coeffs}
We now prove \eqref{eq=howe_moore_spherical}. We only prove the case when $V_1$ is not the trivial representation. The other case is similar. We also assume that the restriction of $\pi$ to $K$ is an isometric representation of $K$ (see Remark \ref{rem=reduction_to_K_isometric}). 

Use Theorem \ref{thm=peterWeyl} to get Banach spaces $Y_1,Y_2$ and isomorphisms $u_1 \colon Y_1 \injtens V_1 \to X_{V_1}$ and $u_2 \colon Y_2 \injtens V_2 \to X_{V_2}^*$ satisfying $\|u_i\|,\|u_i^{-1}\| \leq \mathrm{dim}(V_i)$. By linearity it is enough to prove \eqref{eq=howe_moore_spherical} when $\xi$ and $\eta$ correspond to simple tensors $y_1 \otimes v_1$ and $y_2 \otimes v_2$ for unit vectors $y_1,y_2 \in Y^*$. Let $\iota \colon V_1 \to X$ the $K$-equivariant map $v \in V_1 \mapsto u_1(y_1 \otimes v)$ and $q:X \to V_2^*$ the restriction to $X$ of the adjoint of the map $v \in V_2 \mapsto u_2(y_2 \otimes v) \in X^*$. Then $\|\iota \| \leq \mathrm{dim}(V_1)$ and $\|q \| \leq \mathrm{dim}(V_2)$. We will prove that
\[ \| q \circ \pi(g) \circ \iota\|_{B(V_1,V_2^*)} \lesssim_{V_1,V_2} C L^2 e^{(2t-s)\ell(g)}.\]
This proves \eqref{eq=howe_moore_spherical} (with $L^4$ replaced by $L^2$) because
\[ \langle q \circ \pi(g) \circ \iota (v_1),v_2\rangle = \langle \pi(g) \xi,\eta\rangle\]
and $\|v_1\| \leq \mathrm{dim}(V_i) \|\xi\|$ and  $\|v_2\| \leq \mathrm{dim}(V_2) \|\xi\|$.

The difference with the proof of the first step is that here we fix $\gamma>0$ and we directly prove the preceding inequality for all $g \in G$ such that $\ell(g)=\gamma$ (Figure \ref{picture=image_of_sphere}). 
\begin{figure}
  \center
  \includegraphics{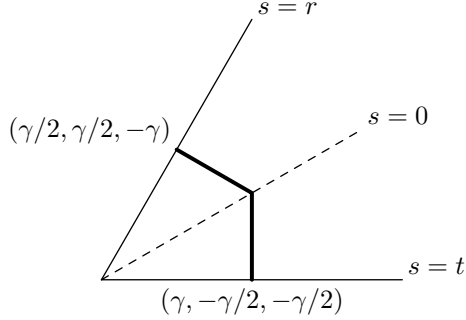}
  \caption{The sphere $\ell(g) = \gamma$. The vertical segment corresponds to $\{g, \ell(g) = \log \|g\| = \gamma\}$. The other segment corresponds to $\{g, \ell(g) = \log \|g^{-1}\| = \gamma\}$.}\label{picture=image_of_sphere}
\end{figure}

Let $\alpha,\beta>0$ and $0 < \delta <1$. Since $U$ is commutative, $V_1$ and $V_2^*$ decompose, as $U$-spaces, as direct sums of characters. Applying Proposition \ref{prop=nonspherical_Holder_forKU} to each of these characters and using the equalities $\ell(D_\alpha)=\alpha$, $\ell(D_\beta)=\beta$, we get
\begin{equation}\label{eq=Hoelder_continuity_general} \| q \circ (\pi(D_\alpha x_\delta D_\beta) - \pi(D_\alpha x_0 D_\beta)) \circ \iota \|_{B(V_1,V_2^*)} \lesssim_{V_1,V_2} C L^2 \delta^s  e^{t(\alpha + \beta)}.\end{equation}

Fix $\gamma\geq 0$ and denote by $A_\gamma = q \circ \pi(\diag(e^\gamma,1,e^{-\gamma})) \circ \iota \in B(V_1,V_2^*)$. Taking $\alpha = \beta = \gamma$ in the preceding equation, Lemma \ref{lemma=image_of_j_alpha} and the fact that $\| q \pi(g) \iota\|$ only depends on $KgK$ imply (see Figure \ref{picture=image_of_sphere}) that
\begin{equation*}\label{eq=suffit_de_traiter_gamma} \| q \pi(g) \iota\| \lesssim_{V_1,V_2} CL^2  e^{\gamma(2t-s)} + \| A_\gamma \|\textrm{ if }g \in K\textrm{ with } \ell(g) = \log \|g^{-1}\| = \gamma.\end{equation*}
This equation applied to the representation $\pi \circ \theta$ where $\theta(g) = \tr{g^{-1}}$ implies that the same inequality holds if $\ell(g) = \log \|g\| = \gamma$. It is therefore enough to prove that $\| A_\gamma \|_{B(V_1,V_2^*)} \lesssim_{V_1,V_2} CL^2  e^{\gamma(2t-s)}$. This will be easy from
\begin{lemma} \label{lemma=A_almost_U_invariant} Let $\chi_1,\chi_2$ be characters of $\widetilde U$, with $\chi_2$ not the trivial character. For a representation $\pi$ of $K$, denote by $\pi(\overline \chi_i) = \int_{\widetilde U} \overline \chi_i(\widetilde u) \pi(\widetilde u) d\widetilde u$. Denote by $\pi_1$ and $\pi_2$ the unitary representations on $V_1$ and $V_2^*$. Then
\[ \| \pi_2(\overline \chi_2) A_\gamma \pi_1(\overline \chi_1)\|_{B(V_1,V_2^*)} \lesssim_{V_1,V_2} CL^2  e^{\gamma(2t-s)}.\]
\end{lemma}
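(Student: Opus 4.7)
My plan is to reduce $A_\gamma$ to an operator $F$ that exactly intertwines the $\widetilde U$-actions on $V_1$ and on $V_2^*$, so that averaging against the character $\overline{\chi_2}$ on the left and $\overline{\chi_1}$ on the right annihilates $F$ whenever $\chi_1\neq\chi_2$; the residual case $\chi_1=\chi_2\neq 1$ (exactly the situation $V_1=V_2^*$ flagged in the introduction as needing a new ingredient) will require a second reduction.

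First I apply Lemma~\ref{lemma=embedding2}(\ref{item=form1}) at $\alpha=\gamma$, writing
\[\diag(e^\gamma,1,e^{-\gamma})=k_1^{-1}\,D_\gamma\,x_{\delta_1}\,D_\gamma\,k_1'^{-1}\]
with $k_1,k_1'\in\widetilde U$ and $\delta_1\leq e^{-\gamma}$. Because $k_1,k_1'\in\widetilde U$ and $\chi_i$ are characters, one has $\pi_2(\overline{\chi_2})\pi_2(k_1^{-1})=\overline{\chi_2(k_1)}\,\pi_2(\overline{\chi_2})$ and $\pi_1(k_1'^{-1})\pi_1(\overline{\chi_1})=\overline{\chi_1(k_1')}\,\pi_1(\overline{\chi_1})$, so these Weyl factors pull out as unimodular scalars. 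Decomposing the finite-dimensional representations $V_1$ and $V_2$ into their $U$-isotypic pieces (finitely many) and applying Proposition~\ref{prop=nonspherical_Holder_forKU} to each piece, with $\delta_1\leq e^{-\gamma}$ and $\|\pi(D_\gamma)\|\leq Le^{t\gamma}$ on each side, yields
\[\bigl\|q\pi(D_\gamma x_{\delta_1}D_\gamma)\iota-q\pi(D_\gamma x_0 D_\gamma)\iota\bigr\|_{B(V_1,V_2^*)}\lesssim_{V_1,V_2}CL^2e^{(2t-s)\gamma},\]
which is already the target bound; it therefore suffices to estimate $\|\pi_2(\overline{\chi_2})\,q\pi(D_\gamma x_0 D_\gamma)\iota\,\pi_1(\overline{\chi_1})\|$.

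The key algebraic input is that $x_0=w$ lies in $\widetilde U$, and the direct computation $wD_\gamma w^{-1}=\diag(e^{-\gamma/2},e^\gamma,e^{-\gamma/2})$ gives $D_\gamma wD_\gamma=w\cdot D'(\gamma)$ with $D'(\gamma):=\diag(e^{\gamma/2},e^{\gamma/2},e^{-\gamma})$. Because $D'(\gamma)$ is scalar on its upper-left $2\times 2$ block, it commutes with every element of $\widetilde U$, so the operator $F:=q\pi(D'(\gamma))\iota\in B(V_1,V_2^*)$ is a genuine $\widetilde U$-intertwiner between $\pi_1$ and $\pi_2$. By the Peter--Weyl decomposition, $F$ preserves $\widetilde U$-isotypic components, hence $\pi_2(\overline{\chi_2})F\pi_1(\overline{\chi_1})=0$ whenever $\chi_1\neq\chi_2$; passing $w$ through $q$ by $K$-equivariance contributes only a scalar factor $\chi_2(w)$, so the previous steps close the proof in this case.

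The hard part will be the remaining case $\chi_1=\chi_2$, where the intertwiner $F$ is not killed and its a priori bound $\|F\|\lesssim L e^{t\gamma}$ is too weak. To handle it I would run a parallel reduction using Lemma~\ref{lemma=embedding2}(\ref{item=form2}) at the special value $\alpha=7\gamma/6$, where $k_2=w$, $k_2'=1$, $\delta_2=0$ hold \emph{exactly}, so that no Hölder error enters and one has the clean identity
\[D_{5\gamma/6}\,w\,D_{7\gamma/6}=w\cdot\diag(e^{3\gamma/4},e^{\gamma/4},e^{-\gamma}).\]
This yields a second exact relation tying $\pi_2(\overline{\chi_2})A_\gamma\pi_1(\overline{\chi_1})$ to $\pi_2(\overline{\chi_2})B_\gamma\pi_1(\overline{\chi_1})$ at the neighbouring point $\diag(e^{3\gamma/4},e^{\gamma/4},e^{-\gamma})$; iterating the Form~(\ref{item=form1})/(\ref{item=form2}) reduction on $B_\gamma$, together with the continuity of $\alpha\mapsto k_2(\gamma,\alpha)$ from $1$ to $w$ inside $\widetilde U$ (and, if needed, a contragredient transpose that swaps the roles of $V_1$ and $V_2^*$), should contract the remaining matching-character component down to the desired $CL^2e^{(2t-s)\gamma}$ error.
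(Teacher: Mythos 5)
Your treatment of the case $\chi_1\neq\chi_2$ is correct and is a genuinely nice shortcut: since $x_0\in\widetilde U$ and $D_\gamma x_0 D_\gamma=x_0\,\diag(e^{\gamma/2},e^{\gamma/2},e^{-\gamma})$ with the second factor central for $\widetilde U$, the operator $q\pi(D_\gamma x_0D_\gamma)\iota$ is (up to the scalar $\chi_2(x_0)$) a $\widetilde U$-intertwiner, and the Weyl factors $k_1,k_1'$ from Lemma \ref{lemma=embedding2}(\ref{item=form1}) at $\alpha=\gamma$ pull out as unimodular scalars; combined with \eqref{eq=Hoelder_continuity_general} and $\delta_1\leq e^{-\gamma}$ this kills $\pi_2(\overline\chi_2)A_\gamma\pi_1(\overline\chi_1)$ up to the admissible error when $\chi_1\neq\chi_2$. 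But the lemma also requires the case $\chi_1=\chi_2\neq 1$ (which occurs for essentially every pair $V_1,V_2$, not only $V_1=V_2^*$), and there your argument is only a hope, not a proof: after your reduction you are left with bounding the $\chi$-isotypic block of the intertwiner $F$, whose a priori bound $Le^{t\gamma}$ is, as you say, useless, and nothing you write produces the missing decay. In particular, "iterating the Form (\ref{item=form1})/(\ref{item=form2}) reduction on $B_\gamma$" cannot work as stated: each comparison step costs an error of the same size $CL^2e^{(2t-s)\gamma}$ and comes with no contraction factor, so iteration accumulates errors without ever making the matching-character block small. Also note that your "second exact relation" at $\alpha=7\gamma/6$ ties $B_\gamma$ to the Weyl point, not $A_\gamma$ to $B_\gamma$; to involve $A_\gamma$ you must again invoke form (\ref{item=form1}) at $\alpha=7\gamma/6$, and then you are in effect producing one instance of the paper's approximate covariance relation.

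The missing mechanism, which is exactly what the paper's proof supplies, is the following. For \emph{every} $\alpha\in[\gamma,7\gamma/6]$, comparing both forms of Lemma \ref{lemma=embedding2} to the common base point $x_0$ via \eqref{eq=Hoelder_continuity_general} gives elements $y_\alpha,y_\alpha'\in\widetilde U$ (built from the $k_i,k_i'$) with $\|A_\gamma-\pi_2(y_\alpha)A_\gamma\pi_1(y_\alpha')\|\lesssim CL^2e^{(2t-s)\gamma}$; applying $\pi_2(\overline\chi_2)$ and $\pi_1(\overline\chi_1)$ converts this into the scalar inequality $|1-\chi_1(y_\alpha')\chi_2(y_\alpha)|\,\|\pi_2(\overline\chi_2)A_\gamma\pi_1(\overline\chi_1)\|\lesssim CL^2e^{(2t-s)\gamma}$, valid for all such $\alpha$ and uniformly in whether $\chi_1=\chi_2$ or not. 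One then uses the quantitative control $\|k_1-1\|,\|k_1'-1\|,\|k_2'-1\|\leq 2e^{-\gamma/4}$, the continuity of $\alpha\mapsto k_2(\gamma,\alpha)$, and $k_2(\gamma,7\gamma/6)=x_0$ to see that the phase $k_1\theta_\alpha'+k_2\theta_\alpha$ of $\chi_1(y_\alpha')\chi_2(y_\alpha)$ moves continuously from $0$ to roughly $|k_2|\pi/2$ (here $\chi_2\neq 1$, i.e.\ $k_2\neq 0$, is used), so by the intermediate value theorem some $\alpha$ makes $|1-\chi_1(y_\alpha')\chi_2(y_\alpha)|=1$, which immediately yields the bound for $\gamma$ large (small $\gamma$ being trivial). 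Using only the endpoint $\alpha=7\gamma/6$, as your sketch suggests, fails for instance when $\chi_2(x_0)=1$ (weight divisible by $4$): the scalar $|1-\chi_1(y')\chi_2(y)|$ can then be $O(e^{-\gamma/4})$, so the intermediate values of $\alpha$ are indispensable. You have all the ingredients on the table (the exact identity at $x_0$, the continuity of $k_2$), but the character-phase/intermediate-value step is the new ingredient your proposal is missing, and without it the $\chi_1=\chi_2$ case — hence the lemma — is not established.
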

Before proving this Lemma, let us explain how it implies \eqref{eq=howe_moore_spherical}.
\begin{proof}[End of proof of \eqref{eq=howe_moore_spherical}]
As explained above, we have to prove that \begin{equation}\label{eq=norme_Agamma_small}\| A_\gamma \|_{B(V_1,V_2^*)} \lesssim_{V_1,V_2} C L^2  e^{\gamma(2t-s)}.\end{equation}

Let $\chi_{1,2},\dots,\chi_{n,2}$ be the distinct nontrivial characters of $\widetilde U$ that appear in the decomposition of $V_2^*$ as direct sums of characters of $\widetilde U$. Denote similarly $\chi_{1,1},\dots,\chi_{m,1}$ the characters of $\widetilde U$ that appear in the decomposition of $V_1$ as direct sums of characters (we allow here the trivial character). Then $\sum_{k=1}^n \pi_2(\overline \chi_{k,2}) = \mathrm{Id}_{V_2} - \pi_2(\widetilde U)$ and $\sum_{j=1}^m \pi_1(\overline \chi_{j,1}) = \mathrm{Id}_{V_1}$. Summing the inequality from Lemma \ref{lemma=A_almost_U_invariant} therefore gives
\begin{multline}\label{eq=A_almost_U_invariant}\| A_\gamma - \pi_2(\widetilde U) A_\gamma\|_{B(V_1,V_2^*)} = \|\sum_{k=1}^m\sum_{j=1}^n \pi_2(\overline \chi_2) A_\gamma \pi_1(\overline \chi_1)\|_{B(V_1,V_2^*)}\\ \leq \sum_{k=1}^m\sum_{j=1}^n \| \pi_2(\overline \chi_2) A_\gamma \pi_1(\overline \chi_1)\|_{B(V_1,V_2^*)}   \lesssim_{V_1,V_2} CL^2  e^{\gamma(2t-s)}.\end{multline}

Consider the representation $\pi \circ \widetilde \theta$ of $G$ where
\[\widetilde \theta(g) = \begin{pmatrix}0&0&1\\0&1&0\\-1&0&0\end{pmatrix} \tr{g^{-1}} \begin{pmatrix}0&0&1\\0&1&0\\-1&0&0\end{pmatrix}^{-1}.\]
Since $\widetilde \theta$ maps $\widetilde U$ on $U$ and preserves $\diag(e^\gamma,1,e^{-\gamma})$, \eqref{eq=A_almost_U_invariant} applied to this representation gives 
\[\| A_\gamma - \pi_2(U) A_\gamma\|_{B(V_1,V_2^*)} \lesssim_{V_1,V_2} C L^2  e^{\gamma(2t-s)}.\]
If $\xi \in V_1$, these two inequalities become
\[ \|A_\gamma\xi - \pi_2(U) A_\gamma\xi\|_{V_2^*} + \|A_\gamma\xi - \pi_2(\widetilde U) A_\gamma\xi\|_{V_2^*} \lesssim_{V_1,V_2}  C L^2  e^{\gamma(2t-s)} \|\xi\|.\]
Lemma \ref{eq=a_almost_UandU'_invariant} implies \eqref{eq=norme_Agamma_small} and concludes the proof of \eqref{eq=howe_moore_spherical}.
\end{proof}

We can now proceed to the
\begin{proof}[Proof of Lemma \ref{lemma=A_almost_U_invariant}]
We can assume that $\chi_1$ ($\chi_2$) appears in the decomposition of $V_1$ ($V_2^*$) as sum of characters of $\widetilde U$. Indeed otherwise $\pi_1(\overline \chi_1)=0$ on $V_1$ (respectively $\pi_2(\overline \chi_2)=0$ on $V_2^*$). Hence there are only finitely many pairs of characters to consider. For each of them we prove \[\| \pi_2(\overline \chi_2) A_\gamma \pi_1(\overline \chi_1)\|_{B(V_1,V_2^*)} \lesssim_{V_1,V_2,\chi_1,\chi_2} CL^2  e^{\gamma(2t-s)},\] which will prove the Lemma.

For $\alpha \in [\gamma,7\gamma/6]$ take $\delta_i$ and $k_i,k_i'$ given by Lemma \ref{lemma=embedding2} (they depend on $\gamma,\alpha$). Then \eqref{eq=Hoelder_continuity_general} and the inequality $\delta_i \leq e^{-\gamma}$ imply
\[\| \pi_2(k_1) A_\gamma \pi_1(k_1') - \pi_2(k_2) q \pi\begin{pmatrix}e^{\frac{3\gamma}{4}}&0&0\\0&e^{\frac \gamma 4} &0\\0&0&e^{-\gamma}\end{pmatrix} \iota \pi_1(k_2')\| \lesssim_{V_1,V_2} CL^2 e^{\gamma(2t-s)}.\]
Denote $x_\alpha = k_2(\gamma,\alpha)^{-1}k_1(\gamma,\alpha)$ and $x_\alpha' = k_1'(\gamma,\alpha) k_2'(\gamma,\alpha)^{-1}$, so that
\[ \|  \pi_2(x_\gamma) A_\gamma \pi_1(x_\gamma') - \pi_2( x_\alpha) A_\gamma \pi_1(x_\alpha')\| \lesssim_{V_1,V_2} CL^2 e^{\gamma(2t-s)}.\]
If $y_\alpha = x_\gamma^{-1}x_\alpha$ and $y_\alpha'=x_\alpha' {x'_\gamma}^{-1}$ this becomes
\[ \| A_\gamma - \pi_2(y_\alpha) A_\gamma \pi_1(y_\alpha')\|\lesssim_{V_1,V_2} C L^2 e^{\gamma(2t-s)}.\]
Multiply on the left (right) by $\pi_2(\overline \chi_2)$ (resp. $\pi_1( \overline \chi_1)$). Using that $\pi_i(\overline \chi_i)\pi_i(y) = \chi_i(y)\pi_i(\overline \chi_i)$ for every $y \in \widetilde U$, we get
\[ |1-\chi_1(y_\alpha')\chi_2(y_\alpha)| \| \pi_2(\overline \chi_2) A_\gamma \pi_1(\overline \chi_1)\|_{B(V_1,V_2^*)} \lesssim_{V_1,V_2} C L^2  e^{\gamma(2t-s)}. \]
It remains to notice that for $\gamma$ large enough there exists $\alpha \in [\gamma,7\gamma/6]$ such that $|1-\chi_1(y_\alpha')\chi_2(y_\alpha)|=1$. This follows by Lemma \ref{lemma=embedding2} and the mean value theorem. Indeed, we can write $y_\alpha = \begin{pmatrix} \cos \theta_\alpha & -\sin \theta_\alpha & 0 \\ \sin \theta_\alpha & \cos \theta_\alpha & 0 \\ 0&0&1\end{pmatrix}$ and same for $y_\alpha'$ with $\theta_\alpha$ and $\theta_\alpha'$ depending continuously on $\alpha$. Since $y_\gamma= y'_\gamma=1$, we can also assume that $\theta_\gamma=\theta_\gamma'=0$. By Lemma \ref{lemma=embedding2} $|\theta'_\alpha| \lesssim_{V_1,V_2} e^{-\gamma/4}$, and $|\theta_{7\alpha/6}| \geq \pi/2 +O(e^{-\gamma/4})$. Let $k_1,k_2 \in \Z$ such that for $j=1,2$ the character $\chi_j$ is given by $\chi_j \begin{pmatrix} \cos \theta & -\sin \theta & 0 \\ \sin \theta & \cos \theta & 0 \\ 0&0&1\end{pmatrix}= e^{i k_j \theta}$. Then $\chi_1(y_\alpha')\chi_2(y_\alpha) = e^{i(k_1 \theta'_\alpha+k_2 \theta_\alpha)}$. For $\alpha = \gamma$, $k_1 \theta'_\alpha+k_2 \theta_\alpha=0$ whereas for $\alpha = 7 \gamma/6$, $|k_1 \theta'_\alpha+k_2 \theta_\alpha| \geq |k_2| \pi/2 + O(e^{-\gamma/4})$. $k_2$ being nonzero, for $\gamma$ large enough there is indeed $\alpha \in [\gamma,7\gamma/6]$ such that $|k_1 \theta'_\alpha+k_2 \theta_\alpha| = \pi/3$. This proves the Lemma.
\end{proof}

\begin{lemma}\label{eq=a_almost_UandU'_invariant} Let $\sigma:K \to \mathcal U(V)$ be a non trivial irreducible unitary representation of $K$. There is a constant $C_{V}$ such that for every $a \in V$,
\[ \|a\|_{V} \leq C_{V}(\|a - \sigma(U) a \|_V + \|a - \sigma(\widetilde U)a \|_V).\]
\end{lemma}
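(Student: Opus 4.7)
The plan is to reduce the inequality to a finite-dimensional injectivity statement. Since $K=\SO(3,\R)$ is compact and $\sigma$ is an irreducible unitary representation, $V$ is finite-dimensional, and by the standard averaging argument $\sigma(U)$ and $\sigma(\widetilde U)$ are the orthogonal projections of $V$ onto the subspaces $V^U$ of $U$-fixed vectors and $V^{\widetilde U}$ of $\widetilde U$-fixed vectors, respectively. Consider the linear map $T\colon V \to V \oplus V$ defined by $T(a) = (a-\sigma(U)a,\ a-\sigma(\widetilde U)a)$. Its kernel is exactly $V^U \cap V^{\widetilde U}$, so the claimed inequality (for some $C_V$) is equivalent, on the finite-dimensional space $V$, to the injectivity of $T$.

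The structural point is that $V^U \cap V^{\widetilde U}=\{0\}$. This intersection is fixed pointwise by both $U$ and $\widetilde U$, hence by the subgroup $H$ of $K$ they generate. At the Lie algebra level, $\mathfrak u$ is spanned by the generator of rotations fixing $e_1$ and $\widetilde{\mathfrak u}$ by the generator fixing $e_3$; their bracket is the generator fixing $e_2$, and these three vectors span $\mathfrak{so}(3)$. Thus the Lie algebra of the closure $\overline{H}$ contains $\mathfrak{so}(3)$, so $\overline{H}=K$ by connectedness of $\SO(3)$. Hence $V^U \cap V^{\widetilde U}$ is $K$-invariant, and by irreducibility plus the non-triviality of $\sigma$ it is $\{0\}$.

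Consequently $T$ is injective. Between finite-dimensional normed spaces this is equivalent to being bounded below, which yields a constant $C_V$ with $\|a\| \leq C_V(\|a-\sigma(U)a\|+\|a-\sigma(\widetilde U)a\|)$ for every $a\in V$. There is no real obstacle here: the statement is a soft compactness/irreducibility fact, and once the generation of $K$ by $U$ and $\widetilde U$ is recorded the estimate follows by dimension. (One pays the price that $C_V$ depends on $V$ and a priori grows with $\dim V$, but this is exactly what is allowed in the statement.)
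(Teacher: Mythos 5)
Your proof is correct and follows essentially the same route as the paper: establish that $V^U \cap V^{\widetilde U} = \{0\}$ because $U$ and $\widetilde U$ generate $K$ (for which you give a slightly more detailed Lie-algebra justification than the paper, which just asserts it), and then invoke finite-dimensional compactness to get the lower bound. The phrasing in terms of injectivity of $T\colon V\to V\oplus V$ versus the paper's compactness of the unit sphere is a cosmetic difference.
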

\begin{proof}
$\sigma(U)$ (resp. $\sigma(\widetilde U)$) is the orthogonal projection on the space of $U$ (resp. $\widetilde U$)-invariant vectors. Since $U$ and $\widetilde U$ generate $K$ as a subgroup, their intersection is the space of $K$-invariant vectors, ie $\{0\}$. The Lemma therefore follows from the compactness of the unit sphere in $V$.
\end{proof}
\begin{rem}By \cite[Lemma 3.10]{liao}, we can take $C_V \leq 3$.
\end{rem}

\bibliographystyle{plain}
 \bibliography{biblio}

\end{document}